\documentclass[12pt]{article}


\usepackage{times}

\usepackage{amsmath}


\usepackage[a4paper,text={128mm,185mm}, centering]{geometry}

\usepackage{changepage}

\usepackage{titlesec}

\titleformat{\section}[hang]%
{\bfseries\large}{\thesection.}{1ex}{}%

\titleformat{\subsection}[hang]%
{\bfseries}{\thesubsection}{1ex}{}%


\usepackage{amsthm}
\usepackage{fancyhdr}
\pagestyle{fancy}

\theoremstyle{plain}
\newtheorem{theorem}{Theorem}[section]
\newtheorem{lem}[theorem]{Lemma}
\newtheorem{prop}[theorem]{Proposition}
\newtheorem{cor}[theorem]{Corollary}
\newtheorem{definition}[theorem]{Definition} 

\theoremstyle{definition}

\newtheorem{prob}[theorem]{Problem}
\newtheorem{notation}[theorem]{Notation}
\newtheorem{obs}[theorem]{Observation}


\usepackage[matrix,arrow,curve,cmtip]{xy}
\usepackage{amssymb}
\usepackage{latexsym}
\usepackage{graphicx}
\usepackage{tikz}
\usetikzlibrary{matrix,arrows,decorations.pathmorphing}


\lhead{\sc\bfseries J. Orendain}
\rhead{\sc\bfseries The canonical double projection}

\title{\vskip 5pt  \bf  GLOBULARLY GENERATED DOUBLE CATEGORIES II: THE CANONICAL DOUBLE PROJECTION}
\author{\itshape\bfseries { Juan Orendain}}
\date{}

\begin{document}
	\maketitle
	

	\vskip 25pt
	\begin{adjustwidth}{0.5cm}{0.5cm}
		{\small
			
			{\bf Résumé.} Il s'agit du deuxième volet d'une série d'articles en deux parties portant sur les catégories doubles librement globulairement engendrées. Nous introduisons la construction canonique de la double projection. Celle-ci transporte l'information des catégories doubles librement globulairement engendrées aux catégories doubles  définies par le même ensemble de données globulaires et verticales. Nous utilisons cette double projection pour définir des extensions fonctorielles linéaires formelles compatibles de la forme standard de Haagerup et de l'opération de fusion de Connes aux morphismes entre facteurs  d'index éventuellement infini. Nous l'utilisons encore pour montrer que la construction de la double catégorie librement globulairement engendrée est adjointe à gauche à l' "horizontalisation décorée". Nous interprétons ainsi les catégories doubles librement globulairement engendrées comme des analogues formellement  décorés des catégories doubles de quintettes et comme des générateurs pour l'internalisation.

			{\bf Abstract.} This is the second installment of a two part series of papers studying free globularly generated double categories. We introduce the canonical double projection construction. The canonical double projection translates information from free globularly generated double categories to double categories defined through the same set of globular and vertical data. We use the canonical double projection to define compatible formal linear functorial extensions of the Haagerup standard form and the Connes fusion operation to possibly-infinite index morphisms between factors. We use the canonical double projection to prove that the free globularly generated double category construction is left adjoint to decorated horizontalization. We thus interpret free globularly generated double categories as formal decorated analogs of double categories of quintets and as generators for internalizations.\\
			{\bf Keywords.} Bicategory, double category, 2-group, double groupoid, von Neumann algebra\\
			{\bf Mathematics Subject Classification (2010).} 18D35, 46M05, 46M20, 46L10
		}
	\end{adjustwidth}
	
	
	\section{Introduction}

	\noindent  Globularly generated double categories were introduced by the author in \cite{yo1} in order to study ways of minimally lifting bicategories into double categories along possible categories of vertical arrows. Free globularly generated double categories were later introduced in \cite{yo2}. The free globularly generated double category construction minimally associates to every bicategory together with a possible category of vertical arrows, a double category fixing this set of initial data. Free globularly generated double categories are related to free products of groups and monoids, free double categories in the sense of \cite{DawsonPareFree} and to the Ehresmann double category of quintets construction \cite{Ehr3}, they define numerical invariants for both bicategories and double categories, and provide formal linear functorial extensions of operations in the representation theory of von Neumann algebras.

	In this paper we study the canonical projection double functor. The canonical double projection transfers information from free globularly generated double categories to other double categories defined through the same set of initial data. In the language of \cite{yo1,yo2} given a decorated bicategory $(\mathcal{B},\mathcal{B}^*)$, i.e. given a bicategory $\mathcal{B}$ together with a category $\mathcal{B}^*$ having the same set of objects as $\mathcal{B}$, and a globularly generated double category $C$ internalizing $(\mathcal{B},\mathcal{B}^*)$, i.e. having $\mathcal{B}^*$ as category of objects and $\mathcal{B}$ as horizontal bicategory, the canonical double projection associated to $C$ is a strict double functor
	
	\[\pi^C:Q_{(\mathcal{B},\mathcal{B}^*)}\to C\]

	\noindent from the free globularly generated double category $Q_{(\mathcal{B},\mathcal{B}^*)}$ associated to $(\mathcal{B},\mathcal{B}^*)$, to $C$, such that $\pi^C$ is surjective on squares and acts as the identity on objects, vertical morphisms, horizontal morphisms and 2-cells of $\mathcal{B}$. We summarize this by saying that the restriction of the decorated horizontalization pseudofunctor $H^*\pi^C$ of $\pi^C$, see \cite[Section 2.6]{yo1}, to $(\mathcal{B},\mathcal{B}^*)$, is the identity on $(\mathcal{B},\mathcal{B}^*)$, or equivalently by the equation:
	
	\[H^*\pi^C\restriction_{\mathcal{B}}=id_\mathcal{B}\]

	\noindent  In Theorem \ref{thmprojection1} we prove canonical double projections always exist and that are uniquely determined by the above properties. We interpret the properties defining canonical double projections by considering free globularly generated double categories and canonical double projections as generators and relations presentations of general globularly generated double categories. In Section 3 we exploit this to provide bounds for numerical invariants of double categories, to prove that every lift of a decorated 2-groupoid canonically contains a double groupoid, and to provide compatible formal linear functorial extensions of the Haagerup standard form and the Connes fusion operation extending the corresponding functors provided in \cite{Bartels1}. In Section 4 we extend the free globularly generated double category construction to a functor $Q:\mbox{\textbf{bCat}}^*\to \mbox{\textbf{dCat}}$ and in Theorem \ref{thmadjoint} we prove that $Q$ fits into a left adjoint pair $(Q,H^*)$ with the collection of canonical double projections as counit thus making free globularly generated double categories free objects with respect to $H^*$, see Corollary \ref{Qfree}. We regard this result as a fibered version of the classic result of \cite{Spencer} and \cite{BrownMosa} exchanging horizontalization $H$ with decorated horizontalization $H^*$ and the Ehresmann double category of quintets functor \textbf{Q} with $Q$. We provide a more detailed account of the contents and motivation for the main results of the paper.

	\

	\noindent \textit{Internalization}

	\

	\noindent Given a bicategory $\mathcal{B}$ we will say that a category $\mathcal{B}^*$ is a decoration for $\mathcal{B}$ if the collection of 0-cells of $\mathcal{B}$ and the collection of objects of $\mathcal{B}^*$ are equal. In that case we say that the pair $(\mathcal{B}^*,\mathcal{B})$ is a decorated bicategory. Given a double category $C$ the pair $(C_0,HC)$ formed by the category of objects and the horizontal bicategory of $C$ is a decorated bicategory. We will write $H^*C$ for this decorated bicategory. We will call $H^*C$ the decorated horizontalization of $C$. We are interested in the question of how generic the decorated horizontalization construction is, i.e. we are interested in how and when a given decorated bicategory con be presented as the decorated horizontalization of a double category. We study solutions to the following problem:

	\begin{prob}\label{prob}
		Let $(\mathcal{B}^*,\mathcal{B})$ be a decorated bicategory. Find double categories $C$ satisfying the equation $H^*C=(\mathcal{B}^*,\mathcal{B})$.
	\end{prob}

	\noindent We call any solution $C$ to the equation $H^*C=(\mathcal{B}^*,\mathcal{B})$ an internalization of $(\mathcal{B}^*,\mathcal{B})$. Problem \ref{prob} admits the following pictorial interpretation: Suppose we are given a collection of globular diagrams of the form:
	
	\begin{center}
		
		\begin{tikzpicture}
			\matrix(m)[matrix of math nodes, row sep=4em, column sep=4em,text height=1.5ex, text depth=0.25ex]
			{\bullet&\bullet\\};
			\path[->,font=\scriptsize,>=angle 90]
			(m-1-1) edge [bend right=45]node [below]{$\beta$} (m-1-2)
			edge [bend left=45] node [above]{$\alpha$} (m-1-2)
			edge [white]node[black][fill=white]{$\varphi$} (m-1-2)

			;
		\end{tikzpicture}
	\end{center}

	\noindent forming a bicategory, together with a collection of vertical arrows of the form:

	\begin{center}
		
		\begin{tikzpicture}
			\matrix(m)[matrix of math nodes, row sep=4em, column sep=4em,text height=1.5ex, text depth=0.25ex]
			{\bullet\\
				\bullet\\};
			\path[->,font=\scriptsize,>=angle 90]
			(m-1-1) edge node [left]{$f,g$, etc.} (m-2-1)

			;
		\end{tikzpicture}
	\end{center}

	\noindent forming a category, satisfying the condition that the collection of vertices of both sets of diagrams coincide. With this data we can form hollow squares of the form:

	\begin{center}
		
		\begin{tikzpicture}
			\matrix(m)[matrix of math nodes, row sep=4em, column sep=4em,text height=1.5ex, text depth=0.25ex]
			{\bullet&\bullet\\
				\bullet&\bullet\\};
			\path[->,font=\scriptsize,>=angle 90]
			(m-1-1) edge node [above]{$\alpha$} (m-1-2)
			edge node [left]{$f$} (m-2-1)
			(m-1-2) edge node [right]{$g$}(m-2-2)
			(m-2-1) edge node [below]{$\beta$}(m-2-2)
			
			(m-1-1) edge [white]node[black][fill=white]{} (m-2-2)

			;
		\end{tikzpicture}
	\end{center}

	\noindent formed by the edges of the diagrams we are provided with. Problem \ref{prob} asks about ways to fill these hollow squares \textit{equivariantly} with respect to the globular diagrams in our set of initial conditions. That is, Problem \ref{prob} asks for the existence of systems of solid squares of the form:

	\begin{center}
		
		\begin{tikzpicture}
			\matrix(m)[matrix of math nodes, row sep=4em, column sep=4em,text height=1.5ex, text depth=0.25ex]
			{\bullet&\bullet\\
				\bullet&\bullet\\};
			\path[->,font=\scriptsize,>=angle 90]
			(m-1-1) edge node [above]{$\alpha$} (m-1-2)
			edge node [left]{$f$} (m-2-1)
			(m-1-2) edge node [right]{$g$}(m-2-2)
			(m-2-1) edge node [below]{$\beta$}(m-2-2)
			
			(m-1-1) edge [white]node[black][fill=white]{$\psi$} (m-2-2)

			;
		\end{tikzpicture}
	\end{center}

	\noindent forming a double category such that every square as above admits an interpretation as a globular diagram together with extra structure provided only by our category of vertical arrows, that is such that the only solid squares of the form:

	\begin{center}
		
		\begin{tikzpicture}
			\matrix(m)[matrix of math nodes, row sep=4em, column sep=4em,text height=1.5ex, text depth=0.25ex]
			{\bullet&\bullet\\
				\bullet&\bullet\\};
			\path[->,font=\scriptsize,>=angle 90]
			(m-1-1) edge node [above]{$\alpha$} (m-1-2)
			edge [blue]node [black,left]{$id$} (m-2-1)
			(m-1-2) edge [blue]node [black,right]{$id$}(m-2-2)
			(m-2-1) edge node [below]{$\beta$}(m-2-2)
			
			(m-1-1) edge [white]node[black][fill=white]{$\varphi$} (m-2-2)

			;
		\end{tikzpicture}
	\end{center}

	\noindent are the globular diagrams provided as set of initial conditions. We regard the decorated horizontalization condition of Problem \ref{prob} a formalization of the equivariance condition on the above squares. 
	
	Constructions of this sort appear in different parts of the theory of double categories. Notably the double category of squares and the double category of commuting squares construction, the Ehresmann double category of quintets construction \cite{Ehr3}, the double category of adjoint pairs construction \cite{Palmquist}, and the double categories of spans and cospans constructions all follow the pattern described above. Double categories of squares have categories as globular and vertical sets of initial data, the double category of quintets has a given 2-category and the corresponding category of 1-cells as set of initial data, the double category of adjoints has a given 2-category together with adjoint pairs of 1-cells as set of initial data, and the double category of spans/cospans has the bicategory of spans/cospans of a category with pushouts/pullbacks and the arrows of this category as globular and vertical sets of initial data. In all cases solid squares are carefully chosen so as to encode different aspects of the globular theory. 
	
	Our main interest in Problem \ref{prob} comes from the theory of representations of von Neumann algebras. In \cite{Bartels1,Bartels2} a double category of semisimple von Neumann algebras, Hilbert bimodules and finite index bounded equivariant intertwiners was defined. See \cite{Bartels4} for applications to conformal field theory and the Stolz-Teichner program. The main goal of this construction is to serve as an intermediate step in the construction of an internal bicategory of coordinate free conformal nets. The main obstruction for the existence of an internal bicategory of general, i.e. not-necessarily-semisimple coordinate free conformal nets, is the existence of a compatible pair of tensor functors extending the Haagerup standard form construction \cite{Haagerup} and the Connes fusion operation to not-necessarily-finite index morphisms of semisimple von Neumann algebras. The existence of such tensor functors is equivalent to the existence of a tensor double category of (not-necessarily-semisimple) von Neumann algebras, Hilbert bimodules, and (not-necessarily-finite index) equivariant intertwiners extending the double category defined in \cite{Bartels1}. We achieve this in this paper in the case of linear double categories of factors.

	\

	\noindent \textit{Globularly generated double categories}
	
	\

	\noindent Globularly generated double categories were introduced in \cite{yo1} as minimal solutions to Problem \ref{prob}. A double category $C$ is globularly generated if $C$ is generated by its collection of globular squares. Pictorially a double category $C$ is globularly generated if every square of $C$ can be written as vertical and horizontal compositions of squares of the form:
	
	\begin{center}
		
		\begin{tikzpicture}
			\matrix(m)[matrix of math nodes, row sep=4em, column sep=4em,text height=1.5ex, text depth=0.25ex]
			{\bullet&\bullet&\bullet&\bullet\\
				\bullet&\bullet&\bullet&\bullet\\};
			\path[->,font=\scriptsize,>=angle 90]
			(m-1-1) edge node [above]{$\alpha$} (m-1-2)
			edge [blue]node [black,left]{$id$} (m-2-1)
			(m-1-2) edge [blue]node [black,right]{$id$}(m-2-2)
			(m-2-1) edge node [below]{$\beta$}(m-2-2)
			
			(m-1-1) edge [white]node[black][fill=white]{$\varphi$} (m-2-2)

			(m-1-3) edge [red]node [black,above]{$id$} (m-1-4)
			edge node [left]{$f$} (m-2-3)
			(m-1-4) edge node [right]{$f$}(m-2-4)
			(m-2-3) edge [red]node [black,below]{$id$}(m-2-4)
			
			(m-1-3) edge [white]node[black][fill=white]{$i_f$} (m-2-4)

			;
		\end{tikzpicture}
	\end{center}

	\noindent Given a double category $C$ we write $\gamma C$ for the sub-double category of $C$ generated by squares of the above form. We call $\gamma$ the globularly generated piece of $C$. $\gamma C$ is globularly generated, satisfies the equation 
	
	\[H^*C=H^*\gamma C\]
	
	\noindent and is contained in every sub-double category $D$ of $C$ satisfying the equation $H^*C=H^*D$. Moreover, a double category $C$ is globularly generated if and only if $C$ does not contain proper sub-double categories satisfying the above equation. Globularly generated double categories are thus minimal with respect to $H^*$.
	
	The comments in the previous paragraph admit the following categorical interpretation: Let \textbf{dCat}, \textbf{gCat} and \textbf{bCat}$^*$ denote the category of double categories and double functors, the full sub-category of \textbf{dCat} generated by globularly generated double categories and the category of decorated bicategories and decorated pseudofunctors respectively. Decorated horizontalization extends to a functor $H^*:\mbox{\textbf{dCat}}\to\mbox{\textbf{bCat}}^*$ and the globularly generated piece construction extends to a functor $\gamma:\mbox{\textbf{dCat}}\to\mbox{\textbf{gCat}}$. In \cite[Proposition 3.6]{yo1} it is proven that $\gamma$ is a coreflector of \textbf{gCat} in \textbf{dCat}. It is easily seen that this implies that $\gamma$ is a Grothendieck fibration. Moreover, $H^*$ is constant on $\gamma$-fibers. We present this through the following diagram:

	\begin{center}
		
		\begin{tikzpicture}
			\matrix(m)[matrix of math nodes, row sep=4em, column sep=4em,text height=1.5ex, text depth=0.25ex]
			{\mbox{\textbf{dCat}}&&\mbox{\textbf{bCat}}^*\\
				&\mbox{\textbf{gCat}}&\\};
			\path[->,font=\scriptsize,>=angle 90]
			(m-1-1) edge node [above]{$H^*$} (m-1-3)
			edge node [left]{$\gamma$} (m-2-2)
			(m-2-2) edge node [right]{$H^*\restriction_{\mbox{\textbf{gCat}}}$}(m-1-3)
			(m-2-2) edge [bend left=55] node [black,left]{$i$}(m-1-1)
			(m-2-2) edge [white,bend left=30] node [black, fill=white]{$\vdash$}(m-1-1)

			;
		\end{tikzpicture}
	\end{center}

	\noindent where $i$ denotes the inclusion of \textbf{gCat} in \textbf{dCat}. The above diagram breaks Problem \ref{prob} into the problem of studying bases of $\gamma$ and then understanding the double categories in each fiber. We follow this strategy and thus study globularly generated double categories, i.e. bases with respect to $\gamma$.

	\
	
	\noindent \textit{The vertical filtration}
	
	\

	\noindent Globularly generated double categories admit a helpful combinatorial description provided in the form of a filtration of their categories of squares. Given a globularly generated double category $C$ we write $V^1_C$ for the category formed by vertical compositions of squares of the form:

	\begin{center}
		
		\begin{tikzpicture}
			\matrix(m)[matrix of math nodes, row sep=4em, column sep=4em,text height=1.5ex, text depth=0.25ex]
			{\bullet&\bullet&\bullet&\bullet\\
				\bullet&\bullet&\bullet&\bullet\\};
			\path[->,font=\scriptsize,>=angle 90]
			(m-1-1) edge node [above]{$\alpha$} (m-1-2)
			edge [blue]node [black,left]{$id$} (m-2-1)
			(m-1-2) edge [blue]node [black,right]{$id$}(m-2-2)
			(m-2-1) edge node [below]{$\beta$}(m-2-2)
			
			(m-1-1) edge [white]node[black][fill=white]{$\varphi$} (m-2-2)

			(m-1-3) edge [red]node [black,above]{$id$} (m-1-4)
			edge node [left]{$f$} (m-2-3)
			(m-1-4) edge node [right]{$f$}(m-2-4)
			(m-2-3) edge [red]node [black,below]{$id$}(m-2-4)
			
			(m-1-3) edge [white]node[black][fill=white]{$i_f$} (m-2-4)

			;
		\end{tikzpicture}
	\end{center}
	
	\noindent and we denote by $H^1_C$ the (possibly weak) category formed by horizontal compositions of squares of this form. Assuming we have defined $V^k_C$ and $H^k_C$ through vertical and horizontal compositions respectively, we make $V^{k+1}_C$ to be the category generated by squares in $H^k_C$ and $H^{k+1}_C$ the (possibly weak) category generated by squares in $V^{k+1}_C$. The category of squares $C_1$ of $C$ satisfies the equation $C_1=\varinjlim V^k_C$. We define the length $\ell C\in\mathbb{N}\cup\left\{\infty\right\}$ of a double category $C$ as the minimal $k$ such that the equation $\gamma C_1=V^k_{\gamma C}$ holds. Intuitively the vertical length of a double category $C$ measures the complexity of expressions of squares in $C$ by globular and horizontal identity squares.
	
	We further explain the vertical filtration construction through the following pictorial representation: We regard the globular and horizontal identity squares of a double category $C$ as the simplest possible squares of $C$, i.e. we regard these squares as having 'complexity' 0. We thus represent globular and horizontal identity squares diagramatically as squares marked by 0, i.e. as:

	\begin{center}

		\tikzset{every picture/.style={line width=0.75pt}} 
		
		\begin{tikzpicture}[x=0.75pt,y=0.75pt,yscale=-1,xscale=1]
			
			\draw   (292.77,151.6) -- (369.82,151.6) -- (369.82,228.65) -- (292.77,228.65) -- cycle ;
			
			\draw (324.95,185.52) node [anchor=north west][inner sep=0.75pt]  [font=\scriptsize,xscale=0.9,yscale=0.9]  {$0$};

		\end{tikzpicture}

	\end{center}

	\noindent The collection of such squares is what in Section 2 we denote by $\mathbb{G}$. Observe that the collection of 0-marked squares is closed under horizontal composition. Squares in $V^1_C$ are those squares in $C$ admitting a subdivision as vertical composition of 0-marked squares. Diagrammatically every square in $V^1_C$ admits a decomposition as:
	
	\begin{center}

		\tikzset{every picture/.style={line width=0.75pt}} 
		
		\begin{tikzpicture}[x=0.75pt,y=0.75pt,yscale=-1,xscale=1]
			
			\draw   (290.34,171.28) -- (369.16,171.28) -- (369.16,250.09) -- (290.34,250.09) -- cycle ;
			\draw    (290.41,190.71) -- (369.3,190.41) ;
			\draw    (290.64,210.71) -- (368.86,210.66) ;
			\draw    (290.19,230.49) -- (369.3,230.41) ;
			
			\draw (324.45,177.54) node [anchor=north west][inner sep=0.75pt]  [font=\scriptsize,xscale=0.9,yscale=0.9]  {$0$};
			\draw (324.34,195.76) node [anchor=north west][inner sep=0.75pt]  [font=\scriptsize,xscale=0.9,yscale=0.9]  {$0$};
			\draw (323.9,236.43) node [anchor=north west][inner sep=0.75pt]  [font=\scriptsize,xscale=0.9,yscale=0.9]  {$0$};
			\draw (333.41,211.46) node [anchor=north west][inner sep=0.75pt]  [font=\scriptsize,rotate=-90.3,xscale=0.9,yscale=0.9]  {$\cdots $};

		\end{tikzpicture}

	\end{center}
	
	\noindent where we draw internal 0-marked squares as rectangles for convenience. If a square as above is not globular or a horizontal identity, i.e. is not 0-marked, we mark it with 1. We represent 1-marked squares pictorially as:
	
	\begin{center}

		\tikzset{every picture/.style={line width=0.75pt}} 
		
		\begin{tikzpicture}[x=0.75pt,y=0.75pt,yscale=-1,xscale=1]
			
			\draw   (292.77,169.6) -- (369.82,169.6) -- (369.82,246.65) -- (292.77,246.65) -- cycle ;
			
			\draw (324.95,203.52) node [anchor=north west][inner sep=0.75pt]  [font=\scriptsize,xscale=0.9,yscale=0.9]  {$1$};

		\end{tikzpicture}

	\end{center}
	
	\noindent Squares in $H^1_C$ are thus those squares in $C$ that admit a subdivision as horizontal composition of squares marked with $i\leq 1$. Given two horizontally composable squares $\varphi,\psi$ in $V^1_C$ we might be able to find compatible vertical subdivisions of $\varphi$ and $\psi$ in 0-marked squares, i.e. we might be able to represent the horizontal composition of $\varphi$ and $\psi$ as:
	
	\begin{center}

		\tikzset{every picture/.style={line width=0.75pt}} 
		
		\begin{tikzpicture}[x=0.75pt,y=0.75pt,yscale=-1,xscale=1]
			
			\draw   (251.59,141) -- (330.41,141) -- (330.41,219.82) -- (251.59,219.82) -- cycle ;
			\draw    (251.66,160.43) -- (330.55,160.14) ;
			\draw    (251.89,180.43) -- (330.11,180.38) ;
			\draw    (251.44,200.21) -- (330.55,200.14) ;
			\draw   (331.09,141) -- (409.91,141) -- (409.91,219.82) -- (331.09,219.82) -- cycle ;
			\draw    (331.16,160.43) -- (410.05,160.14) ;
			\draw    (331.39,180.43) -- (409.61,180.38) ;
			\draw    (330.94,200.21) -- (410.05,200.14) ;
			
			\draw (285.7,147.26) node [anchor=north west][inner sep=0.75pt]  [font=\scriptsize,xscale=0.9,yscale=0.9]  {$0$};
			\draw (285.59,165.48) node [anchor=north west][inner sep=0.75pt]  [font=\scriptsize,xscale=0.9,yscale=0.9]  {$0$};
			\draw (285.15,206.15) node [anchor=north west][inner sep=0.75pt]  [font=\scriptsize,xscale=0.9,yscale=0.9]  {$0$};
			\draw (294.66,181.18) node [anchor=north west][inner sep=0.75pt]  [font=\scriptsize,rotate=-90.3,xscale=0.9,yscale=0.9]  {$\cdots $};
			\draw (365.2,147.26) node [anchor=north west][inner sep=0.75pt]  [font=\scriptsize,xscale=0.9,yscale=0.9]  {$0$};
			\draw (365.09,165.48) node [anchor=north west][inner sep=0.75pt]  [font=\scriptsize,xscale=0.9,yscale=0.9]  {$0$};
			\draw (364.65,206.15) node [anchor=north west][inner sep=0.75pt]  [font=\scriptsize,xscale=0.9,yscale=0.9]  {$0$};
			\draw (374.16,181.18) node [anchor=north west][inner sep=0.75pt]  [font=\scriptsize,rotate=-90.3,xscale=0.9,yscale=0.9]  {$\cdots $};

		\end{tikzpicture}

	\end{center}
	
	\noindent where the internal 0-marked squares of the left and right outer squares match and can be composed horizontally. In that case we can use the exchange identity to re-arrange the above horizontal composition into a vertical subdivision of 0-marked squares. Example \cite[Example 4.1]{yo2} shows that this is not always the case and that there might exist horizontally composable squares $\varphi,\psi$ such that any two vertical subdivisions into 0-squares look like:
	
	\begin{center}

		\tikzset{every picture/.style={line width=0.75pt}} 
		
		\begin{tikzpicture}[x=0.75pt,y=0.75pt,yscale=-1,xscale=1]
			
			\draw   (271.59,161) -- (350.41,161) -- (350.41,239.82) -- (271.59,239.82) -- cycle ;
			\draw    (271.66,180.43) -- (350.55,180.14) ;
			\draw    (271.89,197.58) -- (350.11,197.53) ;
			\draw    (271.44,220.21) -- (350.55,220.14) ;
			\draw   (351.09,161) -- (429.91,161) -- (429.91,239.82) -- (351.09,239.82) -- cycle ;
			\draw    (351.16,190.15) -- (430.27,190.7) ;
			\draw    (349.8,211.35) -- (429.85,211.23) ;
			
			\draw (305.7,167.26) node [anchor=north west][inner sep=0.75pt]  [font=\scriptsize,xscale=0.9,yscale=0.9]  {$0$};
			\draw (305.59,183.48) node [anchor=north west][inner sep=0.75pt]  [font=\scriptsize,xscale=0.9,yscale=0.9]  {$0$};
			\draw (305.15,226.15) node [anchor=north west][inner sep=0.75pt]  [font=\scriptsize,xscale=0.9,yscale=0.9]  {$0$};
			\draw (314.66,201.18) node [anchor=north west][inner sep=0.75pt]  [font=\scriptsize,rotate=-90.3,xscale=0.9,yscale=0.9]  {$\cdots $};
			\draw (384.63,171.83) node [anchor=north west][inner sep=0.75pt]  [font=\scriptsize,xscale=0.9,yscale=0.9]  {$0$};
			\draw (385.22,222.15) node [anchor=north west][inner sep=0.75pt]  [font=\scriptsize,xscale=0.9,yscale=0.9]  {$0$};
			\draw (394.44,192.61) node [anchor=north west][inner sep=0.75pt]  [font=\scriptsize,rotate=-90.3,xscale=0.9,yscale=0.9]  {$\cdots $};

		\end{tikzpicture}

	\end{center}
	
	\noindent i.e. the internal 0-squares cannot be arranged to match horizontally. Such horizontal compositions are not 1-marked. We represent squares in $H^1_C$ as above, i.e. squares in $H^1_C\setminus V^1_C$ as squares marked with 1+1/2, i.e. as:

	\begin{center}

		\tikzset{every picture/.style={line width=0.75pt}} 
		
		\begin{tikzpicture}[x=0.75pt,y=0.75pt,yscale=-1,xscale=1]
			
			\draw   (292.77,179.6) -- (369.82,179.6) -- (369.82,256.65) -- (292.77,256.65) -- cycle ;
			
			\draw (310.95,214.72) node [anchor=north west][inner sep=0.75pt]  [font=\scriptsize,xscale=0.9,yscale=0.9]  {$1+1/2$};

		\end{tikzpicture}

	\end{center}
	
	\noindent $V^2_C$ is thus the category of squares admitting a vertical subdivision into squares marked with $\leq 1+1/2$. Inductively, given $k\geq 1$, $V^k_C$ is the category of squares admitting vertical subdivisions as:
	
	\begin{center}

		\tikzset{every picture/.style={line width=0.75pt}} 
		
		\begin{tikzpicture}[x=0.75pt,y=0.75pt,yscale=-1,xscale=1]
			
			\draw   (292.34,180.25) -- (371.16,180.25) -- (371.16,259.07) -- (292.34,259.07) -- cycle ;
			\draw    (292.41,199.68) -- (371.3,199.39) ;
			\draw    (292.64,219.68) -- (370.86,219.63) ;
			\draw    (292.19,239.46) -- (371.3,239.39) ;
			
			\draw (326.45,186.51) node [anchor=north west][inner sep=0.75pt]  [font=\scriptsize,xscale=0.9,yscale=0.9]  {$i_{1}$};
			\draw (326.34,204.73) node [anchor=north west][inner sep=0.75pt]  [font=\scriptsize,xscale=0.9,yscale=0.9]  {$i_{2}$};
			\draw (325.9,245.4) node [anchor=north west][inner sep=0.75pt]  [font=\scriptsize,xscale=0.9,yscale=0.9]  {$i_{s}$};
			\draw (335.41,220.43) node [anchor=north west][inner sep=0.75pt]  [font=\scriptsize,rotate=-90.3,xscale=0.9,yscale=0.9]  {$\cdots $};

		\end{tikzpicture}

	\end{center}

	\noindent where the $i_j$'s are all $\leq k-1/2$. Squares marked with $k$ are squares in $V^k_C$ not marked with $i<k$. $H^{k+1}_C$ is the (possibly weak) category of squares admitting a horizontal subdivision as:

	\begin{center}

		\tikzset{every picture/.style={line width=0.75pt}} 
		
		\begin{tikzpicture}[x=0.75pt,y=0.75pt,yscale=-1,xscale=1]
			
			\draw   (369.66,181.75) -- (369.66,260.57) -- (290.84,260.57) -- (290.84,181.75) -- cycle ;
			\draw    (350.22,181.82) -- (350.52,260.71) ;
			\draw    (330.22,182.05) -- (330.27,260.27) ;
			\draw    (310.45,181.6) -- (310.52,260.71) ;
			
			\draw (295.45,214.51) node [anchor=north west][inner sep=0.75pt]  [font=\scriptsize,xscale=0.9,yscale=0.9]  {$i_{1}$};
			\draw (314.34,214.23) node [anchor=north west][inner sep=0.75pt]  [font=\scriptsize,xscale=0.9,yscale=0.9]  {$i_{2}$};
			\draw (355.4,216.4) node [anchor=north west][inner sep=0.75pt]  [font=\scriptsize,xscale=0.9,yscale=0.9]  {$i_{s}$};
			\draw (349.23,226.06) node [anchor=north west][inner sep=0.75pt]  [font=\scriptsize,rotate=-180.3,xscale=0.9,yscale=0.9]  {$\cdots $};

		\end{tikzpicture}

	\end{center}
	
	\noindent where the $i_j$'s are all $\leq k$. Squares marked with $k+1/2$ are those squares in $H^k_C$ such that no subdivision as above can be reduced as a vertical subdivision as $i$-squares with $i\leq k-1/2$. In \cite{yo2} it shown that there exist globularly generated double categories such that squares marked with $k+1/2$ exist for every $k\geq 0$. The formula $C_1=\varinjlim V^k_C$ thus means that in a globularly generated double category $C$ every square admits a $\mathbb{N}+1/2\mathbb{N}$-marking as above. The length of a square $\varphi$ marked by $x\in \mathbb{N}+1/2\mathbb{N}$ is $\lceil x\rceil$ and the length $\ell C$ is the maximum of legths of squares in $C$. The above pictorial representation is only meant to serve as intuition for the vertical filtration construction and we will not use it for the remainder of the paper.
	
	\
	
	\noindent\textit{Free globularly generated double categories}

	\

	\noindent The free globularly generated double category construction associates to every decorated bicategory $(\mathcal{B}^*,\mathcal{B})$ a globularly generated double category $Q_{(\mathcal{B}^*,\mathcal{B})}$. The double category $Q_{(\mathcal{B}^*,\mathcal{B})}$ lifts the bicategory structure of $\mathcal{B}$ in the sense that the category of objects $Q_{(\mathcal{B}^*,\mathcal{B})_0}$ of $Q_{(\mathcal{B}^*,\mathcal{B})}$ is equal to $\mathcal{B}^*$, the horizontal morphisms of $Q_{(\mathcal{B}^*,\mathcal{B})}$ are the 1-cells of $\mathcal{B}$ and $\mathcal{B}$ is a sub-bicategory of $HQ_{(\mathcal{B}^*,\mathcal{B})_0}$. The equation 
	
	\[H^*Q_{(\mathcal{B}^*,\mathcal{B})}=(\mathcal{B}^*,\mathcal{B})\]

	\noindent holds only in special cases, e.g. $\mathcal{B}^*$ is reduced or $\mathcal{B}^*$ is the category of factors and unital $\ast$-morphisms, but the inclusion
	
	\[(\mathcal{B}^*,\mathcal{B})\subseteq H^*Q_{(\mathcal{B}^*,\mathcal{B})}\]
	
	\noindent always holds. Free globularly generated double categories thus not always provide solutions to Problem \ref{prob}. An example where the above inclusion is proper is provided in \cite[Example 3.1]{yo2}, where it is proven that in the case in which $\mathcal{B}^*$ is the delooping groupoid $\Omega \mathbb{Z}_2$ of $\mathbb{Z}_2$ and $\mathcal{B}$ is the double delooping 2-group $2\Omega \mathbb{Z}_2$ of $\mathbb{Z}_2$, i.e. when $\Omega\mathbb{Z}_2$ is the groupoid with a single object having $\mathbb{Z}_2$ as group of automorphisms and $2\Omega\mathbb{Z}_2$ is the 2-group having a single object with endomorphism category $\Omega\mathbb{Z}_2$, the horizontal bicategory $HQ_{(2\Omega\mathbb{Z}_2,\Omega\mathbb{Z}_2)}$ associated to the decorated bicategory $(2\Omega\mathbb{Z}_2,\Omega\mathbb{Z}_2)$ is equal to $\Omega(\mathbb{Z}_2\ast\mathbb{Z}_2)$. The inclusion $\mathcal{(B,\mathcal{B}^*)}\subseteq H^*Q_\mathcal{(B,\mathcal{B}^*)}$ is in this case obviously proper. We call decorated bicategories for which their free globularly generated double category provides solutions to Problem \ref{prob} saturated. Every decorated bicategory $(\mathcal{B}^*,\mathcal{B})$ has a saturated decorated bicategory associated to it with the same free globularly generated double category as $(\mathcal{B}^*,\mathcal{B})$. Free globularly generated double categories are related to free products and free double categories in the sense of \cite{DawsonPareFree}. Moreover, free globularly generated double categories provide examples of double categories of arbitrarily large and infinite length and provide formal equivariant functorial extensions of the Haagerup standard form and the Connes fusion operation in the theory of representation of von Neumann algebras.

	\

	\noindent \textit{The canonical double projection}
	
	\

	\noindent The canonical double projection construction relates free globularly generated double categories to general solutions to Problem \ref{prob}. Precisely, given a decorated bicategory $(\mathcal{B}^*,\mathcal{B})$ and a double category $C$ satisfying the equation $H^*C=(\mathcal{B}^*,\mathcal{B})$ the double canonical projection associated to $C$ is a strict double functor $\pi^C:Q_{(\mathcal{B}^*,\mathcal{B})}\to \gamma C$ satisfying the equation:

	\[\pi^C\restriction_{(\mathcal{B}^*,\mathcal{B})}=id_{(\mathcal{B}^*,\mathcal{B})}\]

	\noindent and such that $\pi^C$ is surjective on squares. Moreover, $\pi^C$ is unique with respect to this property. We interpret the existence of such double functors as the fact that every globularly generated solution to Problem \ref{prob} for a decorated bicategory $(\mathcal{B}^*,\mathcal{B})$ can be canonically expressed as a double quotient of $Q_{(\mathcal{B}^*,\mathcal{B})}$. We apply the canonical double projection to length, double groupoids, double deloopings of groups decorated by groups, and to double categories of von Neumann algebras. All applications of the canonical double projections follow the slogan: \textit{Saying something about the free globularly generated double category associated to a decorated bicategory translates to saying something about all its globularly generated internalizations}, the intuition of which clearly follows from the properties defining the canonical double projection.

	The canonical double projection construction provides free globularly generated double categories with the structure of universal bases with respect to the fibration $\gamma$ as follows: We extend the free globularly generated double category construction to a functor $Q:\mbox{\textbf{bCat}}^*\to\mbox{\textbf{gCat}}$ using methods analogous to those used in the construction of the canonical double projection. We prove that the set of canonical double projections $\pi^\bullet=\left\{\pi^C:C\in\mbox{\textbf{gCat}}\right\}$ provides a counit to a left adjunction pair $(Q,H^*)$. We thus obtain a diagram as:
	
	\begin{center}
		
		\begin{tikzpicture}
			\matrix(m)[matrix of math nodes, row sep=4em, column sep=4em,text height=1.5ex, text depth=0.25ex]
			{\mbox{\textbf{dCat}}&&\mbox{\textbf{bCat}}^*\\
				&\mbox{\textbf{gCat}}&\\};
			\path[->,font=\scriptsize,>=angle 90]
			(m-1-1) edge node [above]{$H^*$} (m-1-3)
			edge node [left]{$\gamma$} (m-2-2)
			(m-2-2) edge node [right]{$H^*$}(m-1-3)
			
			(m-2-2) edge [bend left=65] node [black,left]{$i$}(m-1-1)
			(m-2-2) edge [white,bend left=30] node [black, fill=white]{$\dashv$}(m-1-1)

			(m-1-3) edge [bend left=65] node [black,right]{$Q$}(m-2-2)
			(m-1-3) edge [white, bend left=30] node [black, fill=white]{$\vdash$}(m-2-2)

			;
		\end{tikzpicture}
		
	\end{center}

	\noindent completing the similar diagram above. Further, we prove that the restriction $H^*\restriction_{\mbox{\textbf{gCat}}}$ is faithful. This provides \textbf{gCat} with the structure of a concrete category over \textbf{bCat}$^*$ and provides $Q$ with the structure of a free contruction with respect to $H^*$. 
	
	We consider the above statement as a generalization of a classic result in nonabelian algebraic topology. In \cite{BrownSpencer} the concept of edge symmetric double category with connection is introduced. In \cite{Spencer} and later in \cite{BrownMosa} it is proven that the category \textbf{dCat}$^!$ of edge symmetric double categories with connection is equivalent to the category \textbf{2Cat} of 2-categories, with equivalences provided by the horizontalization functor $H$ and the functor associating to every 2-category $B$ its Ehresmann category of quintets $\mbox{\textbf{Q}}B$. Pictorially $H$ and \textbf{Q} fit into a diagram of the form:

	\begin{center}
		
		\begin{tikzpicture}
			\matrix(m)[matrix of math nodes, row sep=4em, column sep=6em,text height=1.5ex, text depth=0.25ex]
			{\mbox{\textbf{dCat}}^!&\mbox{\textbf{2Cat}}\\};
			\path[->,font=\scriptsize,>=angle 90]
			(m-1-1) edge [ bend right=55]node [black,below]{$H$} (m-1-2)
			
			(m-1-2) edge [bend right=55] node [black,above]{\textbf{Q}} (m-1-1)
			(m-1-1) edge [white]node[black,fill=white]{$\cong$} (m-1-2)

			;
		\end{tikzpicture}
	\end{center}

	\noindent The above diagram can be considered as a statement on fillings of hollow squares. When considering problems of filling squares through data provided by general decorated bicategories and not just by data provided by 2-categories decorated by 1-cells, one wishes to obtain a similar statement. We regard the diagram involving $H^*$ and $Q$ above as a decorated bicategory version of the diagram involving \textbf{Q} and $H$ above, fibered by $\gamma$.

	\

	\noindent \textit{Notational conventions}

	\

	\noindent We will follow the notational conventions appearing in \cite{yo1,yo2}. We refer the reader to Section 3 of \cite{yo2} for the details of the notational conventions used in the construction of the free globularly generated double category. We will heavily use the notation and results presented there. In the introduction we have written decorated bicategories in the form $(\mathcal{B}^*,\mathcal{B})$ with $\mathcal{B}^*$ denoting the decoration and $\mathcal{B}$ denoting the underlying bicategory of $(\mathcal{B}^*,\mathcal{B})$ respectively. In what follows we will suppress $\mathcal{B}^*$ from this notation and we will denote $\mathcal{B}$ for a decorated bicategory $(\mathcal{B}^*,\mathcal{B})$.

	\

	\noindent \textit{Contents}

	\

	\noindent In Section \ref{s3} we introduce the canonical double projection construction. We prove that the canonical double projection always exists and that it is uniquely determined by the conditions mentioned in the introduction. The construction of the canonical double projection follows a strategy similar to that of the free globularly generated double category construction. In Section \ref{sApps} we study applications of canonical double projections. We provide upper bounds for lengths of internalizations, we prove that every globularly generated internalization of a decorated 2-groupoid is a double groupoid and we provide 
	compatible formal linear extensions of the Bartels-Douglas-H\'enriques Haagerup standard form and Connes fusion functors to the category of factors and possibly-infinite index morphisms. In Section \ref{sFreeDoubleFunctors} we extend the free globularly generated double category construction to decorated pseudofunctors thus extending the free globularly generated double category construction to a functor. In Section \ref{sFGGCatasafreeobject} we prove that the pair formed by the free globularly generated double category functor and the decorated horizontalization functor forms a left adjoint pair. Moreover, we prove that the restriction of the decorated horizontalization functor to globularly generated double categories is faithful. We use this to interpret globularly generated double categories as a concrete category over decorated bicategories and the free globularly generated double category construction as a free object.

	\section{The canonical double projection}\label{s3}

	\noindent In this section we present the canonical double projection construction. Given a decorated bicategory $\mathcal{B}$ the canonical double projection construction associates to every double category $C$ satisfying the equation $H^*C=\mathcal{B}$ a unique strict double functor $\pi^C:Q_\mathcal{B}\to \gamma C$ such that $\pi^C$ acts as the identity on $\mathcal{B}$ and such that $\pi^C$ is surjective on squares. The following is the main theorem of this section.

	\begin{theorem}\label{thmprojection1}
		Let $\mathcal{B}$ be a decorated bicategory. Let $C$ be a globularly generated double category such that $H^*C=\mathcal{B}$. In that case there exists a unique strict double functor $\pi^C:Q_\mathcal{B}\to C$ such that the equation
		
		\[H^*\pi^C\restriction_\mathcal{B}=id_\mathcal{B}\]

		\noindent holds, and such that $\pi^C$ is surjective on squares.
	\end{theorem}


	\noindent Given a double category $C$ satisfying the conditions above for a decorated bicategory $\mathcal{B}$ we will call the double functor $\pi^C$ provided in Theorem \ref{thmprojection1} the canonical double projection associated to $C$. We divide the construction of $\pi^C$ in several steps. We begin by summarizing the free globularly generated double category construction. We do this in order to set notational conventions used throughout the section and the rest of the paper. The exact details of this construction and the corresponding notational conventions can be found in \cite[Section 2]{yo2}.  
	
	\

	\noindent \textit{The free globularly generated double category: Quick summary}

	\

	\noindent Given functions $s,t:X\to Y$ between sets $X$ and $Y$, which we interpret as source and target functions for elements of $X$, we write $X_{s,t}$ for the set of evaluations of finite compatible words of elements of $X$ with respect to different parentheses patterns. Geometrically $X_{s,t}$ is the set of compatible evaluations, with elements of $X$, of the vertices of all Stasheff associahedra \cite{Stasheff1,Stasheff2}. The functions $s,t$ extend to functions $\tilde{s},\tilde{t}:X_{s,t}\to Y$ and concatenation provides a composition $\ast_{s,t}:X_{s,t}\times_Y X_{s,t}\to X_{s,t}$. Given another pair of functions $s',t':X'\to Y'$ as above and a pair of functions $\psi:X\to X',\varphi:Y\to Y'$ intertwining $s,s'$ and $t,t'$, evaluation on $\psi$ provides a function $\mu_{\psi,\varphi}:X_{s,t}\to X'_{s'.t'}$ intertwining $\tilde{s},\tilde{s}'$, $\tilde{t},\tilde{t}'$ and $\ast_{s,t}$, $\ast_{s',t'}$. We apply these conventions to the situation we are interested in as follows.
	
	Let $\mathcal{B}$ be a decorated bicategory. We formally associate to every 2-cell $\varphi$ in $\mathcal{B}$ a diagram of the form:

	\begin{center}
		
		\begin{tikzpicture}
			\matrix(m)[matrix of math nodes, row sep=4em, column sep=4em,text height=1.5ex, text depth=0.25ex]
			{\bullet&\bullet\\
				\bullet&\bullet\\};
			\path[->,font=\scriptsize,>=angle 90]
			(m-1-1) edge []node {} (m-1-2)
			edge [blue]node{} (m-2-1)
			(m-2-1) edge []node{} (m-2-2)			
			(m-1-2) edge [blue]node{} (m-2-2)
			(m-1-1) edge [white] node [black][fill=white]{$\varphi$}(m-2-2) 
			;
		\end{tikzpicture}
	\end{center}

	\noindent and we associate to every morphism $f$ in $\mathcal{B}^*$ a  square of the form:

	\begin{center}
		
		\begin{tikzpicture}
			\matrix(m)[matrix of math nodes, row sep=4em, column sep=4em,text height=1.5ex, text depth=0.25ex]
			{\bullet&\bullet\\
				\bullet&\bullet\\};
			\path[->,font=\scriptsize,>=angle 90]
			(m-1-1) edge [red]node {} (m-1-2)
			edge node[left]{$f$} (m-2-1)
			(m-2-1) edge [red]node{} (m-2-2)			
			(m-1-2) edge node[right]{$f$} (m-2-2)
			(m-1-1) edge [white] node [black][fill=white]{$i_f$}(m-2-2) 
			;
		\end{tikzpicture}
	\end{center}

	\noindent where the blue and red arrows above always denote identity arrows in $\mathcal{B}^*$ and $\mathcal{B}$ respectively. We write $\mathbb{G}$ for the collection of the above diagrams. The free globularly generated double category $Q_\mathcal{B}$ is the double category freely generated by $\mathbb{G}$. We explain this in more detail. Going around the edges of the above squares there are obvious vertical domain and codomain functions $d_0,c_0:\mathbb{G}\to \mbox{Hom}_{\mathcal{B}_1}$ and obvious horizontal domain and codomain functions $s_0,t_0:\mathbb{G}\to \mbox{Hom}_{\mathcal{B}^*}$. We write $E_1$ for $\mathcal{B}_{1_{s_0,t_0}}$. The functions $d_0,c_0$ extend to functions on $E_1$. We write $F_1$ for the free category generated by $E_1$ with respect to these extensions. The functions $s_0,t_0$ extend to functors on $F_1$. We extend this construction inductively and obtain increasing sequences $E_k$ and $F_k$ equipped with corresponding functions $d_k,c_k$ and functors $s_{k+1},t_{k+1}$ satisfying certain compatibility conditions, see \cite[Lemma 2.5]{yo2}. We consider limits in \textbf{Set} and \textbf{Cat} and obtain a category $F_\infty$ together with functions $d_\infty,c_\infty$ and functors $s_\infty,t_\infty$ extending $d_0,c_0$ and $s_0,t_0$ respectively. The category $F_\infty$ does not capture the information contained in $\mathcal{B}^*$. We thus consider an equivalence relation $R_\infty$ on the set of morphisms $E_\infty$ of $F_\infty$ implementing this information, see \cite[Definition 2.10]{yo2}. We write $V_\infty$ for the quotient $F_\infty/R_\infty$. The structure used to define $F_\infty$ descends to $V_\infty$ and provides the pair $(\mathcal{B}^*,V_\infty)$ with the structure of a double category. This is the free globularly generated double category $Q_\mathcal{B}$ associated to $\mathcal{B}$.

	The category $V_\infty$ described above comes equipped with a filtration $V_k$, which we call the free vertical filtration of $Q_\mathcal{B}$ and the set of squares $H_\infty$ of $Q_\mathcal{B}$ comes equipped with a horizontal filtration $H_k$. We call $H_k$ the free horizontal filtration of $Q_\mathcal{B}$, see \cite[Lemma 2.20]{yo2}. In Section \ref{s4} we deal with the free globularly generated double category associated to more than one decorated bicategory. In that case we will write the corresponding decorated bicategory as superscript in the pieces of structure described above. We now proceed to the proof of Theorem \ref{thmprojection1}. We first briefly explain our strategy for the proof.

	\

	\noindent \textit{Strategy}

	\

	\noindent The construction in Theorem \ref{thmprojection1} will follow a strategy similar to that employed in the free globularly generated double category construction explained above. Let $\mathcal{B}$ be a decorated bicategory. Let $C$ be a globularly generated double category satisfying the equation $H^*C=\mathcal{B}$. We will begin the construction of $\pi^C$ by first defining $\pi^C$ on squares of the form
	
	\begin{center}
		
		\begin{tikzpicture}
			\matrix(m)[matrix of math nodes, row sep=4em, column sep=4em,text height=1.5ex, text depth=0.25ex]
			{\bullet&\bullet&\bullet&\bullet\\
				\bullet&\bullet&\bullet&\bullet\\};
			\path[->,font=\scriptsize,>=angle 90]
			(m-1-1) edge []node {} (m-1-2)
			edge [blue]node{} (m-2-1)
			(m-2-1) edge []node{} (m-2-2)			
			(m-1-2) edge [blue]node{} (m-2-2)
			(m-1-1) edge [white] node [black][fill=white]{$\varphi$}(m-2-2)

			(m-1-3) edge [red]node {} (m-1-4)
			edge node[left]{$f$} (m-2-3)
			(m-2-3) edge [red]node{} (m-2-4)			
			(m-1-4) edge node[right]{$f$} (m-2-4)
			(m-1-3) edge[white] node [black][fill=white]{$i_f$}(m-2-4);
		\end{tikzpicture}
	\end{center}

	\noindent Recall that we denote the set of the above squares by $\mathbb{G}$. We thus first define $\pi^C$ on $\mathbb{G}$. The equation

	\[H^*\pi^C\restriction_{\mathcal{B}}=id_\mathcal{B}\]

	\noindent together with the requirement that $\pi^C$ is a strict double functor, forces $\pi^C$ to act as the identity in such squares. We extend $\pi^C$ formally to $E_1$ and we extend this freely to $F_1$. We proceed through an induction argument, to extend $\pi^C$ to $E_k,F_k$ for every positive integer $k$. We do this carefully so as to make these extensions compatible with the finite terms of the structure data $d_k,c_k,s_k,t_k,\ast_k$ defined on categories $F_k$. This is the content of Lemma \ref{leminductionpi1}. We take limits and define a functor on $F_\infty$. We prove that this functor is well defined with respect to the equivalence relation $R_\infty$ defining $Q_\mathcal{B}$. This is the content of Lemma \ref{lemmainductionpi2} and Lemma \ref{compatibilitywithRinftypi}. This will prove that our limit functor descends to a functor from $V_\infty$ to $C_1$. This will be the morphism functor of the canonical double projection $\pi^C$. Finally we take advantage of the vertical filtration on $C$ to prove uniqueness and square surjectivity of $\pi^C$.
	
	We show how the construction of $\pi^C$ works in a specific example. Let $\mathcal{B}$ be the decorated 2-group $(2\Omega\mathbb{Z}_2,\Omega\mathbb{Z}_2)$ as in \cite[Example 3.1]{yo2}. Consider squares of the form
	
	\begin{center}
		
		\begin{tikzpicture}
			\matrix(m)[matrix of math nodes, row sep=4em, column sep=4em,text height=1.5ex, text depth=0.25ex]
			{\ast&\ast\\
				\ast&\ast\\};
			\path[->,font=\scriptsize,>=angle 90]
			(m-1-1) edge [red]node {} (m-1-2)
			edge node[left]{$a$} (m-2-1)
			(m-2-1) edge [red]node{} (m-2-2)			
			(m-1-2) edge node[right]{$a$} (m-2-2)
			(m-1-1) edge [white] node [black][fill=white]{$(a,b)$}(m-2-2) 
			;
		\end{tikzpicture}
	\end{center}
	
	\noindent where $a,b\in\mathbb{Z}_2$. The collection of squares as above forms a double groupoid, which we denote by $C$. The vertical composition of two squares $(a,b)$ and $(a',b')$ in $C$ is the square $(aa',bb')$ and the horizontal composition of two horizontally composable squares $(a,b)$ and $(a,b')$ is $(a,bb')$. It is easily seen that $C$ is globularly generated, has vertical length 1, and that the groupoid of squares of $C$ is the delooping groupoid $\Omega V$ of the Klein 4-group $V$. Moreover, if we identify the 2-cells in $\mathcal{B}$ with the squares $(1,b)$ with $b\in\mathbb{Z}_2$, then the equation $H^*C=\mathcal{B}$ holds. We briefly describe the procedure to construct $\pi^C$ in this case.

	The generating set $\mathbb{G}$ for the free globularly generated double category $Q_\mathcal{B}$ is formed by the formal squares

	\begin{center}
		
		\begin{tikzpicture}
			\matrix(m)[matrix of math nodes, row sep=4em, column sep=4em,text height=1.5ex, text depth=0.25ex]
			{\ast&\ast&\ast&\ast&\ast&\ast\\
				\ast&\ast&\ast&\ast&\ast&\ast\\};
			\path[->,font=\scriptsize,>=angle 90]
			(m-1-1) edge [red]node {} (m-1-2)
			edge node[black,left]{$-1$} (m-2-1)
			(m-2-1) edge [red]node{} (m-2-2)			
			(m-1-2) edge node[black,right]{$-1$} (m-2-2)
			(m-1-1) edge [white] node [black][fill=white]{$i_{-1}$}(m-2-2)
			
			(m-1-3) edge [red]node {} (m-1-4)
			edge [blue]node[black,left]{$1$} (m-2-3)
			(m-2-3) edge [red]node{} (m-2-4)			
			(m-1-4) edge [blue]node[black,right]{$1$} (m-2-4)
			(m-1-3) edge [white] node [black][fill=white]{$i_1$}(m-2-4)
			
			(m-1-5) edge [red]node {} (m-1-6)
			edge [blue]node[black,left]{$1$} (m-2-5)
			(m-2-5) edge [red]node{} (m-2-6)			
			(m-1-6) edge [blue]node[black,right]{$1$} (m-2-6)
			(m-1-5) edge [white] node [black][fill=white]{$-1$}(m-2-6)
			;
		\end{tikzpicture}
	\end{center}

	\noindent The first step in the construction of $\pi^C$ associates to the above squares, from left to right, the following squares in $C$:

	\begin{center}
		
		\begin{tikzpicture}
			\matrix(m)[matrix of math nodes, row sep=4em, column sep=4em,text height=1.5ex, text depth=0.25ex]
			{\ast&\ast&\ast&\ast&\ast&\ast\\
				\ast&\ast&\ast&\ast&\ast&\ast\\};
			\path[->,font=\scriptsize,>=angle 90]
			(m-1-1) edge [red]node {} (m-1-2)
			edge node[black,left]{$-1$} (m-2-1)
			(m-2-1) edge [red]node{} (m-2-2)			
			(m-1-2) edge node[black,right]{$-1$} (m-2-2)
			(m-1-1) edge [white] node [black][fill=white]{$(-1,1)$}(m-2-2)
			
			(m-1-3) edge [red]node {} (m-1-4)
			edge [blue]node[black,left]{$1$} (m-2-3)
			(m-2-3) edge [red]node{} (m-2-4)			
			(m-1-4) edge [blue]node[black,right]{$1$} (m-2-4)
			(m-1-3) edge [white] node [black][fill=white]{$(1,1)$}(m-2-4)
			
			(m-1-5) edge [red]node {} (m-1-6)
			edge [blue]node[black,left]{$1$} (m-2-5)
			(m-2-5) edge [red]node{} (m-2-6)			
			(m-1-6) edge [blue]node[black,right]{$1$} (m-2-6)
			(m-1-5) edge [white] node [black][fill=white]{$(1,-1)$}(m-2-6)
			;
		\end{tikzpicture}
	\end{center}

	\noindent The second step of the free globularly generated double category construction for $\mathcal{B}$ considers the free category $F_1$ on $\mathbb{G}$. In this case $F_1$ is the delooping category on the free monoid generated by the three squares forming $\mathbb{G}$ above. The second step of the construction of $\pi^C$ is thus the unique functor from $F_1$ to $C_1$ extending the value of $\pi^C$ on $\mathbb{G}$ described above. We can recover the square

	\begin{center}
		
		\begin{tikzpicture}
			\matrix(m)[matrix of math nodes, row sep=4em, column sep=4em,text height=1.5ex, text depth=0.25ex]
			{\ast&\ast\\
				\ast&\ast\\};
			\path[->,font=\scriptsize,>=angle 90]
			(m-1-1) edge [red]node {} (m-1-2)
			edge node[left]{$-1$} (m-2-1)
			(m-2-1) edge [red]node{} (m-2-2)			
			(m-1-2) edge node[right]{$-1$} (m-2-2)
			(m-1-1) edge [white] node [black][fill=white]{$(-1,-1)$}(m-2-2) 
			;
		\end{tikzpicture}
	\end{center}

	\noindent in $C$ as the image, under $\pi^C$, of the formal composition

	\begin{center}
		
		\begin{tikzpicture}
			\matrix(m)[matrix of math nodes, row sep=4em, column sep=4em,text height=1.5ex, text depth=0.25ex]
			{\ast&\ast\\
				\ast&\ast\\
				\ast&\ast\\};
			\path[->,font=\scriptsize,>=angle 90]
			(m-1-1) edge [red]node {} (m-1-2)
			edge node[left]{$-1$} (m-2-1)
			(m-2-1) edge [red]node{} (m-2-2)			
			(m-1-2) edge node[right]{$-1$} (m-2-2)
			(m-1-1) edge [white] node [black][fill=white]{$i_{-1}$}(m-2-2)

			(m-2-1) edge [blue]node[black,left]{$1$} (m-3-1)
			(m-3-1) edge [red]node{} (m-3-2)			
			(m-2-2) edge [blue]node[black,right]{$1$} (m-3-2)
			(m-2-1) edge [white] node [black][fill=white]{$-1$}(m-3-2) 
			;
		\end{tikzpicture}
	\end{center}

	\noindent in $F_1$. By \cite[Proposition 5.1]{yo2} the decorated 2-group $\mathcal{B}$ has free length 1 and thus every square in $Q_\mathcal{B}$ can be written as a vertical composition of squares in $F_1$. It is not difficult to see that, $V_\infty$ which in this case is $F^1/R_\infty$, is equal to the delooping groupoid $\Omega(\mathbb{Z}_2\ast\mathbb{Z}_2)$ on the free product $\mathbb{Z}_2\ast\mathbb{Z}_2$ that the canonical double projection $\pi^C$ is the double functor from $Q_\mathcal{B}$ to $C$ induced by the projection from $\mathbb{Z}_1\ast\mathbb{Z}_2$ to $V$ induced by the square-assignments described above. In the case where a decorated bicategory $\mathcal{B}$ has free length $>1$, e.g. \cite[Example 4.1]{yo2} the construction of the canonical double projection $\pi^C$ follows the above pattern inductively.
	
	\

	\noindent \textit{Construction}
	
	\

	\begin{notation}\label{notproj1}
		Let $C$ be a double category. We denote by $q^C$ the function from $\mbox{Hom}_{{C_1}_{s,t}}$ to $\mbox{Hom}_{C_1}$ associating to every evaluation $\Phi$ of a compatible sequence of squares $\Psi_1,...,\Psi_k$ in $C$, the horizontal composition $\Psi_k\ast\dots\ast\Psi_1$ following the parenthesis pattern defining $\Phi$. 
	\end{notation}

	\begin{lem}\label{leminductionpi1}
		Let $\mathcal{B}$ be a decorated bicategory. Let $C$ be a globularly generated double category satisfying the equation $H^*C=\mathcal{B}$. There exists a pair, formed by a sequence of functions $E^\pi_k:E_k\to\mbox{Hom}_{C_1}$ and a sequence of functors $F^\pi_k:F_k\to C_1$, with $k\geq 1$, such that the following conditions are satisfied:
		
		\begin{enumerate}
			\item The restriction $E^\pi_1\restriction_{\mathbb{G}}$ is equal to $id_{\mathbb{G}}$.
			
			\item For every $m,k\geq 1$ such that $m\leq k$, the restriction of $E^\pi_k$ to the set of morphisms of $F_m$ is equal to the morphism function of $F^\pi_m$, and the restriction to $E_m$ of the morphism function of $F^\pi_k$ is equal to $E^\pi_m$.

			\item The following two triangles commute for every positive integer $k$:
			
			\begin{center}
				
				\begin{tikzpicture}
					\matrix(m)[matrix of math nodes, row sep=4em, column sep=3em,text height=1.5ex, text depth=0.25ex]
					{E_k&&\mbox{Hom}_{C_1}\\
						&\mathcal{B}_1&\\};
					\path[->,font=\scriptsize,>=angle 90]
					(m-1-1) edge node[auto] {$E^\pi_k$} (m-1-3)
					edge node[left] {$d_{k},c_k$} (m-2-2)			
					(m-1-3) edge node[right] {$dom,codom$} (m-2-2);
				\end{tikzpicture}

			\end{center}
			
			\item The following two triangles commute for every $k\geq 1$:

			\begin{center}
				
				\begin{tikzpicture}
					\matrix(m)[matrix of math nodes, row sep=4em, column sep=3em,text height=1.5ex, text depth=0.25ex]
					{E_k&&\mbox{Hom}_{C_1}\\
						&\mbox{Hom}_{\mathcal{B}^*}&\\};
					\path[->,font=\scriptsize,>=angle 90]
					(m-1-1) edge node[auto] {$E^\pi_k$} (m-1-3)
					edge node[left] {$s_{k+1},t_{k+1}$} (m-2-2)			
					(m-1-3) edge node[right] {$s,t$} (m-2-2)
					;
				\end{tikzpicture}

			\end{center}

			\item The following two triangles commute for every $k\geq 1$:
			
			\begin{center}
				
				\begin{tikzpicture}
					\matrix(m)[matrix of math nodes, row sep=4em, column sep=3em,text height=1.5ex, text depth=0.25ex]
					{F_k&&C_1\\
						&\mathcal{B}^*&\\};
					\path[->,font=\scriptsize,>=angle 90]
					(m-1-1) edge node[auto] {$F^\pi_k$} (m-1-3)
					edge node[left] {$s_{k+1},t_{k+1}$} (m-2-2)			
					(m-1-3) edge node[auto] {$s,t$} (m-2-2)
					;
				\end{tikzpicture}

			\end{center}

			\item The following square commutes for every $k\geq 1$:
			
			\begin{center}
				
				\begin{tikzpicture}
					\matrix(m)[matrix of math nodes, row sep=4em, column sep=4em,text height=1.5ex, text depth=0.25ex]
					{E_k\times_{\mbox{Hom}_{\mathcal{B}^*}} E_k&\mbox{Hom}_{C_1}\times_{\mbox{Hom}_{\mathcal{B}^*}}\mbox{Hom}_{C_1}\\
						E_k&\mbox{Hom}_{C_1}\\};
					\path[->,font=\scriptsize,>=angle 90]
					(m-1-1) edge node[auto] {$E^\pi_k\times E^\pi_k$} (m-1-2)
					edge node[left] {$\ast_k$} (m-2-1)
					(m-2-1) edge node[below] {$E^\pi_k$} (m-2-2)						
					(m-1-2) edge node[auto] {$\ast$} (m-2-2);
				\end{tikzpicture}

			\end{center}

		\end{enumerate}

		\noindent Moreover, conditions 1-5 above determine the pair of sequences $E^\pi_k$ and $F^\pi_k$.
	\end{lem}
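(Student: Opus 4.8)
The plan is to build the two families by induction on $k$, alternating at each stage the two universal constructions that already appear in the free globularly generated double category: horizontal concatenation, which produces $E^\pi_{k+1}$ out of $F^\pi_k$, and the universal property of the free category, which produces $F^\pi_k$ out of $E^\pi_k$. The function $q^C$ of Notation \ref{notproj1} is the bookkeeping device that turns a formal bracketed word of squares of $C$ into its actual (possibly weak) horizontal composite, and it is what lets each $E^\pi_k$ be written down explicitly. Throughout, the object assignment of every $F^\pi_k$ is forced to be the identity on the $1$-cells of $\mathcal{B}$: since $H^*C=\mathcal{B}$, the $1$-cells of $\mathcal{B}=HC$ are exactly the horizontal $1$-cells of $C$, i.e. the objects of $C_1$, so there is no choice to make on objects, and the values $d_k,c_k$ (horizontal composites of top/bottom edges) land among these objects.

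For the base case $k=1$ I would first note that every generator in $\mathbb{G}$ is literally a square of $C$: the globular generators are the $2$-cells of $HC=\mathcal{B}$ and the squares $i_f$ are the horizontal units of $C$ on the vertical morphisms $f$ of $C_0=\mathcal{B}^*$. I set $E^\pi_1=id$ on $\mathbb{G}$, which is condition 1, and extend to $E_1=\mathcal{B}_{1_{s_0,t_0}}$ by sending the evaluation of a compatible word of generators $\Psi_1,\dots,\Psi_n$ to the corresponding bracketed horizontal composite $q^C$ of $\Psi_1,\dots,\Psi_n$ in $C$; compatibility of the word together with $\mathbb{G}\subseteq\mbox{Hom}_{C_1}$ guarantees this composite exists. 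I then extend $E^\pi_1$ to a functor $F^\pi_1:F_1\to C_1$ by the universal property of the free category $F_1$, whose only hypothesis is precisely that $E^\pi_1$ intertwine $d_0,c_0$ with $dom,codom$, i.e. condition 3 at level $1$. The inductive step repeats this pattern: given $E^\pi_m,F^\pi_m$ for $m\le k$ satisfying conditions 1--6, I define $E^\pi_{k+1}$ on $E_{k+1}$ (whose elements are compatible horizontal words $u_1,\dots,u_n$ of morphisms of $F_k$) by applying $q^C$ to $F^\pi_k(u_1),\dots,F^\pi_k(u_n)$ --- this is composable exactly because $F^\pi_k$ already respects $s,t$, the inductive form of condition 5 --- and then extend to $F^\pi_{k+1}$ by the same free-category universal property. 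Conditions 4 and 6 are read off from how horizontal composition in $C$ interacts with horizontal source/target and with concatenation, and hold by construction of $q^C$; condition 2 (the nesting of the two families) follows from the coherence of horizontal composition in $C$ (associativity up to the horizontal associators), the inductive hypothesis, the uniqueness clause of the free-category universal property, and the compatibility of the finite structure maps recorded in \cite[Lemma 2.5]{yo2}.

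The step that needs the most care, and on which the whole extension hinges, is condition 3 at the inductive level, because it is exactly the hypothesis the free-category universal property demands before it will produce $F^\pi_{k+1}$ from $E^\pi_{k+1}$. Concretely I must check that the top and bottom $1$-cells $dom,codom$ of the horizontal composite $q^C(F^\pi_k(u_1),\dots,F^\pi_k(u_n))$ in $C$ coincide with $d_k,c_k$ of the word $u_1,\dots,u_n$; this is an instance of the double-category axiom relating horizontal composition of squares to their horizontal-boundary $1$-cells, combined with the inductive form of condition 3 for $F^\pi_k$. Once this compatibility is secured the extension is automatic and all six conditions propagate to level $k+1$.

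Finally, the pair is determined by running the same induction in reverse. The object assignment is fixed by $H^*C=\mathcal{B}$, condition 1 fixes $E^\pi_1$ on $\mathbb{G}$, and the uniqueness clause of the free-category universal property forces each $F^\pi_k$ once $E^\pi_k$ and the objects are known. The one place I would scrutinize is the passage from the generators to a genuine horizontal composite: since distinct squares of $C$ may share a boundary, conditions 3--4 alone pin down only the boundary of $E^\pi_k$ on such a word, and it is compatibility with the concatenations $\ast_k$ (the content of condition 6, equivalently the requirement that these functions be the components of a strict double functor) that fixes the value itself. I would therefore verify whether the intended argument extracts this from conditions 1--5 directly or tacitly invokes condition 6; either way, compatibility with $\ast_k$ propagates the generator values across all of $E_k$ and leaves no further freedom.
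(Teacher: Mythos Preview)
Your proposal is correct and follows essentially the same approach as the paper: both define $E^\pi_1$ on $\mathbb{G}$ as the identity, extend to $E_1$ via the evaluation map $q^C$ (the paper writes this as $q^C\mu_{id_\mathbb{G},id_{\mathcal{B}^*}}$), pass to $F^\pi_1$ by the universal property of the free category, and then iterate, setting $E^\pi_k=q^C\mu_{F^\pi_{k-1},id_{\mathcal{B}^*}}$ at each stage. Your closing observation about uniqueness is well taken: the paper asserts that conditions 1--5 determine the pair but does not argue this explicitly, and as you note, on a non-generator word in $E_1$ conditions 3--5 constrain only the boundary, so compatibility with $\ast_k$ (condition 6) is what actually propagates the generator values; the paper appears to rely on this tacitly.
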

	
	\begin{proof}
		Let $\mathcal{B}$ be a decorated bicategory. Let $C$ be a globularly generated double category such that $H^*C=\mathcal{B}$. We wish to construct a sequence of functions $E^{\pi}_k$ from $E_k^\mathcal{B}$ to Hom$_{C_1}$ and a sequence of functors $F^{\pi}_k$ from $F_k^\mathcal{B}$ to $C_1$ with $k$ running through the collection of positive integers, in such a way that the pair of sequences $E_k^{\pi}$ and $F_k^{\pi}$ satisfies conditions 1-6 of the lemma.
		
		We proceed inductively on $k$. We begin with the definition of function $E_1^{\pi}$. Observe first that from the fact that $H^*C=\mathcal{B}$ it follows that the collection of morphisms of $\mathcal{B}^*$ is equal to the collection of vertical morphisms of $C$. There is thus an obvious identification between the formal horizontal identities of $Q_\mathcal{B}$ and the collection of horizontal identities of $C$. We use this identification and consider the horizontal identities of both $Q_\mathcal{B}$ and $C$ as being the same. Observe that that the equation $H^*C=\mathcal{B}$ also implies that the globular squares of $C$ are precisely the 2-cells of $\mathcal{B}$. Thus $\mathbb{G}$ is the set of generators, as a globularly generated double category, of $C$. We make $E_1^{\pi}$ to be the composition $q^C\mu_{id_\mathbb{G},id_{\mathcal{B}^*}}$. Thus defined $E_1^{\pi}$ is a function from $E_1$ to Hom$_{C_1}$. Moreover, from the way it was defined it easily follows that $E_1^{\pi}$ satisfies condition 1 and conditions 3-5 in the statement the lemma. We now define the functor $F_1^{\pi}$ as follows: Observe first that from the fact that $H^*C=\mathcal{B}$ it follows that the collection of horizontal morphisms of $C$ is equal to $\mathcal{B}_1$. We make the object function of $F_1^{\pi}$ to be $id_{\mathcal{B}_1}$. From the fact that $E_1^{\pi}$ satisfies condition 3 of the statement of the lemma and from the fact that $E_1$ freely generates $F_1$ with respect to $d_1,c_1$ it follows that there exists a unique extension of $E_1^{\pi}$ to a functor from $F_1$ to $C_1$. We make $F_1^{\pi}$ to be this extension. Thus defined $F_1^{\pi}$ trivially satisfies condition 2 of the statement of the lemma with respect to $E_1^{\pi}$. The fact that the functor $F_1^{\pi}$ satisfies the condition 5 in the statement of the lemma follows from the fact that the function $E_1^{\pi}$ satisfies condition 4 and from the functoriality of $s_1$ and $t_1$.
		
		Let $k>1$. Assume now that for every $m<k$ the function $E_m^{\pi}$ from $E_m$ to Hom$_{C_1}$ and the functor $F_m^{\pi}$ from $F_m$ to $C_1$ have been defined, in such a way that the pair of sequences $E_m^{\pi}$ and $F_m^{\pi}$ with $m$ running through the collection of positive integers strictly less than $k$ satisfies the conditions 1-6 in the statement of the lemma. We now construct a function $E_k^{\pi}$ from $E_k$ to Hom$_{C_1}$ and a functor $F_k^{\pi}$ from $F_k$ to $C_1$ such that the pair $E_k^{\pi},F_k^{\pi}$ satisfies conditions 1-6 in the statement of the lemma with respect to the pair of sequences $E_m^{\pi},F_m^{\pi}$ with $m$ running through the collection of positive integers strictly less than $k$.
		
		We first define the function $E_k^{\pi}$. Observe first that from the assumption that $F_{k-1}^{\pi}$ satisfies condition 5 it follows that the function $\mu_{F_{k-1}^{\pi},id_{\mathcal{B}^*}}$ is well defined. We make $E_k^{\pi}$ to be composition $q^C\mu_{F_{k-1}^{\pi},id_{\mathcal{B}^*}}$. Thus defined $E_k^{\pi}$ is a function from $E_k$ to Hom$_{C_1}$. From the way it was defined it is clear that $E_k^{\pi}$ satisfies conditions 4 and 6 of the lemma. From the induction hypothesis it follows that $E_k^{\pi}$ satisfies conditions 1 and 2. The function $E_k^{\pi}$ satisfies the condition 3 of the lemma by the fact that it satisfies condition 2 and by the functoriality of $F_{k-1}^{\pi}$. We now define the functor $F_k^{\pi}$. By the fact that the function $E_k^{\pi}$ satisfies the condition 3 of the lemma it follows that there is a unique extension of $E_k^{\pi}$ to a functor from $F_k$ to $C_1$. We make $F_k^{\pi}$ to be this functor. Thus defined $F_k^{\pi}$ satisfies the condition 2 of the lemma. This follows from the way $F_k^{\pi}$ was constructed and from the fact that condition 2 is already satisfied by the function $E_k^{\pi}$. From the fact that $E_k^{\pi}$ satisfies condition 4 it follows that the functor $F_k^{\pi}$ satisfies the condition 5 of the lemma. We have thus constructed, recursively, a pair of sequences $E_k^{\pi},F_k^{\pi}$ satisfying the conditions in the statement of the lemma. This concludes the proof.
	\end{proof}

	

	\begin{obs}\label{observationinductionpi}
		Let $\mathcal{B}$ be a decorated bicategory. Condition 2 of Lemma \ref{leminductionpi1} implies that for every pair $m,k\geq 1$ such that $m\leq k$ the following two equations hold:

		\[E_k^{\pi}\restriction_{E_m}=E_m^{\pi} \ \mbox{and} \ F_k^{\pi}\restriction_{F_m}=F_m^{\pi}\]
	\end{obs}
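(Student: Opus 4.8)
The plan is to derive both equations of the observation as purely formal consequences of Condition 2 of Lemma \ref{leminductionpi1}, invoking only two structural facts about the free globularly generated double category construction: that the sequences $E_k$ and $F_k$ are increasing, and that they interleave as $E_m\subseteq\mathrm{Hom}_{F_m}\subseteq E_{m+1}$, all of which is recorded in \cite[Lemma 2.5]{yo2}. Before starting I would isolate one consequence of Condition 2 that I will use repeatedly: taking $m=k$ in the second clause, the morphism function of $F^\pi_k$ restricts to $E^\pi_k$ on $E_k$. This instance is well posed precisely because $E_k\subseteq\mathrm{Hom}_{F_k}$, and it encodes that $F^\pi_k$ is the functorial extension of $E^\pi_k$.

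For the first equation, fix $m\leq k$. Since $E_m\subseteq E_k$, the restriction of the morphism function of $F^\pi_k$ to $E_m$ can be computed in two ways: by the preceding paragraph it equals $E^\pi_k\restriction_{E_m}$, while the second clause of Condition 2 for the pair $(m,k)$ says it equals $E^\pi_m$. Comparing the two gives $E^\pi_k\restriction_{E_m}=E^\pi_m$.

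For the second equation I would compare the functors $F^\pi_k\restriction_{F_m}$ and $F^\pi_m$ on objects and on morphisms separately. On objects they agree, because every $F^\pi_j$ has object function the identity of the common object set $\mathcal{B}_1$ of horizontal morphisms. On morphisms, assume $m<k$, so that $\mathrm{Hom}_{F_m}\subseteq E_{m+1}\subseteq E_k$. Restricting the morphism function of $F^\pi_k$ to $\mathrm{Hom}_{F_m}$ and using the extension property isolated above identifies it with $E^\pi_k\restriction_{\mathrm{Hom}_{F_m}}$, which the first clause of Condition 2 for $(m,k)$ equates with the morphism function of $F^\pi_m$. Thus the two functors agree on objects and on morphisms and are therefore equal; the remaining case $m=k$ is trivial.

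The argument is entirely formal, so I anticipate no real obstacle. The only delicate point is bookkeeping with the interleaving inclusions: one must notice that the first clause of Condition 2 is literally applicable only when $m<k$, where $\mathrm{Hom}_{F_m}\subseteq E_k$ holds, and that the ostensibly separate extension property is just the $m=k$ instance of the second clause. Keeping track of which set each restriction lives on is the whole of the work.
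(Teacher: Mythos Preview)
Your argument is correct; it carefully unpacks exactly the bookkeeping the paper leaves implicit, since the paper states this as an observation without proof and treats both equations as immediate consequences of Condition~2. Your isolation of the $m=k$ instance of the second clause and your handling of the interleaving $E_m\subseteq\mathrm{Hom}_{F_m}\subseteq E_{m+1}$ are precisely what is needed, and there is nothing to add.
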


	\begin{notation}
		Let $\mathcal{B}$ be a decorated bicategory. Let $C$ be a globularly generated double category satisfying the equation $H^*C=\mathcal{B}$. In that case we write $E_\infty^{\pi}$ for the limit $\varinjlim E_k^{\pi}$ in \textbf{Set} and we write $F_\infty^{\pi}$ for the limit $\varinjlim F_k^{\pi^C}$ \textbf{Cat}. Thus defined $E_\infty^{\pi}$ is a function from $E_\infty$ to the set of squares of $C$ and $F_\infty^{\pi}$ is a functor from $F_\infty$ to the category of squares of $C$. The function $E_\infty^{\pi}$ is the morphism function of $F_\infty^{\pi}$.
	\end{notation}

	\noindent The following lemma follows directly from Lemma \ref{leminductionpi1} and Observation \ref{observationinductionpi}.


	\begin{lem}\label{lemmainductionpi2}
		Let $\mathcal{B}$ be a decorated bicategory. Let $C$ be a globularly generated double category such that $H^*C=\mathcal{B}$. In that case $E_\infty^{\pi}$ and $F_\infty^{\pi}$ satisfy the following conditions:

		\begin{enumerate}
			
			\item The equations $E_\infty^{\pi}\restriction_{E_k}=E_k^{\pi}$ and $F_k^{\pi}\restriction_{F_k}=F_k^{\pi}$ hold for every $k\geq 1$.
			
			\item The following two triangles commute:
			
			\begin{center}
				
				\begin{tikzpicture}
					\matrix(m)[matrix of math nodes, row sep=4em, column sep=3em,text height=1.5ex, text depth=0.25ex]
					{F_\infty&&C_1\\
						&\mathcal{B}^*&\\};
					\path[->,font=\scriptsize,>=angle 90]
					(m-1-1) edge node[auto] {$F_\infty^{\pi}$} (m-1-3)
					edge node[left] {$s_\infty,t_\infty$} (m-2-2)			
					(m-1-3) edge node[auto] {$s,t$} (m-2-2)
					;
				\end{tikzpicture}

			\end{center}

			\item The following square commutes:
			
			\begin{center}
				
				\begin{tikzpicture}
					\matrix(m)[matrix of math nodes, row sep=4em, column sep=4em,text height=1.5ex, text depth=0.25ex]
					{E_\infty\times_{\mbox{Hom}_{\mathcal{B}^*}} E_\infty& \mbox{Hom}_{C_1}\times_{\mbox{Hom}_{\mathcal{B}^*}}\mbox{Hom}_{C_1}\\
						E_\infty&\mbox{Hom}_{C_1}\\};
					\path[->,font=\scriptsize,>=angle 90]
					(m-1-1) edge node[auto] {$E_\infty^{\pi}\times E_\infty^{\pi}$} (m-1-2)
					edge node[left] {$\ast_\infty$} (m-2-1)
					(m-2-1) edge node[below]{$E_\infty^{\pi}$} (m-2-2)							
					(m-1-2) edge node[auto] {$\ast$} (m-2-2);
				\end{tikzpicture}

			\end{center}

		\end{enumerate}

	\end{lem}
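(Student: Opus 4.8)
The plan is to deduce each of the three conditions from its finite-stage counterpart in Lemma \ref{leminductionpi1} by passing to the colimit, exactly as the remark preceding the statement suggests. The organizing observation is that $E_\infty=\varinjlim E_k$ and $F_\infty=\varinjlim F_k$ are direct limits of increasing sequences whose transition maps are the canonical inclusions, so every element of $E_\infty$ (resp.\ morphism of $F_\infty$) already lives in some finite stage $E_k$ (resp.\ $F_k$), and all the limit structure maps restrict on each stage to the finite ones: $s_\infty$ and $t_\infty$ agree with $s_{k+1}$ and $t_{k+1}$ on $F_k$, and $\ast_\infty$ agrees with $\ast_k$ on $E_k$. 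Each displayed diagram will therefore commute on all of $F_\infty$ (resp.\ $E_\infty$) precisely because it commutes on every stage.

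First I would dispatch condition 1. Since $E_\infty^{\pi}=\varinjlim E_k^{\pi}$ and $F_\infty^{\pi}=\varinjlim F_k^{\pi}$, and since Observation \ref{observationinductionpi} records that the families $E_k^{\pi}$ and $F_k^{\pi}$ are compatible with the transition inclusions, the cocones defining these direct limits have the stage maps as their legs. Consequently the induced maps on the colimit restrict back to the stage maps, giving $E_\infty^{\pi}\restriction_{E_k}=E_k^{\pi}$ and $F_\infty^{\pi}\restriction_{F_k}=F_k^{\pi}$ for every $k\geq 1$; the second displayed equation in the statement is a typo for the latter.

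Next I would verify condition 2 by a stagewise diagram chase. Given an object or morphism $x$ of $F_\infty$, choose $k$ with $x\in F_k$. Then $s_\infty(x)=s_{k+1}(x)$ and $t_\infty(x)=t_{k+1}(x)$ by the construction of $F_\infty$ as a colimit, while $F_\infty^{\pi}(x)=F_k^{\pi}(x)$ by condition 1. Condition 5 of Lemma \ref{leminductionpi1} supplies $s(F_k^{\pi}(x))=s_{k+1}(x)$ and $t(F_k^{\pi}(x))=t_{k+1}(x)$, and chaining these identities yields $s\circ F_\infty^{\pi}=s_\infty$ and $t\circ F_\infty^{\pi}=t_\infty$ at $x$. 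As $x$ is arbitrary, both triangles commute.

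Finally, condition 3 follows by the same pattern applied to horizontal composition: a compatible pair $(\Phi,\Psi)$ in $E_\infty\times_{\mbox{Hom}_{\mathcal{B}^*}}E_\infty$ lies in $E_k\times_{\mbox{Hom}_{\mathcal{B}^*}}E_k$ for a common $k$, where $\ast_\infty$ restricts to $\ast_k$, and condition 6 of Lemma \ref{leminductionpi1} together with condition 1 identifies $E_\infty^{\pi}(\Phi\ast_\infty\Psi)$ with $E_\infty^{\pi}(\Phi)\ast E_\infty^{\pi}(\Psi)$. The single point deserving care --- and the only place where the limit genuinely enters --- is the selection of one index $k$ large enough to contain both factors of a given compatible pair simultaneously; this is immediate from the stages forming an increasing sequence, since one may take the larger of the two indices. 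No deeper obstacle appears, which is exactly why the lemma is stated to follow directly from Lemma \ref{leminductionpi1} and Observation \ref{observationinductionpi}.
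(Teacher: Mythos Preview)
Your proposal is correct and matches the paper's approach exactly: the paper gives no explicit proof for this lemma, stating only that it ``follows directly from Lemma \ref{leminductionpi1} and Observation \ref{observationinductionpi},'' which is precisely the stagewise-to-colimit argument you have spelled out. Your identification of the typo in condition 1 (the statement should read $F_\infty^{\pi}\restriction_{F_k}=F_k^{\pi}$) is also correct.
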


	\begin{lem}\label{compatibilitywithRinftypi}
		Let $\mathcal{B}$ be a decorated bicategory. Let $C$ be a globularly generated double category such that $H^*C=\mathcal{B}$. In that case the functor $F_\infty^{\pi}$ is well defined with respect to the equivalence relation $R_\infty$.
	\end{lem}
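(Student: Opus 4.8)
I want to show that the functor $F_\infty^{\pi}$ respects the equivalence relation $R_\infty$, i.e.\ that whenever two morphisms $\Phi, \Psi$ of $F_\infty$ are $R_\infty$-equivalent, their images $F_\infty^{\pi}(\Phi)$ and $F_\infty^{\pi}(\Psi)$ coincide in $C_1$. The strategy is to reduce this to the generating relations of $R_\infty$ and then verify each generator separately, using that $C$ is a genuine double category (so the double-category axioms — interchange, associativity, and unit laws for the two compositions — hold on the nose in its strict setting) and that $F_\infty^{\pi}$ is, by Lemma \ref{lemmainductionpi2}, a functor compatible with horizontal composition $\ast_\infty$.

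Let me think about what $R_\infty$ actually encodes. From the excerpt, $F_\infty$ is built freely from the globular and horizontal-identity data without yet imposing the relations that make $(\mathcal{B}^*, V_\infty)$ a double category over the given decoration $\mathcal{B}^*$. The relation $R_\infty$ (Definition 2.10 of \cite{yo2}) is precisely what implements the compositional structure of $\mathcal{B}^*$ and the compatibility of the two compositions. So I expect the generators of $R_\infty$ to be of a few recognizable kinds: relations forcing formal horizontal identities $i_f, i_g$ to compose according to composition in $\mathcal{B}^*$ (i.e.\ $i_g \ast i_f \sim i_{g\circ f}$ and $i_{\mathrm{id}} \sim$ the unit); the interchange relation between the formal vertical composition coming from the category structure of $F_\infty$ and the horizontal composition $\ast_\infty$; and associativity/coherence relations coming from the different parenthesizations built into the $X_{s,t}$ construction.

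The verification would proceed generator by generator. First I would recall the explicit list of generators of $R_\infty$ from \cite[Definition 2.10]{yo2}. Then, for each generator $(\Phi, \Psi) \in R_\infty$, I would compute $F_\infty^{\pi}(\Phi)$ and $F_\infty^{\pi}(\Psi)$ and check equality in $C_1$. The key tools are: (i) $F_\infty^{\pi}$ is a functor, so it preserves the categorical (vertical) composition of $F_\infty$; (ii) Lemma \ref{lemmainductionpi2}(3), which says $E_\infty^{\pi}$ intertwines $\ast_\infty$ with horizontal composition $\ast$ in $C$; and (iii) the fact that $E_1^{\pi}\restriction_{\mathbb{G}} = \mathrm{id}_{\mathbb{G}}$, so that each formal horizontal identity $i_f$ is sent to the genuine horizontal identity square on the vertical morphism $f$ in $C$. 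Using (iii) together with the unit and functoriality laws for horizontal identities in the double category $C$, the horizontal-identity relations collapse correctly; using (i) and (ii) together with the interchange law of $C$, the interchange relation is respected; and the parenthesization relations are handled by associativity of $\ast$ in $C$. In each case the point is that the defining axioms of $R_\infty$ are exactly the double-category axioms that $C$ already satisfies, transported across $F_\infty^{\pi}$ by (i)--(iii).

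\textbf{The main obstacle.} The delicate step is the interchange relation, because in $F_\infty$ the vertical (categorical) composition and the horizontal composition $\ast_\infty$ interact only up to the relations we are trying to verify — so I must take care to apply functoriality and the $\ast$-compatibility square of Lemma \ref{lemmainductionpi2} in the right order, and to invoke interchange in $C$ exactly where the two composites are forced to agree. A secondary subtlety is purely bookkeeping: since $R_\infty$ is defined as the equivalence relation \emph{generated} by these relations, I must argue that it suffices to check the generators, which follows because equality in $C_1$ is itself an equivalence relation and $F_\infty^{\pi}$ sends each generating pair to an equal pair, so the kernel congruence of $F_\infty^{\pi}$ contains all generators and hence contains $R_\infty$. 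Concretely, I would phrase the conclusion as: the relation $\{(\Phi,\Psi) : F_\infty^{\pi}(\Phi) = F_\infty^{\pi}(\Psi)\}$ is an equivalence relation containing every generator of $R_\infty$, hence contains $R_\infty$, which is exactly the asserted compatibility.
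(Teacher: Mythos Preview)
Your approach is correct and essentially the same as the paper's: both proceed by checking the generating relations of $R_\infty$ one at a time, using exactly the three tools you list --- functoriality of $F_\infty^{\pi}$, the $\ast$-compatibility square of Lemma~\ref{lemmainductionpi2}, and the identity action on $\mathbb{G}$.

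Two minor calibration points. First, your guessed enumeration of generators is incomplete: besides the horizontal-identity and interchange/coherence relations you name, $R_\infty$ also contains relations identifying the \emph{formal} vertical and horizontal compositions in $F_\infty$ of 2-cells of $\mathcal{B}$ with their \emph{actual} compositions in $\mathcal{B}$ (viewed as single elements of $\mathbb{G}$); the paper handles these explicitly using $E_1^{\pi}\restriction_{\mathbb{G}}=\mathrm{id}$ together with $H^*C=\mathcal{B}$, and your tools cover them without difficulty. Second, your identification of interchange as ``the delicate step'' is inverted relative to the paper: there interchange (relation~1) is dispatched in one line from functoriality plus condition~6, while the detailed bookkeeping is spent on the 2-cell and horizontal-identity composition relations (relations~2 and~3). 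Neither point is a gap --- once you actually recall Definition~2.10 as you say you would, the verification goes through exactly as you outline.
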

	
	\begin{proof}
		Let $\mathcal{B}$ be a decorated bicategory. Let $C$ be a globularly generated double category such that $H^*C=\mathcal{B}$. We wish to prove that $F_\infty^{\pi}$ is well defined with respect to the equivalence relation $R_\infty$. The fact that $F_\infty^{\pi}$ is well defined with respect to relation 1 in the definition of $R_\infty$ follows from the functoriality of $F_\infty^{\pi}$ together with the fact that $F_\infty^{\pi}$ satisfies conditions 5 and 6 of Lemma \ref{leminductionpi1}.
		
		We now prove that $F_\infty^{\pi}$ is well defined with respect to relation 2 in the definition of $R_\infty$. Let first $\Phi$ and $\Psi$ be globular squares of $\mathcal{B}$ such that the pair $\Phi,\Psi$ is compatible with respect to $d_\infty$ and $c_\infty$. In that case the image $F_\infty^{\pi}\Psi\bullet_\infty\Phi$ of the vertical composition $\Psi\bullet_\infty\Phi$ of under $F_\infty^{\pi}$ is equal to the image $F_1^{\pi}\Psi\bullet_\infty\Psi$ of $\Psi\bullet_\infty\Phi$ under $F_1^{\pi}$, which is, by functoriality of $F_1^{\pi}$ equal to the composition $F_1^{\pi}\Psi\bullet F_1^{\pi}\Phi$ in $C$ of $F_1^{\pi}\Phi$ and $F_1^{\pi}\Psi$, which is equal, by the definition of $F_1^{\pi}$ to $\Psi\bullet\Phi$. Now, $F_\infty^{\pi}\Psi\bullet\Phi$ is equal to $E_1^{\pi}\Psi\bullet\Phi$, which is equal, by the way $E_1^{\pi}$ was defined, to $\Psi\bullet\Phi$. The functor $F_\infty^{\pi}$ is thus well defined with respect to relation 2 in the definition of $R_\infty$ when restricted to the 2-cells of $\mathcal{B}$. Let now $\alpha$ and $\beta$ be morphisms of $\mathcal{B}$ such that the pair $\alpha,\beta$ is composable. In that case $F_\infty^{\pi}i_\beta\bullet_\infty i_\alpha$ is equal to $F_1^{\pi}i_\beta\bullet_\infty i_\alpha$, which is equal to $F_1^{\pi}i_\beta\bullet F_1^{\pi}i_\beta$. This is equal, again by the definition of $F_1^{\pi}$, to $i_\beta\bullet i_\alpha$. Now, $F_\infty^{\pi}i_{\beta\alpha}$ is equal to $E_1^{\pi}i_{\beta\alpha}$ which is, by the way $E_1^{\pi}$ was defined, equal to $i_{\beta\alpha}$, that is, $F_\infty^{\pi}i_{\beta\alpha}$ is equal to $i_\beta\bullet i_\alpha$. We conclude that $F_\infty^{\pi}$ is well defined with respect to relation 2 in the definition of $R_\infty$ when restricted to formal horizontal identities and thus $F_\infty^{\pi}$ is well defined with respect to relation 2 in the definition of $R_\infty$.
		
		We now prove that $F_\infty^{\pi}$ is well defined with respect to relation 3 in the definition of $R_\infty$. Let $\Phi$ and $\Psi$ be globular squares in $\mathcal{B}$ such that the pair $\Phi,\Psi$ is compatible with respect to $s_\infty$ and $t_\infty$. In that case $F_\infty^{\pi}\Psi\ast_\infty\Phi$ is equal to $E_1^{\pi}\Psi\ast_1\Phi$. This is equal, by the fact that $E_1^{\pi}$ satisfies condition 6 of Lemma \ref{leminductionpi1}, to $E_1^{\pi}\Psi\ast E_1^{\pi}\Phi$, which, by the definition of $E_1^{\pi}$ is equal to $\Psi\ast\Phi$. Now, $F_\infty^{\pi}\Psi\ast\Psi$ is equal to $E_1^{\pi}\Phi\ast\Psi$. This is equal, again by the way $E_1^{\pi}$ was defined, to $\Psi\ast\Phi$. We conclude that $F_\infty^{\pi}$ is well defined with respect to relation 3 in the definition of $R_\infty$.
		
		Finally, the fact that $F_\infty^{\pi}$ is well defined with respect to relations 4 and 5 in the definition of relation $R_\infty$ follows from conditions 3 and 5 of Lemma \ref{leminductionpi1} and from the fact that $id_\mathcal{B}$ carries left and right identity transformations to left and right identity transformations and associators to associators. This concludes the proof of the lemma.

	\end{proof}

	\begin{notation}
		Let $\mathcal{B}$ be a decorated bicategory. Let $C$ be a globularly generated double category such that $H^*C=\mathcal{B}$. In that case will write $V_\infty^{\pi}$ for the functor from $V_\infty$ to $C_1$ induced by $F_\infty^{\pi}$ and $R_\infty$. We write $H_\infty^{\pi}$ for the morphism function of $V_\infty^{\pi}$.
	\end{notation}

	\noindent The proof of the following lemma follows directly from Lemma \ref{lemmainductionpi2} by taking limits.

	\begin{lem}\label{lemmapiexistencefinal}
		Let $\mathcal{B}$ be a decorated bicategory. Let $C$ be a globularly generated double category such that $H^*C=\mathcal{B}$. In that case $V_\infty^{\pi}$ satisfies the following conditions:

		\begin{enumerate}
			
			\item The following two triangles commute:
			
			\begin{center}
				
				\begin{tikzpicture}
					\matrix(m)[matrix of math nodes, row sep=4em, column sep=3em,text height=1.5ex, text depth=0.25ex]
					{V_\infty&&C_1\\
						&\mathcal{B}^*&\\};
					\path[->,font=\scriptsize,>=angle 90]
					(m-1-1) edge node[auto] {$V_\infty^{\pi}$} (m-1-3)
					edge node[left] {$s_\infty,t_\infty$} (m-2-2)			
					(m-1-3) edge node[auto] {$s,t$} (m-2-2)
					;
				\end{tikzpicture}

			\end{center}

			\item The following square commutes for every $k\geq 1$:
			
			\begin{center}
				
				\begin{tikzpicture}
					\matrix(m)[matrix of math nodes, row sep=4em, column sep=4em,text height=1.5ex, text depth=0.25ex]
					{V_\infty\times_{\mathcal{B}^*} V_\infty& C_1\times_{\mathcal{B}^*} C_1\\
						V_\infty& C_1\\};
					\path[->,font=\scriptsize,>=angle 90]
					(m-1-1) edge node[auto] {$V_\infty^{\pi}\times V_\infty^{\pi}$} (m-1-2)
					edge node[left] {$\ast_\infty$} (m-2-1)		
					(m-2-1) edge node[below]{$V_\infty^{\pi}$}(m-2-2)					
					(m-1-2) edge node[auto] {$\ast$} (m-2-2);
				\end{tikzpicture}

			\end{center}

		\end{enumerate}
		
	\end{lem}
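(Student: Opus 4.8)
The plan is to deduce both statements from the corresponding assertions about $F_\infty^{\pi}$ in Lemma \ref{lemmainductionpi2} by descending along the quotient defining $V_\infty$. Write $p\colon F_\infty\to V_\infty$ for the quotient functor associated to $R_\infty$, so that $V_\infty=F_\infty/R_\infty$. By Lemma \ref{compatibilitywithRinftypi} the functor $F_\infty^{\pi}$ is compatible with $R_\infty$, and $V_\infty^{\pi}$ is by definition the induced functor; thus we have the equation $V_\infty^{\pi}\circ p=F_\infty^{\pi}$. Two further facts drive the argument: first, $p$ is surjective on objects and on morphisms, being a quotient; second, as recalled in the summary of the free globularly generated double category construction, the structural data of $F_\infty$ descend to $V_\infty$, so that $p$ intertwines the source and target functions $s_\infty,t_\infty$ and the horizontal composition $\ast_\infty$ of $F_\infty$ with those of $V_\infty$.

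First I would establish condition 1, treating the source function, the target being identical. Precomposing the triangle for $V_\infty^{\pi}$ with $p$ and substituting $V_\infty^{\pi}\circ p=F_\infty^{\pi}$ reduces the desired identity $s\circ V_\infty^{\pi}=s_\infty$ to the identity $s\circ F_\infty^{\pi}=s_\infty\circ p$ of functions on $F_\infty$. Here the left-hand side equals the source function of $F_\infty$ by condition 2 of Lemma \ref{lemmainductionpi2}, while the right-hand side equals the same function because $p$ intertwines the two source functions. Since $p$ is surjective on morphisms, the two functions $s\circ V_\infty^{\pi}$ and $s_\infty$ on $V_\infty$ coincide once they agree after precomposition with $p$, which yields the triangle. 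Condition 2 follows the same template: precomposing its square with the map $p\times p$ and invoking both $V_\infty^{\pi}\circ p=F_\infty^{\pi}$ and the compatibility of $p$ with $\ast_\infty$ reduces commutativity of the square for $V_\infty^{\pi}$ to commutativity of the analogous square for $F_\infty^{\pi}$, which is precisely condition 3 of Lemma \ref{lemmainductionpi2}; surjectivity of $p\times p$ then cancels it on the right.

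I expect the one point requiring care to be the surjectivity of the induced map $p\times p\colon F_\infty\times_{\mbox{Hom}_{\mathcal{B}^*}}F_\infty\to V_\infty\times_{\mbox{Hom}_{\mathcal{B}^*}}V_\infty$ onto the fiber product, since cancelling $p\times p$ in the previous paragraph depends on it. This amounts to checking that every $\ast_\infty$-composable pair in $V_\infty$ lifts to a $\ast_\infty$-composable pair in $F_\infty$, which in turn follows from the fact that the source and target functors descend to $V_\infty$: composability is detected on $\mbox{Hom}_{\mathcal{B}^*}$ both before and after passing to the quotient, so any lift of the two components along the surjection $p$ is automatically composable. With this observed, the remaining verifications are a routine diagram chase, confirming that the lemma follows directly from Lemma \ref{lemmainductionpi2} by passing to the quotient by $R_\infty$.
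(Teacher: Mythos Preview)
Your proposal is correct and matches the paper's approach: the paper omits the proof entirely, stating only that it ``follows directly from Lemma \ref{lemmainductionpi2} by taking limits'' (where ``taking limits'' should really read ``passing to the quotient by $R_\infty$''). Your argument is precisely the unpacking of that one-line remark, and the care you take with surjectivity of $p\times p$ onto the fiber product is a detail the paper leaves implicit.
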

	
	\

	\noindent \textit{Existence}

	\
	
	\noindent We now prove the existence part of Theorem \ref{thmprojection1}.
	
	\
	
	\noindent \textit{\textbf{Proof:}} Let $\mathcal{B}$ be a decorated bicategory. Let $C$ be a globularly generated double category such that $H^*C=\mathcal{B}$. We wish to construct a double functor $\pi^C:Q_\mathcal{B}\to C$ such that $H^*\pi=d_\mathcal{B}$.
	
	We make $\pi^C$ to be equal to the pair $(id_{\mathcal{B}^*},V_\infty^{\pi})$. The pair $\pi^C$ is a double functor from $Q_\mathcal{B}$ to $C$ by Lemma \ref{lemmapiexistencefinal} and by the fact that it clearly intertwines the horizontal identity functor $i_\infty$ in $Q_\mathcal{B}$ and the horizontal identity functor $i$ in $C$. The fact that $H^*\pi\restriction_{\mathbb{G}}$ is equal to $id_\mathcal{B}$ follows directly from the way $V_\infty^{\pi}$ was defined. This concludes the proof. $\blacksquare$ 
	
	\begin{definition}
		Let $\mathcal{B}$ be a decorated bicategory. Let $C$ be a globularly generated double category such that $H^*C=\mathcal{B}$. We call the double functor $\pi^C$ defined in the above the canonical double projection associated to $C$.
	\end{definition}

	\noindent When necessary we will write $V_\infty^{\pi^C}$ for the morphism functor $V_\infty^\pi$ of the canonical double projection associated to a globularly generated double category $C$. We will use the same convention for $F^\pi_k,H^\pi_k,V^\pi_k$ and $H^\pi_k$
	
	\

	\noindent \textit{Surjectivity}

	\

	\noindent We now prove the surjectivity on squares part of Theorem \ref{thmprojection1}. We begin with the following lemma.

	\begin{lem}\label{lemmasurjectivity}
		Let $\mathcal{B}$ be a decorated bicategory. Let $C$ be a globularly generated double category such that $H^*C=\mathcal{B}$. Let $k$ be a positive integer. The image of $H_\infty^{\pi^C}\restriction_{H_k}$ is equal to $H^k_C$ and the image category of $V_\infty^{\pi}\restriction_{V_k}$ is equal to $V^k_C$.
	\end{lem}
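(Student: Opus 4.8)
The plan is to prove the two equalities simultaneously by induction on $k$, using that the free filtrations $H_k$ and $V_k$ of $Q_\mathcal{B}$ are generated from the generating squares $\mathbb{G}$ by exactly the same mutual recursion as the filtrations $H^k_C$ and $V^k_C$ of the globularly generated double category $C$ — one alternately closes $\mathbb{G}$ under horizontal and vertical composition, starting from $V^1=\mathrm{vert}(\mathbb{G})$ and $H^1=\mathrm{hor}(\mathbb{G})$, and passing from level $k$ to level $k+1$ by $V_{k+1}=\mathrm{vert}(H_k)$ and $H_{k+1}=\mathrm{hor}(V_{k+1})$. Throughout I would use three properties of $\pi^C=(id_{\mathcal{B}^*},V_\infty^{\pi})$: it is the identity on $\mathbb{G}$ (Lemma \ref{leminductionpi1}, condition 1); it is the identity on $0$-cells and on horizontal morphisms (since $H^*\pi^C\restriction_\mathcal{B}=id_\mathcal{B}$ and the object function of $F_1^\pi$ is $id_{\mathcal{B}_1}$); and, being a strict double functor, its square function $H_\infty^{\pi^C}$ preserves both vertical composition (functoriality of $V_\infty^\pi$) and horizontal composition (Lemma \ref{lemmapiexistencefinal}(2)), while respecting the structural squares (Lemma \ref{compatibilitywithRinftypi}).

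For the base case I would observe that under the equation $H^*C=\mathcal{B}$ the generating squares of $C$ are precisely the $2$-cells of $\mathcal{B}$ together with the vertical identity squares, which is exactly $\mathbb{G}$, and that $\pi^C$ restricts to the identity between the generators of $Q_\mathcal{B}$ and those of $C$. Since $\pi^C$ preserves both compositions, $H_\infty^{\pi^C}$ carries $H_1$ onto $H^1_C$ and $V_\infty^{\pi}$ carries $V_1$ onto $V^1_C$; the only nontrivial point, surjectivity, is immediate because any composite of generators in $C$ is the image of the identically-named composite of generators in $Q_\mathcal{B}$.

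For the inductive step I would assume the claim at level $k$ and establish it first for $V^{k+1}_C$ and then for $H^{k+1}_C$. The inclusion $V_\infty^{\pi}(V_{k+1})\subseteq V^{k+1}_C$ holds because $V_{k+1}$ consists of vertical composites of squares in $H_k$, and $V_\infty^{\pi}$ sends such a composite to the vertical composite of its images, which lie in $H^k_C$ by hypothesis. For the reverse inclusion I would take $S\in V^{k+1}_C$, write it as a vertical composite of squares in $H^k_C$, lift each factor to a preimage in $H_k$ via the inductive hypothesis, and vertically compose these preimages in $Q_\mathcal{B}$; the result lies in $V_{k+1}$ and maps to $S$. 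The composition is legitimate precisely because $\pi^C$ is the identity on horizontal morphisms, so the lifts automatically carry the matching horizontal edges, and composability in $C$ lifts to composability in $Q_\mathcal{B}$. The passage from $V^{k+1}_C$ to $H^{k+1}_C$ is identical with the two compositions exchanged, the only extra ingredient being the weakness of horizontal composition: an element of $H^{k+1}_C$ is a horizontal composite of squares in $V^{k+1}_C$ together with associator and unitor squares, but these are globular, hence generators already present in $V_1\subseteq V_{k+1}$ and preserved by $\pi^C$ by Lemma \ref{compatibilitywithRinftypi}.

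The step I expect to be the main obstacle is the surjectivity (reverse) direction, and within it the bookkeeping that forces every lift to stay in the correct filtration level: one must check that the preimages furnished by the inductive hypothesis can be composed in $Q_\mathcal{B}$ without leaving $H_k$ or $V_{k+1}$, and that the structural squares required by the weak horizontal composition in $H^{k+1}_C$ arise as images of structural squares of $Q_\mathcal{B}$ at the appropriate level. Both reduce to $\pi^C$ acting as the identity on $0$- and $1$-cells and respecting $R_\infty$, so the difficulty is one of careful bookkeeping rather than a conceptual gap. Finally I note that, since $C$ is globularly generated, $C_1=\varinjlim V^k_C$, so the $V$-half of this lemma will immediately yield the surjectivity on squares asserted in Theorem \ref{thmprojection1}.
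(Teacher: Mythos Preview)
Your proposal is correct and follows essentially the same inductive scheme as the paper: both argue by induction on $k$, using that $\pi^C$ is the identity on the generating set $\mathbb{G}$ and preserves both compositions, and both obtain surjectivity at each stage by lifting a composite in $C$ to the identically-structured composite in $Q_\mathcal{B}$ (with composability of the lifts guaranteed because $\pi^C$ is the identity on horizontal and vertical edges).

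The only substantive difference is the order in which the two halves of the induction step are carried out. The paper, matching the recursion built into the free construction ($E_k=\mathrm{hor}(\mathrm{Hom}_{F_{k-1}})$, then $F_k$ the free category on $E_k$), first establishes $H_\infty^{\pi}(H_k)=H^k_C$ from the hypothesis on $V_{k-1}$, and then deduces $V_\infty^{\pi}(V_k)=V^k_C$ from that; you instead pass from the hypothesis on $H_k$ to $V_{k+1}$ and then to $H_{k+1}$. Your stated recursion for the free filtration ($V_{k+1}=\mathrm{vert}(H_k)$, $H_{k+1}=\mathrm{hor}(V_{k+1})$) does not literally match the one used in the construction of $Q_\mathcal{B}$, so when you write up the argument you should align the indexing with the paper's conventions. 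Your added remarks on associators and unitors are harmless but unnecessary: those squares are globular, hence already in $\mathbb{G}\subseteq V_1$, and the paper simply absorbs them silently.
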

	
	\begin{proof}
		Let $\mathcal{B}$ be a decorated bicategory. Let $C$ be a globularly generated double category such that $H^*C=\mathcal{B}$. Let $k$ be a positive integer. We wish to prove that the image of $H_\infty^{\pi}\restriction_{H_k}$ is equal to $H^k_C$ and that the image category of $V_\infty\restriction_{V_k}$ is equal to $V^k_C$ of vertical filtration associated to $C$. We proceed by induction on $k$.
		
		We prove first that $H_\infty^{\pi}H_1$ is equal to $H^1_C$. From the obvious fact that $H^1_C$ is contained in $H_1$, and from the fact that $\pi$ is a double functor, it follows that $H_\infty H^1_C$ is contained in $H^1_C$. Now, $H_\infty^{\pi}$ acts as the identity function when restricted to 2-cells and horizontal identities of $\mathcal{B}$. It follows, from this, from the fact that $H_\infty^{\pi}$ satisfies condition 2 of lemma \ref{lemmainductionpi2}, and from the way $H^1_C$ is defined, that $H_\infty H^1_C$. We conclude that $\pi H_1$ is equal to $H^1_C$. We now prove that the image category of $V_1$ under $V_\infty^{\pi}$ is equal to $V^1_C$. From the previous argument, and from the fact that $V_\infty^{\pi}$ satisfies condition 2 of lemma \ref{lemmainductionpi2} it follows that $V_\infty^{\pi}H_1$ is equal to $H^1_C$. This, together with the fact that $V_\infty^{\pi}$ is a functor, implies that the image category of $V_1^{\pi}$ under $V_\infty^{\pi}$ is precisely $V^1_C$.

		Let now $k$ be a positive integer such that $k>1$. Suppose that for every $m<k$, $H_\infty^{\pi}H_m$ is equal to $H^m_C$ and that the image category of $V_m$, under $V_\infty^{\pi}$, is equal to $V^m_C$. We now prove that $H_\infty^{\pi}H_k$ is $H^k_C$. From the fact that $H_k$ is obviously contained in $H_k$ and from the fact that $\pi$ is a double functor it follows that $H_\infty^{\pi}H_k$ is contained in $H^k_C$. Now, $H_\infty^{\pi}$ satisfies condition 1 of Lemma \ref{lemmainductionpi2}, the induction hypothesis implies that $H_\infty^{\pi}$Hom$_{V_{k-1}}$ is precisely Hom$_{V^{k-1}_C}$. It follows, from this, from the fact that $H_\infty^{\pi}$ satisfies condition 3 of lemma \ref{lemmainductionpi2} and from the fact that every square in $H^k_C$ is the horizontal composition of a composable sequence of squares in Hom$_{V^{k-1}_C}$ that $H_\infty^{\pi}H_k$ contains $H^k_C$. We thus conclude that $H_\infty^{\pi}H_k$ is equal to $H^k_C$. Finally, we prove that the image category, under $V_\infty^{\pi}$, of $V_k$ is precisely $V^k_C$. From the the previous argument, from Observation \ref{observationinductionpi} and from the fact that $V_\infty^{\pi}$ satisfies condition 1 of Lemma \ref{lemmainductionpi2} it follows that the image of $H_k$ under $V_\infty^{\pi}$ is equal to $H^k_C$. This, together with functoriality of $V_k^{\pi}$ implies that the image category of the restriction to $V_k$, of $V_\infty^{\pi}$, is equal to $V^k_C$. This concludes the proof. 
	\end{proof}

	\noindent We now prove the surjectivity part of Theorem \ref{thmprojection1}.

	\

	\noindent \textit{\textbf{Proof:}} Let $\mathcal{B}$ be a decorated bicategory. Let $C$ be a globularly generated double category such that $H^*C=\mathcal{B}$. We wish to prove that $V_\infty^{\pi}$ is full.
	
	Let $k$ be a positive integer. The restriction, to $V_k$ of $V_\infty$ defines, by Lemma \ref{lemmasurjectivity}, a functor from $V_k$ to $V^k_C$. We denote this functor by $\tilde{V}_k^{\pi}$. The fact that $V_\infty^{\pi}$ satisfies condition 1 of Lemma \ref{lemmainductionpi2} implies that for every pair of integers $m,k$ such that $m\leq k$, the functor $\tilde{V}_m^{\pi}$ is equal to the restriction, to $\tilde{V}_m$, of $\tilde{V}_k^{\pi}$. The sequence $\tilde{V}_k^{\pi}$ is thus a directed system in \textbf{Cat}. The functor $V_\infty^{\pi}$ is equal to its limit $\varinjlim \tilde{V}_k^{\pi}$ in \textbf{Cat}. This, together with the fact, following Lemma \ref{lemmasurjectivity}, that $\tilde{V}_k^{\pi}$ is full for every positive integer $k$ completes the proof of the proposition. $\blacksquare$

	\
	
	\noindent\textit{Uniqueness}
	
	\

	\noindent We begin the proof of uniqueness part of Theorem \ref{thmprojection1} by extending the notation used in the above proof.

	\begin{notation}
		Let $\mathcal{B}$ be a decorated bicategory. Let $C$ be a globularly generated double category. Let $T:Q_\mathcal{B}\to C$ be a double functor. Let $k$ be a positive integer. We write $\tilde{H}_k^T$ for $H_k^T\restriction_{H_k}$. Thus defined $\tilde{H}_k^T$ is a function from $H_k$ to $H^k_C$. Moreover, we write $\tilde{V}_k^T$ for $V_k^T\restriction_{V_k}$. Thus defined $\tilde{V}_k^T$ is a functor from $V_k$ to the $k$-th vertical category $V^k_C$ of $C$.
	\end{notation}

	\begin{lem}\label{lemmauniqueness}
		Let $\mathcal{B}$ be a decorated bicategory. Let $C$ be a globularly generated double category. Let $T,L:Q_\mathcal{B}\to C$ be double functors. If $\tilde{H}_1^T$ and $\tilde{H}_1^L$ are equal, then for every $k\geq 1$, $\tilde{H}_k^T$ and $\tilde{H}_k^L$ are equal and $\tilde{V}_k^T$ and $\tilde{V}_k^L$ are equal. 
	\end{lem}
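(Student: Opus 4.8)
The plan is to induct on $k$, mirroring the recursive pattern by which the free horizontal and vertical filtrations $H_k$ and $V_k$ of $Q_\mathcal{B}$ are built. The whole argument runs on a single observation: $T$ and $L$ are double functors, so they preserve both horizontal and vertical composition, together with horizontal identities and the coherence data of the weak horizontal structure. Consequently, once $T$ and $L$ are known to agree on the squares generating a given filtration stage, they are forced to agree on every composite built from those squares, i.e. on the next stage. The only external input is the agreement on $H_1$; every higher stage is generated from it, so the induction should propagate that single hypothesis all the way up.

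For the base case I would note that $\mathbb{G}\subseteq H_1$, since each generating square is a trivial horizontal composite, so the hypothesis $\tilde{H}_1^T=\tilde{H}_1^L$ already yields agreement of $T$ and $L$ on all of $\mathbb{G}$. Since $V_1$ is the category of vertical composites of squares in $\mathbb{G}$ and double functors preserve vertical composition, this gives $\tilde{V}_1^T=\tilde{V}_1^L$. For the inductive step, assume $\tilde{H}_k^T=\tilde{H}_k^L$. Because $V_{k+1}$ is generated, under vertical composition, by the squares of $H_k$, and $T,L$ preserve vertical composition, I obtain $\tilde{V}_{k+1}^T=\tilde{V}_{k+1}^L$; because $H_{k+1}$ is in turn generated, under horizontal composition, by the squares of $V_{k+1}$, and $T,L$ preserve horizontal composition, I obtain $\tilde{H}_{k+1}^T=\tilde{H}_{k+1}^L$. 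This closes the induction and delivers both families of equalities for all $k\geq 1$ ($\tilde{V}_1$ from the base case, $\tilde{V}_{k+1}$ from the step).

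The one point that will require care, rather than routine bookkeeping, is that horizontal composition in $Q_\mathcal{B}$ is only weakly associative and unital: an element of $H_{k+1}$ is a composite of squares of $V_{k+1}$ laced with associators and unitors, so the horizontal step is not a literal application of a strict binary operation, and I must check that $T$ and $L$ treat these coherence squares identically. Here I would be explicit. First, agreement of $T$ and $L$ on $\mathbb{G}$ already forces agreement on every horizontal and vertical $1$-cell of $Q_\mathcal{B}$, since each $1$-cell occurs as a boundary of a generating square (for instance $\alpha$ is the horizontal boundary of the globular square $id_\alpha$, and $f$ is the vertical boundary of $i_f$) and double functors commute with the boundary maps. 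Second, the associators and unitors organizing the weak horizontal structure of $Q_\mathcal{B}$ are themselves $2$-cells of $\mathcal{B}$, hence globular squares lying in $\mathbb{G}\subseteq H_1$, so $T$ and $L$ agree on them by hypothesis. With both the building squares and the coherence squares already secured, preservation of weak horizontal composites by $T$ and $L$ reduces to agreement on data on which they already coincide, and the apparent obstacle dissolves.
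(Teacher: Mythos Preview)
Your argument is correct and follows the same inductive scheme as the paper: use preservation of vertical composition to pass from agreement on a horizontal stage to agreement on the next vertical stage, then preservation of horizontal composition to climb to the next horizontal stage. Two cosmetic points: your indexing is shifted by one relative to the paper's conventions for the free filtration (there $H_k$ is generated by $V_{k-1}$ and $V_k$ by $H_k$, so the flow is $H_1\to V_1\to H_2\to V_2\to\cdots$), and your base case detours through $\mathbb{G}$ unnecessarily, since the hypothesis already gives agreement on all of $H_1$ and $H_1$ generates $V_1$; neither affects the substance, and your explicit handling of the coherence cells is more careful than the paper bothers to be.
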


	\begin{proof}
		Let $\mathcal{B}$ be a decorated bicategory. Let $C$ be a globularly generated double category. Let $T,L:Q_\mathcal{B}\to C$ be double functors. Let $k>1$. Suppose that $\tilde{H}_1^T=\tilde{H}_1^L$. We wish to prove the equations $\tilde{H}_k^T=\tilde{H}_k^L$ and $\tilde{V}_k^T=\tilde{V}_k^L$.
		
		We proceed by induction on $k$. We first prove that $\tilde{V}_1^T=\tilde{V}_1^L$. Observe first that the restriction of the morphism function of $\tilde{V}_1^T$ to $H_1$ is equal to $\tilde{H}_1^T$ and that the restriction of the morphism function of $\tilde{V}_1^L$ to $H_1$ is equal to $\tilde{H}_1^L$. From this and from the assumption of the lemma it follows that the restrictions of the morphism functions of $\tilde{V}_1^T$ and $\tilde{V}_1^L$ to $H_1$ are equal. We conclude, from this, from the fact that $H_1$ generates $V_1$, and from the functoriality of $\tilde{V}_1^T$ and $\tilde{V}_1^L$, that $\tilde{V}_1^T$ and $\tilde{V}_1^L$ are equal. 
		
		Let now $k>1$. Suppose that for every $m<k$ the equations $\tilde{H}_m^T=\tilde{H}_m^L$ and $\tilde{V}_m^T=\tilde{V}_m^L$ hold. We now prove that the equation $\tilde{H}_k^T=\tilde{H}_k^L$ holds. Observe first that the restriction of $\tilde{H}_k^T$ to Hom$_{V_{k-1}^\mathcal{B}}$ is equal to the morphism function of $\tilde{V}_{k-1}^T$ and that the restriction of $\tilde{H}_k^L$ to Hom$_{V_{k-1}^\mathcal{B}}$ is equal to the morphism function of $\tilde{V}_{k-1}^L$. From this, from induction hypothesis, and from the fact that both $T$ and $L$ are double functors that the equation $\tilde{H}_k^T=\tilde{H}_k^L$ holds. We now prove the equation $\tilde{V}_k^T=\tilde{V}_k^L$ holds. Observe again that the restriction of the morphism function of $\tilde{V}_k^T$ to $H_k$ is equal to $\tilde{H}_k^T$ and that the restriction of the morphism function of $\tilde{V}_k^L$ to $H_k$ is equal to $\tilde{H}_k^L$. From this, from the previous argument, from the fact that $H_k^\mathcal{B}$ generates the category $V_k$, and from the functoriality of $\tilde{V}_k^T$ and $\tilde{V}_k^L$ it follows that the equation $\tilde{V}_k^T=\tilde{V}_k^L$ holds. This concludes the proof. 
	\end{proof}

	\noindent Given a double functor $T:Q_\mathcal{B}\to C$ from the free globularly generated double category associated to a decorated bicategory $\mathcal{B}$ to a globularly generated double category $C$, it is a straightforward observation that the morphism functor $T_1$ of $T$ is equal to $\varinjlim \tilde{V}_k^T$ in \textbf{Cat}. This, together with Lemma \ref{lemmauniqueness} implies the following proposition. We interpret this by saying that a double functor with domain a free globularily generated double category is completely determined by its value on globular squares.

	\begin{prop}\label{uniquenessprop}
		Let $\mathcal{B}$ be a decorated bicategory. Let $C$ be a globularly generated double category. Let $T,L:Q_\mathcal{B}\to C$ be double functors. If $\tilde{H}_1^T$ and $\tilde{H}_1^L$ are equal then $T_1$ and $L_1$ are equal.
	\end{prop}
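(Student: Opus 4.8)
The plan is to reduce the statement entirely to Lemma \ref{lemmauniqueness}, using the colimit presentation of the morphism functor announced just before the proposition. The whole argument hinges on the observation that a double functor out of $Q_\mathcal{B}$ is completely reassembled from its restrictions to the stages of the free vertical filtration, so that equality on globular squares propagates, stage by stage, to equality on all squares.

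First I would record why, for an arbitrary double functor $T:Q_\mathcal{B}\to C$, the restriction $\tilde{V}_k^T$ of the morphism functor $T_1$ to $V_k$ factors through $V^k_C$, as already asserted in the relevant notation. Since $T$ is a double functor it carries globular squares to globular squares and horizontal identities to horizontal identities, and it commutes with both horizontal and vertical composition. Because each $V_k$ is obtained from these generators by the prescribed alternating horizontal/vertical composition process defining the free vertical filtration (see \cite{yo2}), and $T$ preserves this process, a routine induction on $k$ yields $T_1(V_k)\subseteq V^k_C$, giving the functors $\tilde{V}_k^T:V_k\to V^k_C$.

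Next I would assemble the colimit presentation. The inclusions $V_k\hookrightarrow V_{k+1}$ form a directed system in \textbf{Cat} whose colimit is the full category of squares $V_\infty$ of $Q_\mathcal{B}$, and by construction the functors $\tilde{V}_k^T$ are the mutually compatible restrictions of $T_1$; hence $T_1=\varinjlim\tilde{V}_k^T$. This is the straightforward observation stated above, and it is here that global generation of $C$, namely $C_1=\varinjlim V^k_C$, is implicitly used so that the target side of the directed system matches up.

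The conclusion is then immediate. Assuming $\tilde{H}_1^T=\tilde{H}_1^L$, Lemma \ref{lemmauniqueness} gives $\tilde{V}_k^T=\tilde{V}_k^L$ for every $k\geq 1$. Equal directed systems have equal colimits, so
\[T_1=\varinjlim\tilde{V}_k^T=\varinjlim\tilde{V}_k^L=L_1.\]
I expect no genuine obstacle at this stage: the substantive inductive bookkeeping has already been carried out in Lemma \ref{lemmauniqueness}, and the only point requiring care is the colimit presentation of $T_1$, which rests purely on functoriality together with the filtration-preservation of double functors.
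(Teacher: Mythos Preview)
Your argument is correct and matches the paper's own proof essentially line for line: the paper observes that $T_1=\varinjlim \tilde{V}_k^T$ in \textbf{Cat} and then invokes Lemma \ref{lemmauniqueness}, which is exactly your reduction. Your added paragraph explaining why $T_1(V_k)\subseteq V^k_C$ is a harmless expansion of what the paper packs into the notation preceding Lemma \ref{lemmauniqueness}.
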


	\noindent The uniqueness part of Theorem \ref{thmprojection1} follows directly from the above proposition. We interpreted the surjectivity part of Theorem \ref{thmprojection1} by saying that every globularly generated internalization of a decorated bicategory $\mathcal{B}$ could be interpreted as a quotient of the free globularly generated double category $Q_\mathcal{B}$ associated to $\mathcal{B}$ via the canonical projection double functor. We interpret the uniqueness part of Theorem \ref{thmprojection1} by saying that in this case the choice of canonical projections as projection is canonical.

	\

	\noindent \textit{Linear canonical double projection}

	\

	\noindent Let $k$ be a field. Let $\mathcal{B}$ be a $k$-linear decorated bicategory. In that case the free globularly generated double category construction can be modified to produce a $k$-linear free globularly generated double category $Q^k_\mathcal{B}$ associated to $\mathcal{B}$, see the final comments of \cite[Section 2]{yo2}. Given $k$-linear decorated bicategories $\mathcal{B},\mathcal{B}'$ we will say that a decorated pseudofunctor $G:\mathcal{B}\to\mathcal{B}'$ is linear if $G$ is linear on 2-cells and vertical arrows of $\mathcal{B}$. It is easily seen that the canonical double projection $\pi^C$ associated to a linear globularly generated double category $C$ satisfying the equation $H^*C=\mathcal{B}$ is a linear pseudofunctor. We will make use of this fact in the next section.

	
	
	


	\section{Applications}\label{sApps}

	\noindent In this section we make use of the canonical double projection to obtain information about solutions to Problem \ref{prob}. We study applications of Theorem \ref{thmprojection1} to length, double groupoids, single 1- and 2-cell decorated bicategories and double categories of von Neumann algebras.
	
	\
	
	\noindent \textit{Length}
	
	\
	
	\noindent Recall that the length of a globularly generated double category $C$, $\ell C$, is the minimal $k\in \mathbb{N}\cup\left\{\infty\right\}$ for which $V^k_C=C_1$. In the non-globularly generated case we define the length of a double category $C$ as $\ell \gamma C$. The length of a double category $C$ is meant to serve as a measure of complexity on the interplay between horizontal and vertical compositions of globular and horizontal squares of $C$. Equivalently $\ell C$ serves as a measure of complexity on presentations of globularly generated squares of $C$. Double categories of arbitrarily large and infinite lengths were constructed in \cite{yo2}. Using the free globularly generated double category construction we translate the definition of length to decorated bicategories. Given a decorated bicategory $\mathcal{B}$ we define the length $\ell \mathcal{B}$ of $\mathcal{B}$ as $\ell Q_\mathcal{B}$. We prove the following proposition.

	\begin{prop}\label{propapplicationlength}
		Let $\mathcal{B}$ be a decorated bicategory. Let $C$ be a double category. If $H^*C=\mathcal{B}$ then the following inequality holds:
		
		\[\ell C\leq \ell\mathcal{B}\]

	\end{prop}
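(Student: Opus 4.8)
The plan is to reduce to the globularly generated case and then transport the collapse of the free vertical filtration of $Q_\mathcal{B}$ across the canonical double projection. First I would dispose of the trivial case: if $\ell\mathcal{B}=\infty$ the asserted inequality holds automatically, so I may assume $\ell\mathcal{B}=k$ for some positive integer $k$. Since $\ell C$ is by definition $\ell\gamma C$, and $\gamma C$ is globularly generated with $H^*\gamma C=H^*C=\mathcal{B}$, I would work throughout with $\gamma C$ in place of $C$; this is precisely the setting in which Theorem~\ref{thmprojection1} and Lemma~\ref{lemmasurjectivity} apply, so the canonical double projection $\pi^{\gamma C}:Q_\mathcal{B}\to\gamma C$ is available, together with its morphism functor $V_\infty^{\pi^{\gamma C}}$.

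Next I would unwind the hypothesis $\ell\mathcal{B}=\ell Q_\mathcal{B}=k$. By definition of length this says that the globularly generated vertical filtration of $Q_\mathcal{B}$ stabilises at stage $k$, i.e. $V^k_{Q_\mathcal{B}}=(Q_\mathcal{B})_1$. Since $Q_\mathcal{B}$ is itself globularly generated, its globularly generated vertical filtration coincides with the free vertical filtration $V_k$ of $Q_\mathcal{B}$; hence $V_k=V_\infty$, the full category of squares of $Q_\mathcal{B}$.

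With this in hand the conclusion is immediate from two facts supplied by the previous results. On the one hand, Lemma~\ref{lemmasurjectivity} identifies the image category of the restriction $V_\infty^{\pi^{\gamma C}}\restriction_{V_k}$ with $V^k_{\gamma C}$. On the other hand, the surjectivity on squares part of Theorem~\ref{thmprojection1} says that $V_\infty^{\pi^{\gamma C}}$ is full, so the image of the whole square functor $V_\infty^{\pi^{\gamma C}}$ is all of $(\gamma C)_1$. Since $V_k=V_\infty$, these two images agree, yielding $V^k_{\gamma C}=(\gamma C)_1$. By minimality in the definition of length this gives $\ell C=\ell\gamma C\leq k=\ell\mathcal{B}$.

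The one step that requires care---and which I expect to be the main obstacle---is the identification in the second paragraph of the free vertical filtration $V_k$ of $Q_\mathcal{B}$ with the globularly generated vertical filtration $V^k_{Q_\mathcal{B}}$ used to define $\ell Q_\mathcal{B}$. This is a matching of two a priori different filtrations on the same globularly generated double category, and verifying it cleanly (presumably by appeal to the construction in \cite{yo2} together with the fact that $Q_\mathcal{B}$ is globularly generated) is what makes the length bound meaningful; everything downstream is then a direct application of square surjectivity combined with Lemma~\ref{lemmasurjectivity}.
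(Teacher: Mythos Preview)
Your argument is correct and follows essentially the same route as the paper: reduce to the globularly generated case, then transport the stabilisation of the filtration on $Q_\mathcal{B}$ across $\pi^{\gamma C}$ using Lemma~\ref{lemmasurjectivity} together with square surjectivity. The paper phrases the endgame slightly differently---it shows $H^{k+1}_C$ is closed under vertical composition by lifting a pair of squares, composing upstairs, and projecting---whereas you argue directly that $V^k_{\gamma C}=(\gamma C)_1$ from $V_k=V_\infty$; your formulation is marginally cleaner but the substance is identical.

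On the point you flag as the main obstacle: the identification of the free vertical filtration $V_k$ of $Q_\mathcal{B}$ with the intrinsic globularly generated filtration $V^k_{Q_\mathcal{B}}$ is exactly \cite[Lemma~2.20]{yo2}, already invoked in the summary of the construction in Section~\ref{s3}. The paper's own proof relies on this same identification implicitly (it passes freely between $H^{k+1}_{Q_\mathcal{B}}$ and the free $H_{k+1}$), so you are not introducing any new assumption---you are simply being explicit about a step the paper leaves tacit.
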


	\begin{proof}
		Let $\mathcal{B}$ be a decorated bicategory. Let $C$ be a double category such that $H^*C=\mathcal{B}$. We wish to prove that $\ell C\leq \ell\mathcal{B}$.

		From the equations $H^*\gamma C=H^*C$ and $\ell\gamma C=\ell C$ we may assume that $C$ is globularly generated. Let $k$ be a positive integer. Suppose $\ell\mathcal{B}=k$. We wish to prove that $\ell C\leq k$. To prove this it is enough to prove that $H^{k+1}_C$ is closed under vertical compositions. Let $\varphi,\psi$ be vertically compatible squares in $H^{k+1}_C$. We wish to prove that $\varphi\bullet\psi\in H^{k+1}_C$. By the fact that $\pi^C$ is surjective on squares the function $H^\pi_{k+1}:H^{k+1}_{Q_\mathcal{B}}\to H^{k+1}_C$ is epic. Let $\varphi',\psi'\in H^{k+1}_{Q_\mathcal{B}}$ such that
		
		\[H^{\pi}_{k+1}\varphi'=\varphi \ \mbox{and} \ H^{\pi}_{k+1}\psi'=\psi\]

		\noindent By the fact that $\pi$ intertwines vertical domain and codomains of $Q_\mathcal{B}$ and $C$ it follows that the squares $\varphi',\psi'$ are vertically compatible. From the fact that $\ell\mathcal{B}=\ell Q_\mathcal{B}= k$ it follows that $\varphi'\bullet_\infty\psi'\in H^{k+1}$ and thus the square:

		\[H^{\pi}_{k+1}(\varphi'\bullet_\infty\psi')=\varphi\bullet\psi\]
		
		\noindent is a square in $H^{k+1}_C$. We conclude that $\ell C\leq k$. The case in which $\ell\mathcal{B}=\infty$ is trivial. This concludes the proof of the proposition.
	\end{proof}

	\noindent An important case of Proposition \ref{propapplicationlength} is when the length of the decorated bicategory $\mathcal{B}$ is assumed to be 1. This is contained in the following immediate corollary.
	
	\begin{cor}\label{corollarylengthapplication}
		Let $\mathcal{B}$ be a decorated bicategory. Suppose $\ell\mathcal{B}=1$. If $C$ is a double category such that $H^*C=\mathcal{B}$ then $\ell C=1$.
	\end{cor}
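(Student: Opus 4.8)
The plan is to obtain the statement as a direct specialization of Proposition \ref{propapplicationlength}, supplemented only by the trivial lower bound on length. First I would apply Proposition \ref{propapplicationlength} to the given internalization: since $C$ satisfies $H^*C=\mathcal{B}$, that proposition yields $\ell C\leq\ell\mathcal{B}$, and by hypothesis $\ell\mathcal{B}=1$, so $\ell C\leq 1$.

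It then suffices to establish the reverse inequality $\ell C\geq 1$. This is forced by the indexing convention for the vertical filtration: by definition $\ell C$ is the least $k$ for which $\gamma C_1=V^k_{\gamma C}$, and the filtration $V^1_{\gamma C}\subseteq V^2_{\gamma C}\subseteq\cdots$ is indexed starting from $k=1$. Hence $\ell C$ cannot be smaller than $1$, so that $\ell C$ takes values in $\{1,2,\dots\}\cup\{\infty\}$. Combining this with $\ell C\leq 1$ gives $\ell C=1$.

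I do not anticipate any genuine obstacle in this argument; the substantive content lies entirely in Proposition \ref{propapplicationlength}, whose proof has already handled the essential point that square-surjectivity of the canonical double projection $\pi^C$ transports closure of $H^{k+1}_{Q_\mathcal{B}}$ under vertical composition down to $H^{k+1}_C$. The only thing requiring a moment's care is recording the lower bound $\ell C\geq 1$ explicitly, so that the inequality $\ell C\leq 1$ can be upgraded to an equality; once the indexing of the vertical filtration is noted, this is immediate, and the corollary follows.
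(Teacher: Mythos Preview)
Your proposal is correct and matches the paper's approach: the paper states this as an ``immediate corollary'' of Proposition \ref{propapplicationlength} with no separate proof, and your argument simply unpacks that immediacy by combining the bound $\ell C\leq\ell\mathcal{B}=1$ with the trivial lower bound coming from the indexing of the vertical filtration. There is nothing more to add.
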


	\noindent Corollary \ref{corollarylengthapplication} says that if we assume that $\ell\mathcal{B}=1$ then we have a good control on expressions of all squares of any globularly generated double category satisfying $H^*C=\mathcal{B}$. More precisely, every square $\varphi$ of a globularly generated double category $C$ satisfying the equation $H^*C=\mathcal{B}$ admits a decomposition as a vertical composition of squares of the following four forms:

	\begin{center}
		
		\begin{tikzpicture}
			\matrix(m)[matrix of math nodes, row sep=4em, column sep=4em,text height=1.5ex, text depth=0.25ex]
			{\bullet&\bullet&\bullet&\bullet\\
				\bullet&\bullet&\bullet&\bullet\\
				\bullet&\bullet&\bullet&\bullet\\
				\bullet&\bullet&\bullet&\bullet\\};
			\path[->,font=\scriptsize,>=angle 90]
			(m-1-1) edge node {} (m-1-2)
			edge [blue]node{} (m-2-1)
			(m-2-1) edge [red]node{} (m-2-2)			
			(m-1-2) edge [blue]node{} (m-2-2)
			(m-1-1) edge [white] node [black][fill=white]{$\varphi$}(m-2-2)

			(m-1-3) edge [red]node {} (m-1-4)
			edge node[left]{$f$} (m-2-3)
			(m-2-3) edge [red]node{} (m-2-4)			
			(m-1-4) edge node[right]{$f$} (m-2-4)
			(m-1-3) edge[white] node [black][fill=white]{$i_f$}(m-2-4)

			(m-3-1) edge [red]node {} (m-3-2)
			edge [blue]node {} (m-4-1)
			(m-4-1) edge [red]node{} (m-4-2)			
			(m-3-2) edge [blue]node {} (m-4-2)
			(m-3-1) edge[white] node [black][fill=white]{$\psi$}(m-4-2)

			(m-3-3) edge [red]node {} (m-3-4)
			edge [blue]node {} (m-4-3)
			(m-4-3) edge node{} (m-4-4)			
			(m-3-4) edge [blue]node {} (m-4-4)
			(m-3-3) edge[white] node [black][fill=white]{$\eta$}(m-4-4);
		\end{tikzpicture}
		
	\end{center}

	\noindent and the horizontal composition of any such squares in $C$ can be re-arranged so as to be written in the above form. The following are examples of decorated bicategories of length 1.

	\

	\begin{enumerate}
		\item \textbf{Groups decorated by groups:} In \cite[Proposition 5.1 ]{yo2} the following equation is proven:

		\[\ell(\Omega G,2\Omega A)=1\]

		\noindent for every pair of groups $G,A$ with $A$ abelian, where recall that $\Omega G$ and $2\Omega G$ are the delooping groupoid and the double delooping 2-group of $G$ respectively, i.e. $\Omega G$ is the groupoid with a single object $\ast$ such that $Aut_{\Omega G}(\ast)=G$ and $2\Omega A$ is the 2-group with a single object, which we also denote by $\ast$, such that $End_{2\Omega A}(\ast)$ is equal to $\Omega A$. Every square $\varphi$ in any globularly generated double category $C$ satisfying the equation $H^*C=(\Omega G,2\Omega A)$ can thus be written as a vertical composition of squares of the form:

		\begin{center}
			
			\begin{tikzpicture}
				\matrix(m)[matrix of math nodes, row sep=4em, column sep=4em,text height=1.5ex, text depth=0.25ex]
				{\ast&\ast&\ast&\ast\\
					\ast&\ast&\ast&\ast\\};
				\path[->,font=\scriptsize,>=angle 90]
				(m-1-1) edge [red]node {} (m-1-2)
				edge [blue]node{} (m-2-1)
				(m-2-1) edge [red]node{} (m-2-2)			
				(m-1-2) edge [blue]node{} (m-2-2)
				(m-1-1) edge [white] node [black][fill=white]{$\xi$}(m-2-2)

				(m-1-3) edge [red]node {} (m-1-4)
				edge node[left]{$g$} (m-2-3)
				(m-2-3) edge [red]node{} (m-2-4)			
				(m-1-4) edge node[right]{$g$} (m-2-4)
				(m-1-3) edge[white] node [black][fill=white]{$i_g$}(m-2-4);
			\end{tikzpicture}
		\end{center}

		\noindent where $\ast$ denotes the only object in $\Omega G$, $\xi$ is an element of the monoid of squares of the only horizontal morphism $i_\ast$ of $C$ and where $g$ is any element of $G$. In Corollary \ref{groupspresentation} we obtain more information about double categories of this form.

		\item \textbf{von Neumann algebras:} In \cite[Proposition 6.1]{yo2} the following equation was proven:

		\[\ell Q^\mathbb{C}_{W^*_{fact}}=1\]

		\noindent where $W^*_{fact}$ denotes the bicategory of factors, Hilbert bimodules and intertwiners, decorated by the category of possibly infinite index unital $^*$-morphisms. Every square in any linear globularly generated double category $C$ satisfying the equation $H^*C=W^*_{fact}$ can thus be written as a multiple of a vertical composition of squares of the form:

		\begin{center}
			
			\begin{tikzpicture}
				\matrix(m)[matrix of math nodes, row sep=4em, column sep=4em,text height=1.5ex, text depth=0.25ex]
				{A&A&A&A&B&B\\
					A&A&B&B&B&B\\};
				\path[->,font=\scriptsize,>=angle 90]
				(m-1-1) edge node [above]{$H$} (m-1-2)
				edge [blue]node{} (m-2-1)
				(m-2-1) edge [red]node{} (m-2-2)			
				(m-1-2) edge [blue]node{} (m-2-2)
				(m-1-1) edge [white] node [black][fill=white]{$\varphi$}(m-2-2)

				(m-1-3) edge [red]node {} (m-1-4)
				edge node[left]{$f$} (m-2-3)
				(m-2-3) edge [red]node{} (m-2-4)			
				(m-1-4) edge node[right]{$f$} (m-2-4)
				(m-1-3) edge[white] node [black][fill=white]{$i_f$}(m-2-4)

				(m-1-5) edge [red]node {} (m-1-6)
				edge [blue]node{} (m-2-5)
				(m-2-5) edge node [below]{$K$} (m-2-6)			
				(m-1-6) edge [blue]node{} (m-2-6)
				(m-1-5) edge[white] node [black][fill=white]{$\psi$}(m-2-6)
				;
			\end{tikzpicture}
		\end{center}

	\end{enumerate}

	\noindent where $A,B$ are factors, $H$ is a left-right $A$-bimodule, $\varphi$ is a bounded intertwiner from $H$ to $L^2(A)$, $f:A\to B$ is a possibly infinite index unital $^*$-morphism, $K$ is a left-right $B$-bimodule, and $\psi$ is a bounded intertwiner from $L^2(B)$ to $K$. In Proposition \ref{canonicalprojectionvnalgebras} we obtain more information of double categories of this form.

	\newpage

	\noindent \textit{2-groupoids and double groupoids}

	\

	\noindent Double groupoids and 2-groupoids categorify crossed modules and are thus used to model homotopy 2-types \cite{BrownSpencer2,MartinezCegarra}. Relations between double groupoids and 2-groupoids have been studied in \cite{BrownSpencer} in the case of edge-symmetric double groupoids with special connection. We apply the results obtained in Section \ref{s3} to study relations between decorated 2-groupoids and general double groupoids. We say that a decorated bicategory $\mathcal{B}$ is a decorated 2-groupoid if $\mathcal{B}$ is a 2-groupoid and $\mathcal{B}^*$ is a groupoid. Decorated bigroupoids are defined analogously. Given a 3-filtered topological space $(X,A,C)$, the pair $(\Pi_1(A;C),W(X;A,C))$, where $W(X;A,C)$ is Moerdijk-Svensson's Whitehead homotopy 2-groupoid associated to $(X,A,C)$ \cite{MoerdijkSvensson} and $\Pi_1(A;C)$ is the fundamental groupoid of $A$ relative to $C$, is a decorated 2-groupoid. The Brown-Higgins fundamental double groupoid $\rho(X;A,C)$ \cite{BrownHiggins} satisfies the equation

	\[H^*\rho(X;A,C)=(\Pi_1(A,C),W(X;A,C))\]

	\noindent Decorated 2-groupoids of the form $(\Pi_1(A;C),W(X;A,C))$ thus always admit solutions to Problem \ref{prob} and these solutions can always be chosen to be double groupoids. A similar statement holds for homotopy 2-groupoids $G_2(X)$ associated to Hausdorff topological spaces $X$ by Hardie, Kamps and Kieboom in \cite{HardieKampsKieboom} decorated by the full fundamental groupoid $\Pi_1(X)$, with internalization provided by the Brown-Hardie-Kamps-Porter homotopy double groupoid $\rho_2^{\square} (X)$ defined in \cite{BrownHardieKampsPorter}.

	Invertibility is perhaps the most essential condition on structures involved in the homotopy hypothesis. In our context it is thus an important question whether every decorated 2-groupoid can always be internalized by a double groupoid. The Brown-Spencer theorem \cite{BrownSpencer} applies in the context of special double groupoids with special connections \cite{BrownSpencer2}, and thus every 2-groupoid $\mathcal{B}$, decorated by its groupoid of horizontal arrows is internalized by a double groupoid, its Ehresmann double category of quintets. We treat the general case of 2-groupoids decorated by groupoids which are not-necessarily groupoids of horizontal arrows. We prove that given a general decorated 2-groupoid (more generally a decorated bigroupoid) $\mathcal{B}$, if there exists a double category $C$ (not-necessarily a double groupoid) such that $H^*C=\mathcal{B}$ then $\gamma C$ is a double groupoid. We begin with the following lemma.

	\begin{prop}\label{propdoublegroupoidfrees}
		Let $\mathcal{B}$ be a decorated bicategory. If $\mathcal{B}$ is a decorated 2-groupoid then $Q_\mathcal{B}$ is a double groupoid.
	\end{prop}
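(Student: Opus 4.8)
The plan is to prove that every square of $Q_\mathcal{B}$ is invertible with respect to both horizontal and vertical composition; combined with the facts that the category of objects of $Q_\mathcal{B}$ equals the groupoid $\mathcal{B}^*$ and that $HQ_\mathcal{B}$ restricts on the generating squares to the bigroupoid $\mathcal{B}$, this is exactly the statement that $Q_\mathcal{B}$ is a double groupoid (horizontal invertibility being understood up to the coherent isomorphisms of the weak horizontal structure). The engine of the argument is the observation that the collection of squares admitting both a horizontal and a vertical inverse is closed under both compositions and contains every horizontal and vertical identity square. Indeed, if $S,S'$ are vertically composable and bi-invertible, then $S\bullet S'$ is vertically invertible through the composite of the vertical inverses and horizontally invertible through the vertical composite of the horizontal inverses, the relevant identities following from the interchange law; the horizontally composable case is symmetric. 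Since $Q_\mathcal{B}$ is globularly generated, every square is a finite composite of globular squares and horizontal identity squares, so it suffices to verify bi-invertibility on these two generating families.

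First I would treat the horizontal identity squares $i_f$. Such a square is by definition a horizontal identity, hence horizontally invertible; for vertical invertibility I would use that vertical composition of these squares realizes composition in $\mathcal{B}^*$, so that $i_{f^{-1}}$ is a two-sided vertical inverse, where $f^{-1}$ is the inverse of $f$ in the groupoid $\mathcal{B}^*$. Next I would treat the globular squares, that is, the $2$-cells $\varphi\colon\alpha\Rightarrow\beta$ of $\mathcal{B}$. Vertical invertibility is immediate: since $\mathcal{B}$ is a $2$-groupoid the $2$-cell $\varphi$ admits an inverse $\varphi^{-1}$ under vertical composition of $2$-cells, and the associated globular square is the vertical inverse of $\varphi$ in $Q_\mathcal{B}$.

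The crux is horizontal invertibility of a globular square $\varphi\colon\alpha\Rightarrow\beta$. Here I would use that, $\mathcal{B}$ being a $2$-groupoid, the $1$-cells $\alpha$ and $\beta$ are equivalences with chosen pseudo-inverses $\bar\alpha,\bar\beta$, and that the invertible $2$-cell $\varphi$ determines, by passing to mates, a $2$-cell $\bar\varphi\colon\bar\alpha\Rightarrow\bar\beta$ whose horizontal composites with $\varphi$ are the relevant horizontal identities up to the coherence isomorphisms of $\mathcal{B}$. Since $\bar\varphi$ is again a globular square it is itself a generator of $Q_\mathcal{B}$, and the witnessing identities hold in $Q_\mathcal{B}$ because horizontal composition of globular squares in $Q_\mathcal{B}$ is exactly the horizontal composition of $2$-cells of $\mathcal{B}$, as enforced by the relation $R_\infty$. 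Once both generating families are shown to be bi-invertible, the closure property forces every square of $Q_\mathcal{B}$ to be bi-invertible, and the proposition follows.

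I expect the main obstacle to be precisely this last step: producing the horizontal inverse of a globular square requires unwinding the bigroupoid structure of $\mathcal{B}$, namely pseudo-inverses of $1$-cells together with the mate of an invertible $2$-cell, and then checking that the required identities survive in $Q_\mathcal{B}$ up to the coherence data transported along $R_\infty$. Some care is also needed in the closure argument to track the asymmetry between the strict vertical composition of $V_\infty$ and the weak horizontal composition of $Q_\mathcal{B}$, so that horizontal inverses are only demanded to be inverse up to coherent isomorphism.
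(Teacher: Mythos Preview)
Your proposal is correct and follows essentially the same route as the paper. The paper argues by induction on the free vertical filtration $V_k$ of $Q_\mathcal{B}$, verifying bi-invertibility of the generators and then propagating it through alternating vertical and horizontal closure; your ``the bi-invertible squares form a sub-double category containing the generators'' is exactly that induction rephrased. The only notable difference is that you spell out the horizontal inverse of a globular square via the mate construction, whereas the paper simply asserts that globular squares are horizontally invertible because $\mathcal{B}$ is a 2-groupoid (the strict case, with the bigroupoid case mentioned afterward as an easy extension); your added care there is appropriate and does not change the structure of the argument.
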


	\begin{proof}
		Let $\mathcal{B}$ be a decorated 2-groupoid. We wish to prove that $Q_\mathcal{B}$ is a double groupoid.
		
		We prove by induction on $k$ that every square $\varphi$ in $V_k$ is vertically and horizontally invertible. By the condition that $\mathcal{B}$ is a decorated 2-groupoid all squares of $Q_\mathcal{B}$ of the form:
		
		\begin{center}
			
			\begin{tikzpicture}
				\matrix(m)[matrix of math nodes, row sep=4em, column sep=4em,text height=1.5ex, text depth=0.25ex]
				{\ast&\ast&\ast&\ast\\
					\ast&\ast&\ast&\ast\\};
				\path[->,font=\scriptsize,>=angle 90]
				(m-1-1) edge [red]node {} (m-1-2)
				edge node[left]{$f$} (m-2-1)
				(m-2-1) edge [red]node{} (m-2-2)			
				(m-1-2) edge node[right]{$f$} (m-2-2)
				(m-1-1) edge [white] node [black][fill=white]{$i_f$}(m-2-2)

				(m-1-3) edge node {} (m-1-4)
				edge [blue]node[left]{} (m-2-3)
				(m-2-3) edge node{} (m-2-4)			
				(m-1-4) edge [blue]node[right]{} (m-2-4)
				(m-1-3) edge[white] node [black][fill=white]{$\varphi$}(m-2-4);
			\end{tikzpicture}
		\end{center}

		\noindent are vertically and horizontally invertible, with the vertical and horizontal inverse of a square on the left-hand side above given by

		\begin{center}
			
			\begin{tikzpicture}
				\matrix(m)[matrix of math nodes, row sep=4em, column sep=4em,text height=1.5ex, text depth=0.25ex]
				{\ast&\ast&\ast&\ast\\
					\ast&\ast&\ast&\ast\\};
				\path[->,font=\scriptsize,>=angle 90]
				(m-1-1) edge [red]node {} (m-1-2)
				edge node[left]{$f^{-1}$} (m-2-1)
				(m-2-1) edge [red]node{} (m-2-2)			
				(m-1-2) edge node[right]{$f^{-1}$} (m-2-2)
				(m-1-1) edge [white] node [black][fill=white]{$i_{f^{-1}}$}(m-2-2)

				(m-1-3) edge [red]node {} (m-1-4)
				edge node[left]{$f$} (m-2-3)
				(m-2-3) edge [red]node{} (m-2-4)			
				(m-1-4) edge node[right]{$f$} (m-2-4)
				(m-1-3) edge[white] node [black][fill=white]{$i_f$}(m-2-4);
			\end{tikzpicture}
		\end{center}

		\noindent respectively. Given any globular or horizontal identity square $\varphi$ in $Q_\mathcal{B}$ we will write $v\varphi^{-1}$ and $h\varphi^{-1}$ for its its vertical and its horizontal inverse in $Q_\mathcal{B}$ respectively. Suppose $\varphi$ is a general square in $V_1$. Write $\varphi$ as a vertical composition of the form $\varphi=\varphi_k\bullet\dots\varphi_1$ where the $\varphi_i's$ are squares of $C$ as above. In that case the vertical inverse of $\varphi$ is given by the composition $v\varphi_{1}^{-1}\bullet\dots\bullet v\varphi_{k}^{-1}$ and the horizontal inverse of $\varphi$ is given by the vertical composition $h\varphi_{k}^{-1}\bullet\dots\bullet h\varphi_{1}^{-1}$. 
		
		Let $n$ be a positive integer such that $n>1$. Suppose that for every $m<n$ every square in $V_m$ is both vertically and horizontally invertible. We prove that every square in $V_n$ is vertically and horizontally invertible. Let $\varphi$ first be a square in $H_n$. Write $\varphi$ as a horizontal composition $\varphi=\varphi_k\ast\dots\ast\varphi_1$ with $\varphi_i$ in $V_{n-1}$. By the induction hypothesis the squares $\varphi_i$ are all vertically and horizontally invertible. We again write $v\varphi^{-1}_i$ and $h\varphi^{-1}_i$ for the horizontal and the vertical inverse of $\varphi_i$ respectively. The vertical inverse $v\varphi^{-1}$ of $\varphi$ is given by the horizontal composition $v\varphi^{-1}_k\ast\dots\ast v\varphi^{-1}_1$ and the horizontal inverse $h\varphi^{-1}$ of $\varphi$ is given by the horizontal composition $h\varphi_1^{-1}\ast\dots\ast h\varphi^{-1}_k$. Thus every square in $H_n$ admits both a horizontal and a vertical inverse. Using this and the same argument used in the previous paragraph every square in $V_n$ is vertically and horizontally invertible. This concludes the proof.
	\end{proof}


	\begin{cor}\label{cordoublegroupoid}
		Let $C$ be a double category. If $H^*C$ is a decorated 2-groupoid then $\gamma C$ is a double groupoid.
	\end{cor}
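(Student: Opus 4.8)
The plan is to deduce the statement from Proposition \ref{propdoublegroupoidfrees} by pulling invertibility back along the canonical double projection. First I would set $\mathcal{B}=H^*C$, which is a decorated 2-groupoid by hypothesis. Since $\gamma C$ is globularly generated and satisfies $H^*\gamma C=H^*C=\mathcal{B}$, Theorem \ref{thmprojection1} applies with $\gamma C$ in place of $C$ and yields the canonical double projection $\pi^{\gamma C}:Q_\mathcal{B}\to\gamma C$, a strict double functor that is surjective on squares. By Proposition \ref{propdoublegroupoidfrees}, $Q_\mathcal{B}$ is a double groupoid, so every square of $Q_\mathcal{B}$ is both vertically and horizontally invertible.

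The core step is to transport this invertibility onto $\gamma C$. Given an arbitrary square $\psi$ of $\gamma C$, surjectivity on squares provides a square $\varphi$ of $Q_\mathcal{B}$ with $\pi^{\gamma C}\varphi=\psi$. Since $Q_\mathcal{B}$ is a double groupoid, $\varphi$ admits a vertical inverse $v\varphi^{-1}$ and a horizontal inverse $h\varphi^{-1}$. Because $\pi^{\gamma C}$ is a strict double functor, it preserves horizontal and vertical compositions as well as globular and horizontal identity squares; applying $\pi^{\gamma C}$ to the defining equations of $v\varphi^{-1}$ and $h\varphi^{-1}$ shows that $\pi^{\gamma C}(v\varphi^{-1})$ is a vertical inverse of $\psi$ and $\pi^{\gamma C}(h\varphi^{-1})$ is a horizontal inverse of $\psi$. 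As $\psi$ was arbitrary, every square of $\gamma C$ is both vertically and horizontally invertible.

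Finally I would verify the edge conditions needed for the conclusion. The vertical arrow category of $\gamma C$ is $(\gamma C)_0=C_0=\mathcal{B}^*$, which is a groupoid, and the horizontal bicategory $H\gamma C=HC=\mathcal{B}$ is a 2-groupoid; both follow from the hypothesis that $H^*C$ is a decorated 2-groupoid together with the identity $H^*\gamma C=H^*C$. Combining the groupoid structure on the edges with the square invertibility established in the previous paragraph gives that $\gamma C$ is a double groupoid.

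I expect the main obstacle to be purely a matter of careful bookkeeping rather than a genuine difficulty: confirming that a strict double functor sends an inverse square to an inverse square, which rests on its preservation of both compositions and of the relevant identities, and matching this to the working notion of double groupoid used in Proposition \ref{propdoublegroupoidfrees}, so that square invertibility together with the inherited edge-groupoid structure is exactly what must be checked.
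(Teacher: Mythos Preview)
Your proposal is correct and follows essentially the same approach as the paper: the paper's proof simply states that it suffices to show every square of $\gamma C$ is vertically and horizontally invertible, and that this follows directly from Proposition \ref{propdoublegroupoidfrees} and Theorem \ref{thmprojection1}. Your write-up is just a more explicit unpacking of that same two-line argument, lifting along the surjective canonical double projection and pushing inverses forward through the strict double functor.
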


	\begin{proof}
		Let $C$ be a double category. Suppose that $H^*C$ is a decorated 2-groupoid. We wish to prove that $\gamma C$ is a double groupoid.
		
		It is enough to prove that every square in $\gamma C$ is both vertically and horizontally invertible. This follows directly from Proposition \ref{propdoublegroupoidfrees} and Theorem \ref{thmprojection1}. This concludes the proof of the corollary.
	\end{proof}
	
	\noindent Observe that Proposition \ref{propdoublegroupoidfrees} and Corollary \ref{cordoublegroupoid} still hold if we assume that $\mathcal{B}$ is a decorated bigroupoid. The following corollary follows directly from Proposition \ref{propapplicationlength}, Proposition \ref{propdoublegroupoidfrees}, and \cite[Corollary 5.2]{yo2} by considering decorated 2-groupoids of the form $(\Omega G,2\Omega A)$ whith $G,A$ groups and $A$ abelian.
	
	\begin{cor}\label{groupspresentation}
		Let $G,A$ be groups. Suppose $A$ is abelian. Let $C$ be a globularly generated double category. If $H^*C=(\Omega G,2\Omega A)$ then the category of squares $C_1$ is of the form $\Omega H$ for a group $H$ such that $H$ is a quotient of $G\ast A$. 
	\end{cor}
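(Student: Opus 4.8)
The plan is to determine the shape of $C_1$ directly from the hypotheses and then to realize the group $H$ as a quotient of $G\ast A$ by transporting the known square group of the free model along the canonical double projection. The starting observation is that, since $G$ and $A$ are groups, $(\Omega G,2\Omega A)$ is a decorated $2$-groupoid: $\Omega G$ is a groupoid, and $2\Omega A$ is a $2$-groupoid because its unique $1$-cell is invertible and its $2$-cells form the group $A$. Because $C$ is globularly generated we have $\gamma C=C$, so Corollary \ref{cordoublegroupoid} applies and $C$ is a double groupoid; in particular every square of $C$ is vertically invertible.

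First I would read the object structure of $C_1$ off the equation $H^*C=(\Omega G,2\Omega A)$. The horizontal bicategory $HC=2\Omega A$ has a single $1$-cell, so $C_1$ has exactly one object. Together with the vertical invertibility of all squares obtained above, this exhibits $C_1$ as a one-object groupoid, that is $C_1=\Omega H$ with $H$ the group of squares of $C$ under vertical composition. This already establishes the first assertion of the corollary and reduces the task to identifying $H$ as a quotient of $G\ast A$.

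For the quotient statement I would compare $C$ with the free model $Q_{(\Omega G,2\Omega A)}$. By Proposition \ref{propdoublegroupoidfrees} the latter is a double groupoid, and by \cite[Corollary 5.2]{yo2} its category of squares is $\Omega(G\ast A)$, so its square group is exactly $G\ast A$. Theorem \ref{thmprojection1} furnishes a strict double functor $\pi^C:Q_{(\Omega G,2\Omega A)}\to C$ which is surjective on squares and which, by the identity $H^*\pi^C\restriction_{(\Omega G,2\Omega A)}=id$, fixes the unique horizontal $1$-cell. Restricting $\pi^C$ to categories of squares then produces a functor $\Omega(G\ast A)\to\Omega H$ between one-object groupoids; functoriality makes it a group homomorphism, and its surjectivity on the single hom-set (coming from square-surjectivity of $\pi^C$) makes it an epimorphism of groups. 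Hence $H$ is isomorphic to a quotient of $G\ast A$, as desired.

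The step demanding the most care is the last translation: one must check that the square-surjectivity of the double functor $\pi^C$ descends to surjectivity of the induced map on the unique hom-set of $C_1$, and that vertical composition in $C$ is carried to the multiplication of $H$ so that this map is genuinely a group homomorphism. Both points become routine once one records that $\pi^C$ preserves the single object of each square category, which is exactly what the condition $H^*\pi^C\restriction_{(\Omega G,2\Omega A)}=id$ guarantees. Proposition \ref{propapplicationlength} enters in a supporting role: combined with $\ell(\Omega G,2\Omega A)=1$ from \cite[Proposition 5.1]{yo2} it forces $\ell C=1$, which yields the concrete description of the squares of $C$ as the vertical composites displayed above, even though it is not strictly required for the abstract group identification.
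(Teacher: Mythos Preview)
Your argument is correct and follows essentially the same route the paper has in mind: use the double-groupoid result for decorated $2$-groupoids to force $C_1$ to be a one-object groupoid $\Omega H$, invoke \cite[Corollary 5.2]{yo2} to identify the square group of $Q_{(\Omega G,2\Omega A)}$ with $G\ast A$, and then use the canonical double projection of Theorem \ref{thmprojection1} to produce the surjective group homomorphism $G\ast A\to H$.

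The only cosmetic difference is in packaging. The paper cites Proposition \ref{propdoublegroupoidfrees} directly (together with Proposition \ref{propapplicationlength} and \cite[Corollary 5.2]{yo2}), whereas you go through Corollary \ref{cordoublegroupoid}, which is of course just Proposition \ref{propdoublegroupoidfrees} pushed through $\pi^C$. Your observation that Proposition \ref{propapplicationlength} is not strictly necessary for the bare group-theoretic conclusion is accurate; the paper seems to list it because the length-$1$ description motivates and sharpens the statement (every square being a vertical word in the $i_g$ and the $a\in A$), but the quotient identification already follows from square-surjectivity of $\pi^C$ alone, exactly as you argue.
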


	\

	\noindent \textit{von Neumann algebras}

	\

	\noindent We study linear double categories of von Neumann algebras and their Hilbert bimodules. In \cite{Bartels1} a tensor double category of semisimple von Neumann algebras, Hilbert bimodules, equivariant intertwiners and finite index morphisms was constructed in order to express the fact that the Haagerup standard form and the Connes fusion operation admit compatible extensions to tensor functors. In \cite{yo2} it was proven that the bicategory of factors, Hilbert bimodules, and intertwiners, decorated by possibly infinite index morphisms is saturated and thus its linear free globularly generated double category is an internalization, providing formal linear functorial extensions of both the Haagerup standard form and the Connes fusion operations. We investigate the relation of these two constructions through the canonical double projection and we use this to construct a linear extension of the double category of factors and finite morphisms accommodating possibly infinite index morphisms.
	
	\
	
	\noindent We write \textbf{Mod}$^{fact}$ for the linear bicategory whose 2-cells are of the form:

	\begin{center}
		
		\begin{tikzpicture}
			\matrix(m)[matrix of math nodes, row sep=4em, column sep=4em,text height=1.5ex, text depth=0.25ex]
			{A&B\\};
			\path[->,font=\scriptsize,>=angle 90]
			(m-1-1) edge [bend right=45]node [below]{$H$} (m-1-2)
			edge [bend left=45] node [above]{$K$} (m-1-2)
			edge [white]node[black][fill=white]{$\varphi$} (m-1-2)

			;
		\end{tikzpicture}
	\end{center}

	\noindent where $A,B$ are factors, $H,K$ are left-right Hilbert $A$-$B$-bimodules and where $\varphi$ is an intertwiner operator from $H$ to $K$. Horizontal identity 2-cells in \textbf{Mod}$^{fact}$ are given by 2-cells of the form:

	\begin{center}
		
		\begin{tikzpicture}
			\matrix(m)[matrix of math nodes, row sep=4em, column sep=4em,text height=1.5ex, text depth=0.25ex]
			{A&A\\};
			\path[->,font=\scriptsize,>=angle 90]
			(m-1-1) edge [red,bend right=45]node[black][below]{$L^2(A)$} (m-1-2)
			edge [red,bend left=45] node[black][above]{$L^2(A)$} (m-1-2)
			edge [white]node[black][fill=white]{$id_{L^2(A)}$} (m-1-2)

			;
		\end{tikzpicture}
	\end{center}

	\noindent where $A$ is a factor and $L^2(A)$ is the Haagerup standard form of $A$, see \cite{Haagerup}. Given horizontally compatible 2-cells in \textbf{Mod}$^{fact}$ of the form:

	\begin{center}
		
		\begin{tikzpicture}
			\matrix(m)[matrix of math nodes, row sep=4em, column sep=4em,text height=1.5ex, text depth=0.25ex]
			{A&B&C\\};
			\path[->,font=\scriptsize,>=angle 90]
			(m-1-1) edge [bend right=45]node [below]{$H$} (m-1-2)
			edge [bend left=45] node [above]{$K$} (m-1-2)
			edge [white]node[black][fill=white]{$\varphi$} (m-1-2)
			(m-1-2) edge [bend right=45]node [below]{$H'$} (m-1-3)
			edge [bend left=45] node [above]{$K'$} (m-1-3)
			edge [white]node[black][fill=white]{$\varphi'$} (m-1-3)
			
			;
		\end{tikzpicture}
	\end{center}

	\noindent their horizontal composition is provided by the 2-cell:
	
	\begin{center}
		
		\begin{tikzpicture}
			\matrix(m)[matrix of math nodes, row sep=4em, column sep=4em,text height=1.5ex, text depth=0.25ex]
			{A&C\\};
			\path[->,font=\scriptsize,>=angle 90]
			(m-1-1) edge [bend right=45]node [below]{$H\boxtimes_BH'$} (m-1-2)
			edge [bend left=45] node [above]{$K\boxtimes_BK'$} (m-1-2)
			edge [white]node[black][fill=white]{$\varphi\boxtimes_B\varphi'$} (m-1-2)

			;
		\end{tikzpicture}
	\end{center}

	\noindent where $H\boxtimes_BH'$ and $K\boxtimes_BK'$ denote the Connes fusion of $H,H'$ and $K,K'$ and where $\varphi\boxtimes_B\varphi'$ denotes the Connes fusion of $\varphi$ and $\varphi'$. We write \textbf{vN}$^{fact}$ for the category of factors and unital $^\ast$-morphisms $f:A\to B$ with $[f(A),B]$ possibly infinite. We write $\mbox{\textbf{vN}}^{fin}$ for the subcategory of \textbf{vN}$^{fact}$ generated by $^*$-morphisms $f:A\to B$ such that $[f(A),B]<\infty$. The pairs $(\mbox{\textbf{vN}}^{fact},\mbox{\textbf{Mod}}^{fact})$ and $(\mbox{\textbf{vN}}^{fin},\mbox{\textbf{Mod}}^{fin})$ are linear decorated bicategories. We write $W^*_{fact}$ and $W^*_{fin}$ for these decorated bicategories. In \cite{Bartels1} an internalization of $W^*_{fin}$ is constructed through functorial extensions, to \textbf{vN}$^{fin}$ of the Haagerup standard form construction and the Connes fusion operation construction. We write $BDH$ for this double category. In \cite{yo2} the author proves that $W^*_{fact}$ and thus $W^*_{fin}$ are saturated, i.e. $H^*Q_{W^*_{fact}}=W^*_{fact}$ and $H^*Q_{W^*_{fin}}=W^*_{fin}$. The exact relation between $BDH$ and $Q_{W^*_{fact}}$ is provided by the canonical projection. We have the following consequence of \ref{thmprojection1}.
	
	\begin{prop}\label{canonicalprojectionvnalgebras}
		$\gamma BDH$ is a double quotient of $Q_{W^*_{fact}}$ through $\pi^{BDH}$.
	\end{prop}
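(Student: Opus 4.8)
The statement is meant to be an immediate application of Theorem~\ref{thmprojection1} once the decorated horizontalization of $BDH$ is identified, so the plan is to reduce it to that theorem and to isolate the one piece of input that is genuinely about von Neumann algebras. First I would recall that, by the coreflection property of $\gamma$ recorded in the introduction, decorated horizontalization is constant on $\gamma$-fibers, so that $H^*\gamma BDH=H^*BDH$, while $\gamma BDH$ is globularly generated by the very definition of the $\gamma$ construction. Thus $\gamma BDH$ is a globularly generated double category whose decorated horizontalization agrees with that of $BDH$, and it is eligible to serve as the target $C$ in Theorem~\ref{thmprojection1}.

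The substantive step is to verify the equation $H^*BDH=W^*_{fact}$, i.e. that $BDH$ is an internalization of $W^*_{fact}$ in the sense of Problem~\ref{prob}. Concretely this amounts to checking that the horizontal bicategory of $BDH$ is $\mbox{\textbf{Mod}}^{fact}$ and that its category of objects is $\mbox{\textbf{vN}}^{fact}$, i.e. that the globular squares of $BDH$ are exactly the intertwiners and that its vertical arrows are exactly the unital $^*$-morphisms of factors. This is the only place where the concrete Bartels--Douglas--H\'enriques construction of \cite{Bartels1}, together with the saturation result $H^*Q_{W^*_{fact}}=W^*_{fact}$ proved in \cite{yo2}, enters, and I expect it to be the main obstacle: it requires matching the square data of $BDH$ against the decorated $2$-cells of $W^*_{fact}$ and, in particular, keeping careful track of the index bookkeeping so that the relevant vertical arrow category is the full category $\mbox{\textbf{vN}}^{fact}$ and not merely its finite-index subcategory $\mbox{\textbf{vN}}^{fin}$.

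Granting $H^*BDH=W^*_{fact}$, I would then invoke Theorem~\ref{thmprojection1} with $\mathcal{B}=W^*_{fact}$ and $C=\gamma BDH$ to obtain the canonical double projection $\pi^{BDH}\colon Q_{W^*_{fact}}\to\gamma BDH$, which the theorem guarantees to be a strict double functor restricting to the identity on $W^*_{fact}$ and surjective on squares; by the linear refinement discussed in the subsection on the linear canonical double projection, $\pi^{BDH}$ is moreover linear. Finally I would read off the conclusion: a strict linear double functor which is the identity on objects, on horizontal arrows and on vertical arrows and which is surjective on squares exhibits its target as a double quotient of its source, so $\gamma BDH$ is precisely the double quotient of $Q_{W^*_{fact}}$ determined by the congruence $\ker\pi^{BDH}$. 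The uniqueness clause of Theorem~\ref{thmprojection1}, spelled out in Proposition~\ref{uniquenessprop}, then makes this presentation of $\gamma BDH$ canonical, which is exactly the assertion of the proposition.
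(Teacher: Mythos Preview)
Your overall strategy---reduce to Theorem~\ref{thmprojection1} after identifying $H^*\gamma BDH$---matches the paper, which presents the proposition simply as ``the following consequence of'' that theorem with no further proof. However, your ``substantive step'' contains a genuine error. You assert that one must verify $H^*BDH=W^*_{fact}$ and that careful index bookkeeping should show the vertical arrow category is all of $\mbox{\textbf{vN}}^{fact}$. But the paper is explicit that $BDH$ is the Bartels--Douglas--H\'enriques double category whose vertical morphisms are the \emph{finite}-index unital $^*$-morphisms: in the paragraph immediately preceding the proposition it states that $BDH$ is an internalization of $W^*_{fin}$, i.e.\ $H^*BDH=W^*_{fin}$, not $W^*_{fact}$. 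The index bookkeeping you flag as the main obstacle therefore leads to the opposite conclusion from the one you expect, and the equation you set out to verify is simply false.

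Consequently the canonical projection supplied directly by Theorem~\ref{thmprojection1} is a double functor $Q_{W^*_{fin}}\to\gamma BDH$, with source $Q_{W^*_{fin}}$ rather than $Q_{W^*_{fact}}$. The proposition as printed---with $Q_{W^*_{fact}}$---thus either contains a misprint for $Q_{W^*_{fin}}$ (in which case your argument, with step~2 corrected to the equation $H^*BDH=W^*_{fin}$, is exactly what the paper intends), or it is silently invoking the inclusion $Q_{W^*_{fin}}\hookrightarrow Q_{W^*_{fact}}$ used explicitly in the proof of the next proposition. In either reading, your proof does not go through as written, because it rests on an identification of $H^*BDH$ that contradicts the paper's own definition of $BDH$.
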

	
	\noindent The category of squares $BDH_1$ of $BDH$ is the category whose objects and morphisms are Hilbert bimodules over factors and finite index equivariant intertwiners, i.e. the morphisms of $BDH$ are the squares of the form:

	\begin{center}
		
		\begin{tikzpicture}
			\matrix(m)[matrix of math nodes, row sep=4em, column sep=4em,text height=1.5ex, text depth=0.25ex]
			{A&B\\
				A'&B'\\};
			\path[->,font=\scriptsize,>=angle 90]
			(m-1-1) edge node[above]{$H$} (m-1-2)
			edge node[left]{$f$} (m-2-1)
			(m-2-1) edge node[below]{$H'$} (m-2-2)			
			(m-1-2) edge node[right]{$g$} (m-2-2)
			(m-1-1) edge [white] node [black][fill=white]{$(f,\varphi,g)$}(m-2-2) 
			
			;
		\end{tikzpicture}
	\end{center}

	\noindent where $A,A',B,B'$ are factors, $H$ is a left-right $A,B$-Hilbert bimodule, $H'$ is a left-right $A',B'$-Hilbert bimodule, $f:A\to A'$ and $g:B\to B'$ are unital $^*$-morphisms satisfying the inequalities
	
	\[[f(A),A']<\infty \ \mbox{and} \ [g(B),B']<\infty\]

	\noindent and $\varphi$ is a bounded operator from $H$ to $K$ satisfying the equation
	
	\[\varphi(a\xi b)=f(a)\varphi(\xi)g(b)\]

	\noindent for every $\xi\in H$ and $a\in A,b\in B$. In \cite{yo1} the category of squares of $\gamma BDH_1$ of $\gamma BDH$ was computed as the category of 2-subcyclic equivariant intertwiners. The fact that $BDH$ is a tensor double category means that there exists a tensor functor
	
	\[L^2:\mbox{\textbf{vN}}^{fin}\to BDH_1\]

	\noindent associating to every factor $A$ the Haagerup standar form $L^2(A)$ of $A$, and a tensor functor
	
	\[\boxtimes_\bullet:BDH_1\times_{\mbox{\textbf{vN}}^{fin}}BDH_1\to BDH_1\]

	\noindent associating to every compatible pair of squares $_AH_B,_BK_A$ its Connes fusion $_AH\boxtimes_B K_C$. The fact that these functors are compatible is expressed by the fact that $BDH$ is a tensor double category. The fact that these are operations on Hilbert bimodules and finite equivariant intertwiners is expressed by the equation

	\[H^*BDH=W^*_{fin}\]

	\noindent This equation is minimized by $\gamma BDH$ and the image category of the $L^2$-functor above is in $\gamma BDH_1$. We are thus interested in extending the functors
	
	\[L^2:\mbox{\textbf{vN}}^{fin}\to \gamma BDH_1\]

	\noindent and 
	
	\[\boxtimes_\bullet:\gamma BDH_1\times_{\mbox{\textbf{vN}}^{fin}}\gamma BDH_1\to \gamma BDH_1\]

	\noindent to compatible functors on \textbf{vN}$^{fact}$. The following proposition does this.

	\begin{prop}
		There exists a linear double category $\tilde{BDH}$ such that $H^*\tilde{BDH}=W^*_{fact}$ and such that $\gamma BDH$ is a sub-double category of $\tilde{BDH}$ satisfying the following condition: Given vertically or horizontally compatible squares $\varphi,\psi$ in $\tilde{BDH}$ if the vertical or horizontal composition of $\varphi$ and $\psi$ is in $BDH$ so are $\varphi$ and $\psi$.
	\end{prop}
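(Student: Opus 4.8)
The plan is to realize $\tilde{BDH}$ as a pushout that glues the concrete finite-index data of $\gamma BDH$ onto the formal infinite-index data of $Q_{W^*_{fact}}$, and then to control the resulting congruence by a single grading functor recording finiteness of index. Write $\sim$ for the kernel congruence of the canonical double projection $\pi^{BDH}$ associated to $BDH$; since $H^*BDH=W^*_{fin}$ its domain is $Q_{W^*_{fin}}$ and $\gamma BDH=Q_{W^*_{fin}}/\!\sim$ by Proposition \ref{canonicalprojectionvnalgebras}. Let $j:Q_{W^*_{fin}}\to Q_{W^*_{fact}}$ be the inclusion induced by the inclusion of decorated bicategories $W^*_{fin}\subseteq W^*_{fact}$; concretely $j$ is injective onto the sub-double category generated by the finite-index generators, since the generating squares and the relation $R_\infty$ for $W^*_{fin}$ are the restrictions of those for $W^*_{fact}$. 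I would then set
\[\tilde{BDH}:=Q_{W^*_{fact}}\sqcup_{Q_{W^*_{fin}}}\gamma BDH,\]
the pushout in \textbf{dCat} of $\pi^{BDH}$ along $j$, with legs $\iota_1:\gamma BDH\to\tilde{BDH}$ and $\iota_2:Q_{W^*_{fact}}\to\tilde{BDH}$; equivalently $\tilde{BDH}=Q_{W^*_{fact}}/\tilde{\sim}$, where $\tilde{\sim}$ is the smallest double congruence containing $j(\sim)$. Linearity is inherited from $Q_{W^*_{fact}}$ since, as noted at the end of Section \ref{s3}, $\pi^{BDH}$ is linear and the generators of $\tilde{\sim}$ are linear relations.

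Next I would verify $H^*\tilde{BDH}=W^*_{fact}$. Since $W^*_{fact}$ is saturated we have $H^*Q_{W^*_{fact}}=W^*_{fact}$, so it suffices to show that passing to $Q_{W^*_{fact}}/\tilde{\sim}$ leaves the decorated horizontalization untouched, i.e. that $\iota_2$ is bijective on objects, horizontal $1$-cells and globular squares. Objects and horizontal $1$-cells are unaffected because $\tilde{\sim}$ is a congruence on squares; for globular squares one reduces to the fact that $\sim$ is already trivial there, which holds because $W^*_{fin}$ is saturated and hence $H^*\pi^{BDH}\restriction_{W^*_{fin}}=id_{W^*_{fin}}$ by Theorem \ref{thmprojection1}. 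Whiskering a finite-index relation of $\sim$ into a globular-to-globular identification forces all whiskers to be globular, and on globular squares $\sim$ is an equality, so no new identification of globular squares is created.

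The heart of the argument is a grading separating the finite-index part from the rest. Let $\mathcal{T}$ be the doubly-degenerate double category with a single object, trivial horizontal and vertical $1$-morphisms, and set of squares the commutative monoid $M=\{fin,\infty\}$ in which $\infty$ is absorbing and both compositions are multiplication. I would define a double functor $\tau:Q_{W^*_{fact}}\to\mathcal{T}$ on generators by sending every globular square to $fin$ and sending a horizontal identity $i_f$ to $fin$ when $[f(A),A']<\infty$ and to $\infty$ otherwise. The only nontrivial point in checking that $\tau$ descends through $R_\infty$ is the relation $i_{\beta\alpha}=i_\beta\bullet i_\alpha$: here one uses that the index of $^{*}$-morphisms is multiplicative and bounded below by $1$, so $\beta\alpha$ has finite index if and only if both $\beta$ and $\alpha$ do, whence $\tau(i_{\beta\alpha})=\tau(i_\beta)\tau(i_\alpha)$; the remaining relations are coherence constraints among globular and identity squares and are respected because $M$ is commutative. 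As every relation generating $\tilde{\sim}$ relates finite-index squares, $\tau$ is constantly $fin$ on $j(\sim)$ and descends to $\bar{\tau}:\tilde{BDH}\to\mathcal{T}$.

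Finally I would read off both remaining claims from $\bar{\tau}$. A square of $\tilde{BDH}$ lies in $\iota_1(\gamma BDH)$ exactly when it admits a representative assembled from finite-index generators, which by absorptivity of $\infty$ is precisely the condition $\bar{\tau}=fin$; thus $\iota_1(\gamma BDH)=\bar{\tau}^{-1}(fin)$. For injectivity of $\iota_1$, any $\tilde{\sim}$-zigzag between two finite-index squares stays on the fibre $\bar{\tau}^{-1}(fin)$ because $\bar{\tau}$ is invariant along $\tilde{\sim}$; each elementary step is then a whiskering of a $\sim$-relation by finite-index squares and so already holds inside $Q_{W^*_{fin}}$, forcing the two squares to be $\sim$-equivalent, so $\iota_1$ is a monomorphism. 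The downward-closure condition is now immediate: if $\varphi\bullet\psi$ (resp. $\varphi\ast\psi$) lies in $\gamma BDH$ then $\bar{\tau}(\varphi)\bar{\tau}(\psi)=fin$ in $M$, and since $\infty$ is absorbing this forces $\bar{\tau}(\varphi)=\bar{\tau}(\psi)=fin$, so both $\varphi$ and $\psi$ lie in $\gamma BDH$. I expect the main obstacle to be the sub-double category claim rather than the closure condition: pushouts in \textbf{dCat} need not preserve monomorphisms, so the entire weight of the argument falls on constructing the grading $\bar{\tau}$ correctly---in particular on the compatibility of $\tau$ with $R_\infty$ through multiplicativity of the index---so that a single functor both certifies injectivity of $\iota_1$ and witnesses the closure property.
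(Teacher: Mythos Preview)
Your construction of $\tilde{BDH}$ coincides with the paper's: the pushout $Q_{W^*_{fact}}\sqcup_{Q_{W^*_{fin}}}\gamma BDH$ is exactly the quotient $Q_{W^*_{fact}}/R$ the paper takes, where $R$ is the congruence generated on $Q_{W^*_{fact}}$ by the kernel pair of $\pi^{\gamma BDH}$. So at the level of the object being built there is no difference.

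Where you diverge is in the verification of the embedding and the closure property. The paper identifies $\gamma BDH$ inside $\tilde{BDH}$ via the equality $Q_{W^*_{fin}}=(\pi^{\gamma BDH})^{-1}(\gamma BDH)$ and then disposes of the closure condition by an induction on $\min\{\ell\varphi,\ell\psi\}$, invoking directly the arithmetic fact that if $[gf(A),C]<\infty$ then $[f(A),B],[g(B),C]<\infty$. You instead encode that same arithmetic fact once and for all into a grading double functor $\tau:Q_{W^*_{fact}}\to\mathcal{T}$ with values in the absorbing two-element monoid $\{fin,\infty\}$; multiplicativity of the index is precisely what makes $\tau$ respect the relation $i_{\beta\alpha}\sim i_\beta\bullet i_\alpha$ in $R_\infty$. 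Once $\bar\tau$ exists, the identification $\iota_1(\gamma BDH)=\bar\tau^{-1}(fin)$ yields the embedding and the downward-closure statement simultaneously, with no separate induction needed. This is a genuinely cleaner packaging: what the paper does step by step along the vertical filtration, your grading functor does in one stroke. Conversely, the paper's induction avoids having to check that $\tau$ is compatible with \emph{all} of $R_\infty$ (you wave this away for the coherence relations with ``$M$ is commutative'', which is correct but terse), and it sidesteps the general worry you correctly flag about pushouts in $\mathbf{dCat}$ not preserving monos. Both arguments rest on the identical index fact; neither is deeper, and your grading device is a legitimate and somewhat more conceptual alternative to the paper's length induction.
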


	\begin{proof}
		We wish to prove that there exists a linear double category $\tilde{BDH}$ satisfying the equation $H^*\tilde{BDH}=W^*_{fact}$ and having $\gamma BDH$ as sub-double category in such a way that given every pair of squares $\varphi,\psi$ in $\tilde{BDH}$ such that either the vertical or the horizontal composition of $\varphi$ and $\psi$ is a square in $BDH$ then both $\varphi$ and $\psi$ are in $BDH$.
		
		Write $R$ for the equivalence relation on the collection of squares of $Q_{W^*_{fact}}$ defined as: $\varphi R\psi$ if $\pi^{\gamma BDH}\varphi=\pi^{\gamma BDH}\psi$. Thus defined $R$ is compatible with the vertical and horizontal structure data of $Q_{W^*_{fact}}$ and $\gamma BDH$ and thus $Q_{W^*_{fact}}/R$ is a globularly generated double category. We make $\tilde{BDH}$ to be this double category. From the equation $H^*Q_{W^*_{fact}}=W^*_{fact}$ the equation $H^*\tilde{BDH}=W^*_{fact}$ follows. $Q_{W^*_{fin}}$ is a sub-double category of $Q_{W^*_{fact}}$. Moreover, it is easily seen that the equtation

		\[Q_{W^*_{fin}}=\pi^{\gamma BDH-1}(\gamma BDH)\]

		\noindent holds. We thus obtain an isomorphism of double categories

		\[\pi^{\gamma BDH}\restriction_{Q_{W^*_{fin}}}\cong \gamma BDH\]

		\noindent We make use of the above isomorphism to identify $\gamma BDH$ with a sub-double category of $\tilde{BDH}$. The fact that pairs of squares $\varphi,\psi$ in $\gamma BDH$ satisfy the required condition inside $\tilde{BDH}$ follows by an easy induction argument on $min\left\{\ell\varphi,\ell\psi\right\}$ using the fact that given morphisms $f:A\to B$ and $g:B\to C$ in \textbf{vN}$^{fact}$ such that $[gf(A),C]<\infty$ then $[f(A),B],[g(B),C]<\infty$. This concludes the proof.
	\end{proof}
	
	\noindent The horizontal identity functor and the horizontal composition functor of $\tilde{BDH}$ are functors:

	\[L^2:\mbox{\textbf{vN}}^{fact}\to \tilde{BDH}_1\]

	\noindent and

	\[\boxtimes_\bullet:\tilde{BDH}_1\times_{\mbox{\textbf{vN}}^{fact}} \tilde{BDH}_1\to\tilde{BDH_1}\]
	
	\noindent compatible in the sense that $\tilde{BDH}$ is a double category and such that they restrict to the corresponding functors on $\gamma BDH$. By \cite[Proposition 6.1]{yo2} and Theorem \ref{thmprojection1} the space of morphisms of $\tilde{BDH}_1$ is the complex vector space spanned by formal vertical compositions of the form:

	\begin{center}
		
		\begin{tikzpicture}
			\matrix(m)[matrix of math nodes, row sep=4em, column sep=4em,text height=1.5ex, text depth=0.25ex]
			{A&A\\
				A&A\\
				B&B\\
				B&B\\};
			\path[->,font=\scriptsize,>=angle 90]
			(m-1-1) edge node [above]{$H$} (m-1-2)
			edge [blue]node{} (m-2-1)
			(m-2-1) edge [red]node{} (m-2-2)			
			(m-1-2) edge [blue]node{} (m-2-2)
			(m-1-1) edge [white] node [black][fill=white]{$\varphi$}(m-2-2)

			(m-2-1) edge node [left]{$f$} (m-3-1)
			(m-2-2) edge node [right]{$f$} (m-3-2)			
			(m-3-1) edge [red]node{} (m-3-2)
			(m-2-1) edge[white] node [black][fill=white]{$i_f$}(m-3-2)

			(m-3-1) edge [blue]node {} (m-4-1)
			(m-3-2) edge [blue]node {} (m-4-2)			
			(m-4-1) edge node[below]{$K$} (m-4-2)
			(m-3-1) edge[white] node [black][fill=white]{$\psi$}(m-4-2)
			;
		\end{tikzpicture}
	\end{center}

	\noindent where $A,B$ are factors, $f$ is a unital $^*$-morphism of possibly infinite index, $H$ is a left-right $A$ Hilbert bimodule, $K$ is a left-right $B$ Hilbert bimodule, $\varphi,\psi$ are bounded intertwiners from $H$ to $L^2(A)$ and from $L^2(B)$ to $K$ respectively, and $L^2(f)$ is a formal object in $\tilde{BDH}_1$. Whenever $f$ satisfies the inequality:
	
	\[[f(A),B]<\infty\]

	\noindent the formal symbol $L^2(f)$ is the image of $f$ under the $L^2$ functor of \cite{Bartels1}, the three term composition above is the corresponding composition in $BDH$. Moreover, this three term formal composition is a square in $\gamma BDH$ if and only if $[f(A),B]<\infty$. 
	
	In the construction presented in Proposition \ref{canonicalprojectionvnalgebras} we have not addressed the fact that we wish for $\tilde{BDH}$ to inherit, from $\gamma BDH$ the structure of a symmetric tensor double category. We will address this issue elsewhere.

	\

	\noindent \textit{Representability}

	\
	
	\noindent In Proposition \ref{canonicalprojectionvnalgebras} we have obtained a linear double category $\tilde{BDH}$ satisfying the equation 
	
	\[H^*\tilde{BDH}=W^*_{fact}\]
	
	\noindent and having $\gamma BDH$ as sub-double category. This provides compatible linear functors of the Haagerup standard form and the Connes fusion operation on linear categories of Hilbert bimodules and formal equivariant bounded intertwiners. We would like to obtain such functors, not on formal equivariant intertwiners, but on the category of Hilbert bimodules and actual equivariant intertwiners. We are not able to do this at the moment but Theorem \ref{thmprojection1} provides a possible solution to this. Assuming such functorial extensions exist, compatibility would provide a linear double category $C$ satisfying the equation
	
	\[H^*C=W^*_{fact}\]
	
	\noindent having $BDH$ as a sub-double category and such that the category of squares $C_1$ is a linear sub-category of the category \textbf{Mod}$^{fact}$ of Hilbert bimodules and equivariant intertwiners. Such category would be in the $\gamma$-fiber of a globularly generated double category, $\gamma C$, satisfying the equation
	
	\[H^*\gamma C=W^*_{fact}\]

	\noindent having $\gamma BDH$ as a sub-double category and such that $\gamma C_1$ is a linear sub-category of \textbf{Mod}$^{fact}$. In that case the morphism functor $\pi^{\gamma C}_1$ of $\pi^{\gamma C}$ will be a linear functor from $Q^\mathbb{C}_{W^*_{fact 1}}$ to \textbf{Mod}$^{fact}$ satisfying invariance conditions with respect to the double category structures of $Q^\mathbb{C}_{W^*_{fact}}$ and $BDH$. This suggests we should study the structure of the 2-category 
	
	\[Fun(Q^\mathbb{C}_{W^*_{fact 1}},\mbox{\textbf{Mod}}^{fact})\]

	\noindent under a possible set of initial conditions. We will analyze this point of view elsewhere, but we would like to obtain a categorical version of the above comments. In order to do this we need a way to understand functors between free globularly generated double categories. In the next section we study free double functors.
	
	\

	\section{Free double functors}\label{sFreeDoubleFunctors}\label{s4}

	\noindent In this section we introduce free double functors between free globularly generated double categories. We will use the free double functor construction to extend the free globularly generated double category construction to a functor. We use this construction to prove the results of Section \ref{sFGGCatasafreeobject}. The methods employed in the construction of free double functors mimic the construction of the canonical double projection of Section \ref{s3}. 
	
	\

	\noindent \textit{Strategy}
	
	\

	\noindent Given a pseudofunctor $G:\mathcal{B}\to\mathcal{B}'$ between decorated bicategories $\mathcal{B},\mathcal{B}'$ the free double functor $Q_G$ associated to $G$ will be a double functor from $Q_\mathcal{B}$ to $Q_{\mathcal{B}'}$ satisfying the equation:

	\[H^*Q_G\restriction_{\mathcal{B}}=G\]

	\noindent The strategy for the construction of $Q_G$ will be analogous to that of the construction of the canonical double projection of Section \ref{s3}. We first define $Q_G$ on formal squares of the form:

	\begin{center}
		
		\begin{tikzpicture}
			\matrix(m)[matrix of math nodes, row sep=4em, column sep=4em,text height=1.5ex, text depth=0.25ex]
			{\bullet&\bullet&\bullet&\bullet\\
				\bullet&\bullet&\bullet&\bullet\\};
			\path[->,font=\scriptsize,>=angle 90]
			(m-1-1) edge node {} (m-1-2)
			edge [blue]node{} (m-2-1)
			(m-2-1) edge node{} (m-2-2)			
			(m-1-2) edge [blue]node{} (m-2-2)
			(m-1-1) edge [white] node [black][fill=white]{}(m-2-2) 
			
			(m-1-3) edge [red]node {} (m-1-4)
			edge node{} (m-2-3)
			(m-2-3) edge [red]node{} (m-2-4)			
			(m-1-4) edge node{} (m-2-4)
			
			;
		\end{tikzpicture}
	\end{center}

	\noindent The requirements in the definition of $Q_G$ force $Q_G$ to be uniquely defined by $G$ on the above squares. We freely extend this to a functor $F_1^G:F^\mathcal{B}_1\to F^{\mathcal{B}'}_1$. We extend this to a functor from $F_k^G:F_k^\mathcal{B}\to F_k^{\mathcal{B}'}$ inductively for all $k$ and we take the corresponding limit $F^G_\infty:F^\mathcal{B}_\infty\to F^{\mathcal{B}'}_\infty$. We prove that $F^G_\infty$ is compatible with both the structure data and the equivalence relations $R_\infty$ defining $Q_\mathcal{B}$ and $Q_{\mathcal{B}'}$ and that thus descends to the morphism functor of a double functor $Q_G$ from $Q_\mathcal{B}$ to $Q_{\mathcal{B'}}$. The coherence data for $Q_G$ will be inherited from that of $G$. Most of the technical results used in the construction of the free globularly generated double functor are analogous to arguments used in Section \ref{s3}. The precise statements are useful. We will thus record statements for these results but we will usually omit proofs.

	\

	\noindent \textit{Construction}

	\
	
	\noindent Let $\mathcal{B},\mathcal{B}'$ be decorated bicategories. Let $G:\mathcal{B}\to \mathcal{B}'$ be a decorated pseudofunctor. We begin the construction of $Q_G$ with the following lemma. Its proof is analogous to that of Proposition \ref{leminductionpi1} and we will omit it.

	\

	\begin{lem}\label{propconstructionfreefunctor}
		
		There exists a pair, formed by a sequence of functions $E_k^G:E_k^\mathcal{B}\to E_k^{\mathcal{B}'}$, and a sequence of functors $F_k^G:F_k^\mathcal{B}\to F_k^{\mathcal{B}'}$, with $k$ running over all positive integers, such that the following conditions are satisfied:
		
		\begin{enumerate}
			
			\item The equations $E_1^G\varphi=G\varphi$ and $E_1^G i_\alpha=i_\alpha$ hold for every 2-cell $\varphi$ in $\mathcal{B}$ and for every morphism $\alpha$ in $\mathcal{B}^*$.

			\item For every pair of positive integers $m,k$ such that $m\leq k$, the restriction of $E_k^G$ to the collection of morphisms of $F_m^\mathcal{B}$ is equal to the morphism function of $F_m^G$, and the restriction of the morphism function of $F_k^G$ to $E_m^\mathcal{B}$ is equal to $E_m^G$.

			\item The following two squares commute for every positive integer $k$:
			
			\begin{center}
				
				\begin{tikzpicture}
					\matrix(m)[matrix of math nodes, row sep=4em, column sep=4em,text height=1.5ex, text depth=0.25ex]
					{E_k^\mathcal{B}&E_k^{\mathcal{B}'}\\
						\mathcal{B}_1&\mathcal{B}'_1\\};
					\path[->,font=\scriptsize,>=angle 90]
					(m-1-1) edge node[auto] {$E_k^G$} (m-1-2)
					edge node[left] {$d_k^\mathcal{B},c_k^\mathcal{B}$} (m-2-1)
					(m-2-1) edge node[below]	{$G$} (m-2-2)			
					(m-1-2) edge node[auto] {$d_k^{\mathcal{B}'},c_k^{\mathcal{B}'}$} (m-2-2);
				\end{tikzpicture}

			\end{center}

			\item The following two squares commute for every positive integer $k$:
			
			\begin{center}
				
				\begin{tikzpicture}
					\matrix(m)[matrix of math nodes, row sep=4em, column sep=4em,text height=1.5ex, text depth=0.25ex]
					{E_k^\mathcal{B}&E_k^{\mathcal{B}'}\\
						\mbox{Hom}_{\mathcal{B}^*}&\mbox{Hom}_{\mathcal{B}'^*}\\};
					\path[->,font=\scriptsize,>=angle 90]
					(m-1-1) edge node[auto] {$E_k^G$} (m-1-2)
					edge node[left] {$s_k^\mathcal{B},t_k^\mathcal{B}$} (m-2-1)
					(m-2-1) edge node[below]	{$G^*$} (m-2-2)
					(m-1-2) edge node[right]{$s_k^\mathcal{B'},t_k^{\mathcal{B}'}$}(m-2-2);
				\end{tikzpicture}

			\end{center}

			\item The following two squares commute for every positive integer $k$:
			
			\begin{center}
				
				\begin{tikzpicture}
					\matrix(m)[matrix of math nodes, row sep=4em, column sep=4em,text height=1.5ex, text depth=0.25ex]
					{F_k^\mathcal{B}&F_k^{\mathcal{B}'}\\
						\mathcal{B}^*&\mathcal{B}'^*\\};
					\path[->,font=\scriptsize,>=angle 90]
					(m-1-1) edge node[auto] {$F_k^G$} (m-1-2)
					edge node[left] {$s_{k+1}^\mathcal{B},t_{k+1}^\mathcal{B}$} (m-2-1)
					(m-2-1) edge node[below]	{$G^*$} (m-2-2)			
					(m-1-2) edge node[auto] {$s_{k+1}^{\mathcal{B}'},t_{k+1}^{\mathcal{B}'}$} (m-2-2);
				\end{tikzpicture}

			\end{center}

			\item The following square commutes for every positive integer $k$
			
			\begin{center}
				
				\begin{tikzpicture}
					\matrix(m)[matrix of math nodes, row sep=4em, column sep=4em,text height=1.5ex, text depth=0.25ex]
					{E_k^\mathcal{B}\times_{\mbox{Hom}_{\mathcal{B}^*}} E_k^\mathcal{B}&E_k^{\mathcal{B}'}\times_{\mbox{Hom}_{\mathcal{B}'^*}} E_k^{\mathcal{B}'}\\
						E_k^\mathcal{B}&E_k^{\mathcal{B}'}\\};
					\path[->,font=\scriptsize,>=angle 90]
					(m-1-1) edge node[auto] {$E_k^G\times_G E_k^G$} (m-1-2)
					edge node[left] {$\ast_k^\mathcal{B}$} (m-2-1)
					(m-2-1) edge node[below]	{$E_k^G$} (m-2-2)			
					(m-1-2) edge node[auto] {$\ast_k^{\mathcal{B}'}$} (m-2-2);
				\end{tikzpicture}

			\end{center}

		\end{enumerate}
		
		\noindent Moreover, conditions 1-5 above determine the pair of sequences $E_k^G$ and $F_k^G$.

	\end{lem}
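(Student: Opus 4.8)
The plan is to reproduce, almost verbatim, the inductive scheme used in the proof of Lemma \ref{leminductionpi1}, with two substitutions dictated by the fact that the codomain of $Q_G$ is itself free rather than a fixed internalization $C$. First, the target double category $C$ is replaced by $Q_{\mathcal{B}'}$; second, and consequently, the evaluation function $q^C$ that collapsed a formal word into an honest composite is replaced by the word operation $\mu$ of the summary, so that each $E_k^G$ produces a \emph{formal} word in $E_k^{\mathcal{B}'}$ and nothing is ever evaluated. The role played in Lemma \ref{leminductionpi1} by the hypothesis $H^*C=\mathcal{B}$ (which identified the globular and horizontal generators of $C$ with the $2$-cells and decoration of $\mathcal{B}$) is here played by the two components of the decorated pseudofunctor $G$: its action on $2$-cells and its decoration component $G^*$ on $\mathrm{Hom}_{\mathcal{B}^*}$.

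For the base case I would first record that $G$ induces a map of generators $G_{\mathbb{G}}\colon\mathbb{G}\to\mathbb{G}'$, sending the generator of a $2$-cell $\varphi$ to the generator of $G\varphi$ and the horizontal-identity generator of a vertical arrow to the horizontal-identity generator determined by its image under $G^*$; because $G$ is a decorated pseudofunctor, $G_{\mathbb{G}}$ together with $G^*$ intertwines the horizontal source and target functions $s_0,t_0$, so the word operation $E_1^G:=\mu_{G_{\mathbb{G}},G^*}$ is well defined as a function $E_1^{\mathcal{B}}\to E_1^{\mathcal{B}'}$. Condition 1 then holds by construction, while conditions 3, 4 and 6 are immediate from the intertwining properties of $\mu$ recorded in the summary. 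I would then define $F_1^G$ as the unique extension of $E_1^G$ along the free generation of $F_1^{\mathcal{B}}$ by $E_1^{\mathcal{B}}$ with respect to $d_1,c_1$, whose existence is granted by condition 3, taking $G$ on $1$-cells as its object function; condition 2 holds trivially, and condition 5 follows from condition 4 for $E_1^G$ and the functoriality of the horizontal source and target.

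The inductive step is the heart of the construction and contains its one genuinely non-automatic point. Assuming $E_m^G$ and $F_m^G$ built for all $m<k$ and satisfying conditions 1--6, the key observation is that condition 5 for $F_{k-1}^G$ asserts exactly that the pair $(F_{k-1}^G,G^*)$ intertwines $s_k,t_k$ with $s_k^{\mathcal{B}'},t_k^{\mathcal{B}'}$, which is precisely the hypothesis needed for the word operation $\mu_{F_{k-1}^G,G^*}$ to be defined; I would set $E_k^G:=\mu_{F_{k-1}^G,G^*}$. Conditions 4 and 6 are then formal consequences of the properties of $\mu$, conditions 1 and 2 follow from the induction hypothesis, and condition 3 follows from condition 2 together with functoriality of $F_{k-1}^G$. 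As in the base case, $F_k^G$ is the unique functorial extension of $E_k^G$ provided by condition 3, and its conditions 2 and 5 are inherited from those of $E_k^G$. I expect the main obstacle to be exactly this bookkeeping of well-definedness: one must verify that condition 5 is faithfully propagated through the recursion so that every $\mu_{F_{k-1}^G,G^*}$ is legitimate, and one must be careful that the pseudofunctor \emph{coherence} of $G$ (associators and unitors) is \emph{not} invoked at this stage — it is deferred, exactly as compatibility with $R_\infty$ was isolated in Lemma \ref{compatibilitywithRinftypi}, to the later verification that the limit functor $F_\infty^G$ descends through the relations defining $Q_{\mathcal{B}}$ and $Q_{\mathcal{B}'}$.

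Finally, the determination clause follows by the same recursion: condition 1 pins down $E_1^G$ on $\mathbb{G}$, and since $\mu$ is the unique extension to words respecting concatenation this fixes $E_1^G$ on all of $E_1^{\mathcal{B}}$, after which condition 3 and freeness fix $F_1^G$; the inductive step repeats this, with condition 2 identifying the letters of a word in $E_k^{\mathcal{B}}$ with already-determined morphisms of $F_{k-1}^{\mathcal{B}}$ and freeness fixing the functor at each level. I would present the existence and the determination together in a single induction, exactly as in Lemma \ref{leminductionpi1}.
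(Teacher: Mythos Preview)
Your proposal is correct and follows exactly the approach the paper indicates: the paper omits the proof entirely, stating only that it ``is analogous to that of Proposition \ref{leminductionpi1},'' and your substitutions---replacing $q^C$ by the word operation $\mu$ (since the codomain is itself free), replacing $id_{\mathbb{G}}$ by the generator map $G_{\mathbb{G}}$ induced by $G$, and replacing $id_{\mathcal{B}^*}$ by $G^*$---are precisely the ones that make the inductive scheme of Lemma \ref{leminductionpi1} go through in this setting. Your care in noting that the pseudofunctor coherence of $G$ is not used here but deferred to the $R_\infty$-compatibility step is also in line with how the paper organizes the argument.
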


	\begin{obs} \label{obsfreedoublefunctors1}
		Let $m,k$ be positive integers such that $m\leq k$. Condition 2 of Proposition \ref{propconstructionfreefunctor} implies that the equations hold:
		
		\[E_k^G\restriction_{E_k^\mathcal{B}}=E_m^G \ \mbox{and} \ F_k^G\restriction_{F_m^\mathcal{B}}=F_m^G\]
		
	\end{obs}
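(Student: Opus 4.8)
The plan is to deduce both equations purely from the two halves of condition 2 of Proposition \ref{propconstructionfreefunctor}, together with the fact that $F_m^\mathcal{B}$ is the free category generated by $E_m^\mathcal{B}$. Throughout I fix $m\leq k$ and use two structural facts from the construction: the generating set $E_m^\mathcal{B}$ is contained in the collection $\mbox{Hom}_{F_m^\mathcal{B}}$ of morphisms of $F_m^\mathcal{B}$, and the sequences $E_\bullet^\mathcal{B},F_\bullet^\mathcal{B}$ are increasing, so that every restriction written below is defined. (The first displayed equation is to be read as $E_k^G\restriction_{E_m^\mathcal{B}}=E_m^G$, matching the analogous Observation \ref{observationinductionpi}.)

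First I would prove $E_k^G\restriction_{E_m^\mathcal{B}}=E_m^G$. Applying the first half of condition 2 to the pair $(m,k)$, the restriction of $E_k^G$ to $\mbox{Hom}_{F_m^\mathcal{B}}$ equals the morphism function of $F_m^G$; restricting this identity further to the subset $E_m^\mathcal{B}\subseteq\mbox{Hom}_{F_m^\mathcal{B}}$ shows that $E_k^G\restriction_{E_m^\mathcal{B}}$ equals the restriction of the morphism function of $F_m^G$ to $E_m^\mathcal{B}$. By the second half of condition 2 applied to the pair $(m,m)$, that last restriction is exactly $E_m^G$. Chaining the two identities gives $E_k^G\restriction_{E_m^\mathcal{B}}=E_m^G$.

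Next I would prove the functor equation $F_k^G\restriction_{F_m^\mathcal{B}}=F_m^G$. Both $F_k^G\restriction_{F_m^\mathcal{B}}$ and $F_m^G$ are functors out of $F_m^\mathcal{B}$, the former obtained by restricting $F_k^G$ to the subcategory $F_m^\mathcal{B}\subseteq F_k^\mathcal{B}$ and composing with the inclusion $F_m^{\mathcal{B}'}\subseteq F_k^{\mathcal{B}'}$. Since $F_m^\mathcal{B}$ is freely generated by $E_m^\mathcal{B}$, two functors out of it that carry the same object function coincide as soon as their morphism functions agree on the generators $E_m^\mathcal{B}$. On generators this is immediate: the morphism function of $F_k^G$ restricted to $E_m^\mathcal{B}$ equals $E_m^G$ by the second half of condition 2 for the pair $(m,k)$, while the morphism function of $F_m^G$ restricted to $E_m^\mathcal{B}$ equals $E_m^G$ by the same condition for the pair $(m,m)$. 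Functoriality then propagates this agreement along composites to all of $\mbox{Hom}_{F_m^\mathcal{B}}$, yielding $F_k^G\restriction_{F_m^\mathcal{B}}=F_m^G$.

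The argument is essentially bookkeeping, and the main point requiring care is the object-function agreement needed in the functor statement: this is \emph{not} forced by condition 2 alone but is supplied by the construction in Proposition \ref{propconstructionfreefunctor}, where the object function of every $F_j^G$ is induced by the action of $G$ on horizontal $1$-cells and is thus independent of $j$. With that in hand, both equations follow by composing the two halves of condition 2 and invoking freeness of $F_m^\mathcal{B}$, exactly as for the analogous Observation \ref{observationinductionpi} in Section \ref{s3}.
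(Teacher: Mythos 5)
Your proposal is correct and matches the paper's intent exactly: the paper states this Observation without proof as an immediate consequence of condition 2 of Lemma \ref{propconstructionfreefunctor}, and your argument is precisely the bookkeeping being left implicit --- chaining the two halves of condition 2 (for the pairs $(m,k)$ and $(m,m)$) for the first equation, and invoking freeness of $F_m^\mathcal{B}$ over $E_m^\mathcal{B}$ for the second. You also rightly read the first displayed equation as $E_k^G\restriction_{E_m^\mathcal{B}}=E_m^G$ (the subscript $E_k^\mathcal{B}$ in the statement is a typo) and correctly observe that object-function agreement comes from the construction, where every $F_j^G$ acts on objects via $G$ on $1$-cells independently of $j$.
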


	\begin{notation}\label{notationfreedoublefunctors}
		We will write $E_\infty^G$ for the limit $\varinjlim E_k^G$ in \textbf{Set} of the sequence $E_k^G$. Thus defined $E_\infty^G$ is a function from $E_\infty^\mathcal{B}$ to $E_\infty^{\mathcal{B}'}$. Further, we will write $F_\infty^G$ for the limit $\varinjlim F_k^G$ in \textbf{Cat} of the sequence of functors $F_k^G$. Thus defined, $F_\infty^G$ is a functor from $F_\infty^\mathcal{B}$ to $F_\infty^{\mathcal{B}'}$.
		
	\end{notation}

	\noindent The following observation follows directly from Lemma \ref{propconstructionfreefunctor} and Observation \ref{obsfreedoublefunctors1}.

	\begin{obs}\label{obsfreedoublefunctorsinfty}

		The pair $E_\infty^G,F_\infty^G$ satisfies the following conditions:
		
		\begin{enumerate}
			\item $E_\infty^G$ is equal to the morphism function of $F_\infty^G$.
			\item Let $k$ be a positive integer. The following equations hold:

			\[E_\infty^G\restriction_{E_k^\mathcal{B}}=E_k^G \ \mbox{and} \ F_\infty^G\restriction_{F_k^\mathcal{B}}=F_k^G\]
			
			\item The following squares commute:
			
			\begin{center}
				
				\begin{tikzpicture}
					\matrix(m)[matrix of math nodes, row sep=4em, column sep=4em,text height=1.5ex, text depth=0.25ex]
					{F_\infty^\mathcal{B}&F_\infty^{\mathcal{B}'}\\
						\mathcal{B}^*&\mathcal{B}'^*\\};
					\path[->,font=\scriptsize,>=angle 90]
					(m-1-1) edge node[auto] {$F_\infty^G$} (m-1-2)
					edge node[left] {$s_\infty^\mathcal{B},t_\infty^\mathcal{B}$} (m-2-1)
					(m-2-1) edge node[below]	{$G^*$} (m-2-2)			
					(m-1-2) edge node[auto] {$s_\infty^{\mathcal{B}'},t_\infty^{\mathcal{B}'}$} (m-2-2);
				\end{tikzpicture}

			\end{center}

			\item The following square commutes:
			
			\begin{center}
				
				\begin{tikzpicture}
					\matrix(m)[matrix of math nodes, row sep=4em, column sep=4em,text height=1.5ex, text depth=0.25ex]
					{E_\infty^\mathcal{B}\times_{\mbox{Hom}_{\mathcal{B}^*}} E_\infty^\mathcal{B}&E_\infty^{\mathcal{B}'}\times_{\mbox{Hom}_{\mathcal{B}'^*}} E_\infty^{\mathcal{B}'}\\
						E_\infty^\mathcal{B}&E_\infty^{\mathcal{B}'}\\};
					\path[->,font=\scriptsize,>=angle 90]
					(m-1-1) edge node[auto] {$E_\infty^G\times_G E_\infty^G$} (m-1-2)
					edge node[left] {$\ast_\infty^\mathcal{B}$} (m-2-1)
					(m-2-1) edge node[below]	{$E_\infty^G$} (m-2-2)			
					(m-1-2) edge node[auto] {$\ast_\infty^{\mathcal{B}'}$} (m-2-2);
				\end{tikzpicture}

			\end{center}
			
		\end{enumerate}
		
	\end{obs}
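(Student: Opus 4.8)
The plan is to obtain each item of the observation by passing the finite-stage data of Lemma \ref{propconstructionfreefunctor} to the directed colimit, using the compatibility recorded in Observation \ref{obsfreedoublefunctors1} as the cocone condition that makes the colimit maps well defined. Recall that $E_\infty^G=\varinjlim E_k^G$ in \textbf{Set} and $F_\infty^G=\varinjlim F_k^G$ in \textbf{Cat}, where the underlying diagrams are the chains $E_1^\mathcal{B}\subseteq E_2^\mathcal{B}\subseteq\cdots$ and $F_1^\mathcal{B}\subseteq F_2^\mathcal{B}\subseteq\cdots$, with colimits $E_\infty^\mathcal{B}=\bigcup_k E_k^\mathcal{B}$ and $F_\infty^\mathcal{B}$. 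The equations $E_k^G\restriction_{E_m^\mathcal{B}}=E_m^G$ and $F_k^G\restriction_{F_m^\mathcal{B}}=F_m^G$ of Observation \ref{obsfreedoublefunctors1} say precisely that the stage maps form a compatible cocone over these chains, so the universal property produces the colimit maps $E_\infty^G$ and $F_\infty^G$; item (2) is then immediate from this description, since the restriction of a colimit map to a stage of the chain recovers the corresponding stage map.

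For item (1) I would use that the morphism function of a directed colimit of functors in \textbf{Cat} is computed stagewise as the directed colimit of the morphism functions in \textbf{Set}. Since by construction $E_k^G$ is the morphism function of $F_k^G$ at each finite stage, passing to colimits gives that $E_\infty^G=\varinjlim E_k^G$ is the morphism function of $F_\infty^G=\varinjlim F_k^G$, which is the assertion.

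Items (3) and (4) follow by transporting the level-wise commuting diagrams of Lemma \ref{propconstructionfreefunctor} (conditions 5 and 6, together with the compatibility of the structure functions $s_k,t_k,\ast_k$ and of the fixed decoration map $G^*$) through the colimit. Concretely, the maps $s_\infty^\mathcal{B},t_\infty^\mathcal{B}$ are the colimits of the $s_k^\mathcal{B},t_k^\mathcal{B}$ and $G^*$ does not vary with $k$, so to check that the square of item (3) commutes it suffices, by item (2), to verify it after restriction to each $F_k^\mathcal{B}$, where it is exactly condition 5 of Lemma \ref{propconstructionfreefunctor}. The same reduction handles item (4), once one identifies the domain fiber product as the colimit of the level-wise fiber products of condition 6.

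The hard part --- and really the only point requiring an argument beyond bookkeeping --- is this last identification for item (4): one must know that
\[E_\infty^\mathcal{B}\times_{\mbox{Hom}_{\mathcal{B}^*}}E_\infty^\mathcal{B}=\varinjlim\left(E_k^\mathcal{B}\times_{\mbox{Hom}_{\mathcal{B}^*}}E_k^\mathcal{B}\right),\]
so that the level-wise square of condition 6 can legitimately be passed to the limit. This holds because the colimits in question are filtered (the diagrams are chains indexed by $\mathbb{N}$) and filtered colimits commute with finite limits in \textbf{Set}; here the pullback is taken over the \emph{fixed} set $\mbox{Hom}_{\mathcal{B}^*}$, so the chain of fiber products indeed has the pullback of the colimit as its colimit. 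With this identification in hand, commutativity of the colimit square of item (4) reduces to commutativity of each finite-stage square, namely condition 6 of Lemma \ref{propconstructionfreefunctor}, completing the argument.
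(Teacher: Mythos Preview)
Your argument is essentially the paper's own: the paper simply states that the observation ``follows directly from Lemma \ref{propconstructionfreefunctor} and Observation \ref{obsfreedoublefunctors1},'' and you have spelled out what that passage to the colimit entails. Items (2)--(4) are handled correctly.

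There is one imprecision in your treatment of item (1). You write that ``by construction $E_k^G$ is the morphism function of $F_k^G$ at each finite stage,'' but this is not true: $E_k^G$ is defined only on $E_k^\mathcal{B}$, while the morphism function of $F_k^G$ is defined on the larger set $\mbox{Hom}_{F_k^\mathcal{B}}$ (recall that $F_k^\mathcal{B}$ is the free category on $E_k^\mathcal{B}$). The correct argument uses the interleaving $E_k^\mathcal{B}\subseteq \mbox{Hom}_{F_k^\mathcal{B}}\subseteq E_{k+1}^\mathcal{B}$, so that the two chains $(E_k^\mathcal{B})$ and $(\mbox{Hom}_{F_k^\mathcal{B}})$ are cofinal in one another and hence share the same colimit; condition 2 of Lemma \ref{propconstructionfreefunctor} then says precisely that the maps $E_k^G$ and the morphism functions of $F_k^G$ agree on overlaps, so their colimits coincide. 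With this adjustment your proof is complete.
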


	\noindent It is easily seen, from the above observation, that the functor $F_\infty^G$ is compatible with the equivalence relations $R_\infty^\mathcal{B}$ and $R_\infty^{\mathcal{B}'}$. We will write $V_\infty^G$ for the functor from $V_\infty^\mathcal{B}$ to $V_\infty^{\mathcal{B}'}$ induced by $F_\infty^G$ and the equivalence relations $R_\infty^\mathcal{B}$ and $R_\infty^{\mathcal{B}'}$. We will write $H_\infty^G$ for the morphism function of $V_\infty^G$. Thus defined $H_\infty^G$ is function from $H_\infty^\mathcal{B}$ to $H_\infty^{\mathcal{B}'}$ induced by the function $E_\infty^G$ and the equivalence relations $R_\infty^\mathcal{B}$ and $R_\infty^{\mathcal{B}'}$. The following proposition follows directly from Observation \ref{obsfreedoublefunctorsinfty}.

	\begin{prop}\label{propfreedoublefunctor}
		$V_\infty^G$ satisfies the following conditions:
		
		\begin{enumerate}
			
			\item The following squares commute:
			
			\begin{center}
				
				\begin{tikzpicture}
					\matrix(m)[matrix of math nodes, row sep=4em, column sep=4em,text height=1.5ex, text depth=0.25ex]
					{V_\infty^\mathcal{B}&V_\infty^{\mathcal{B}'}\\
						\mathcal{B}^*&\mathcal{B}'^*\\};
					\path[->,font=\scriptsize,>=angle 90]
					(m-1-1) edge node[auto] {$V_\infty^G$} (m-1-2)
					edge node[left] {$s_\infty^\mathcal{B},t_\infty^\mathcal{B}$} (m-2-1)
					(m-2-1) edge node[below]	{$G^*$} (m-2-2)			
					(m-1-2) edge node[auto] {$s_\infty^{\mathcal{B}'},t_\infty^{\mathcal{B}'}$} (m-2-2);
				\end{tikzpicture}

			\end{center}
			
			\item The following square commutes
			
			\begin{center}
				
				\begin{tikzpicture}
					\matrix(m)[matrix of math nodes, row sep=4em, column sep=4em,text height=1.5ex, text depth=0.25ex]
					{V_\infty^\mathcal{B}\times_{\mathcal{B}^*} V_\infty^\mathcal{B}&V_\infty^{\mathcal{B}'}\times_{\mathcal{B}'^*} V_\infty^{\mathcal{B}'}\\
						V_\infty^\mathcal{B}&V_\infty^{\mathcal{B}'}\\};
					\path[->,font=\scriptsize,>=angle 90]
					(m-1-1) edge node[auto] {$V_\infty^G\times_G H_\infty^G$} (m-1-2)
					edge node[left] {$\ast_\infty^\mathcal{B}$} (m-2-1)
					(m-2-1) edge node[below]	{$V_\infty^G$} (m-2-2)			
					(m-1-2) edge node[auto] {$\ast_\infty^{\mathcal{B}'}$} (m-2-2);
				\end{tikzpicture}

			\end{center}

		\end{enumerate}
		
	\end{prop}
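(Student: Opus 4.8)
The plan is to obtain Proposition \ref{propfreedoublefunctor} as a routine descent of Observation \ref{obsfreedoublefunctorsinfty} along the quotient functors defining $V_\infty^\mathcal{B}$ and $V_\infty^{\mathcal{B}'}$. Recall that $V_\infty^\mathcal{B}=F_\infty^\mathcal{B}/R_\infty^\mathcal{B}$ and $V_\infty^{\mathcal{B}'}=F_\infty^{\mathcal{B}'}/R_\infty^{\mathcal{B}'}$, so there are canonical projection functors $p^\mathcal{B}:F_\infty^\mathcal{B}\to V_\infty^\mathcal{B}$ and $p^{\mathcal{B}'}:F_\infty^{\mathcal{B}'}\to V_\infty^{\mathcal{B}'}$, each surjective on objects and on morphisms. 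As noted just before the statement, $F_\infty^G$ is compatible with $R_\infty^\mathcal{B}$ and $R_\infty^{\mathcal{B}'}$, and $V_\infty^G$ is precisely the induced functor, i.e. the unique functor satisfying $V_\infty^G p^\mathcal{B}=p^{\mathcal{B}'}F_\infty^G$, with $H_\infty^G$ its morphism function. Moreover the functors $s_\infty,t_\infty$ and the horizontal composition $\ast_\infty$ attached to each $V_\infty$ are exactly the maps descended from the corresponding data on $F_\infty^\bullet$; in particular $s_\infty^\mathcal{B}p^\mathcal{B}=s_\infty^\mathcal{B}$ and $t_\infty^\mathcal{B}p^\mathcal{B}=t_\infty^\mathcal{B}$ (the symbols on the left denoting the descended functors on the quotient), and likewise for $\mathcal{B}'$.

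For the first square I would verify the identity $s_\infty^{\mathcal{B}'}V_\infty^G=G^*s_\infty^\mathcal{B}$ of functors $V_\infty^\mathcal{B}\to\mathcal{B}'^*$ (the argument for $t_\infty$ being identical) by precomposing with the surjection $p^\mathcal{B}$. Chasing the triangles,
\[s_\infty^{\mathcal{B}'}V_\infty^G p^\mathcal{B}=s_\infty^{\mathcal{B}'}p^{\mathcal{B}'}F_\infty^G=s_\infty^{\mathcal{B}'}F_\infty^G,\]
which equals $G^*s_\infty^\mathcal{B}$ by condition 3 of Observation \ref{obsfreedoublefunctorsinfty}, and this in turn equals $G^*s_\infty^\mathcal{B}p^\mathcal{B}$ by the descent of $s_\infty^\mathcal{B}$. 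Since $p^\mathcal{B}$ is surjective, the desired identity follows.

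For the second square I would argue in the same fashion, invoking condition 4 of Observation \ref{obsfreedoublefunctorsinfty} and taking account of the fibered products. Working at the level of squares, let $q^\mathcal{B}:E_\infty^\mathcal{B}\to\mbox{Hom}_{V_\infty^\mathcal{B}}$ and $q^{\mathcal{B}'}$ denote the morphism-level quotients; then $\ast_\infty$ on the quotients descends from $\ast_\infty$ on $E_\infty^\bullet$, and $H_\infty^G=E_\infty^G$ descended. Precomposing the claimed identity with the surjection $q^\mathcal{B}\times q^\mathcal{B}$ and using the defining relation for $V_\infty^G$ together with the descent of $\ast_\infty$ reduces commutativity of the square to the already-established compatibility of $E_\infty^G$ with $\ast_\infty^\mathcal{B}$ and $\ast_\infty^{\mathcal{B}'}$. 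The one point deserving a word is that $q^\mathcal{B}\times q^\mathcal{B}$ is surjective onto $V_\infty^\mathcal{B}\times_{\mathcal{B}^*}V_\infty^\mathcal{B}$, i.e. that every horizontally composable pair of squares downstairs lifts to a composable pair upstairs; this is immediate, since composability is detected by $s_\infty,t_\infty$, which are respected by $q^\mathcal{B}$.

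I expect no genuine obstacle here. All of the substantive content has already been absorbed into the construction of $F_\infty^G$ (Lemma \ref{propconstructionfreefunctor}) and its compatibility with $R_\infty^\mathcal{B},R_\infty^{\mathcal{B}'}$; what remains is to pass the commutative diagrams of Observation \ref{obsfreedoublefunctorsinfty} through the quotients. The only mild care required is the bookkeeping with the fibered products in the second diagram and the repeated appeal to surjectivity of the quotient functors in order to cancel $p^\mathcal{B}$ (respectively $q^\mathcal{B}\times q^\mathcal{B}$) on the right.
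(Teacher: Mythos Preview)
Your proposal is correct and matches the paper's approach: the paper simply asserts that the proposition ``follows directly from Observation \ref{obsfreedoublefunctorsinfty}'' without further argument, and what you have written is exactly the routine descent-along-quotients verification that this phrase is shorthand for. Your explicit handling of the surjectivity of $p^\mathcal{B}$ and of $q^\mathcal{B}\times q^\mathcal{B}$ onto the fibered product is the right level of care, but there is nothing to add or correct.
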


	\begin{notation}
		Let $\mathcal{B},\mathcal{B}'$ be decorated bicategories. Let $G:\mathcal{B}\to\mathcal{B}'$. We write $Q_G$ for the pair $(G^*,V_\infty^G)$.
	\end{notation}

	\noindent The following is the main theorem of this section.

	\begin{theorem}\label{functorialextensionthm}
		Let $\mathcal{B},\mathcal{B}'$ be decorated bicategories. Let $G:\mathcal{B}\to\mathcal{B}'$ be a decorated pseudofunctor. In that case $Q_G$ is the unique double functor from $Q_\mathcal{B}$ to $Q_{\mathcal{B}'}$ satisfying the equation:
		
		\[H^*Q_G\restriction_{\mathcal{B}}=G\]

	\end{theorem}

	\begin{proof}
		Let $\mathcal{B}$ and $\mathcal{B}'$ be decorated bicategories. Let $G:\mathcal{B}\to \mathcal{B}'$ be a decorated pseudofunctor. We wish to prove that in that case the pair $Q_G$ is a double functor from $Q_\mathcal{B}$ to $Q_{\mathcal{B}'}$ satisfying the equation 
		
		\[H^*Q_G\restriction{\mathcal{B}}=C\]
		
		\noindent A direct computation proves that the pair $Q_G$ intertwines the horizontal identity functors $i_\infty^\mathcal{B}$ and $i_\infty^{\mathcal{B}'}$ of $\mathcal{B}$. This, together with a direct application of Proposition \ref{propfreedoublefunctor} implies that the pair $Q_G$ is a double functor, with the coherence data of $G$ as coherence data. The object function of $V_\infty^G$ is equal to the restriction of $G$ to $\mathcal{B}_1$ and the restriction of $V_\infty^G$ to 2-cells of $\mathcal{B}$ is equal to the 2-cell function of $G$. This together with the fact that the object functor $Q_G$ is $G^*$ implies that the restriction to $\mathcal{B}$ of $H^*Q_G$ is equal to $G$. This concludes the proof. 
	\end{proof}

	\begin{definition}
		We call $Q_G$ above the free double functor associated to $G$. 
	\end{definition}

	\begin{obs}
		In the more general case in which $G$ is a lax/oplax decorated functor, the double functor $Q_G$ is also lax/oplax respectively.
	\end{obs}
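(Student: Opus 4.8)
The plan is to re-run the construction of $Q_G$ carried out in Lemma \ref{propconstructionfreefunctor} through Proposition \ref{propfreedoublefunctor} verbatim, and to observe that nothing in it uses the invertibility of the coherence cells of $G$. The sequences $E_k^G$ and $F_k^G$ are pinned down by conditions 1--6 of Lemma \ref{propconstructionfreefunctor}, all of which refer only to the action of $G$ on $2$-cells and $1$-cells of $\mathcal{B}$ and to the decoration functor $G^*$ on vertical arrows; the compatibility statements with the source, target and horizontal composition functors recorded in Observation \ref{obsfreedoublefunctorsinfty} and Proposition \ref{propfreedoublefunctor} are likewise insensitive to whether the structural $2$-cells of $G$ are invertible, or even in which direction they point. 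Consequently, for a lax (respectively oplax) decorated functor $G$, the very same recipe produces a well-defined object function $G^*$ and a well-defined morphism functor $V_\infty^G\colon V_\infty^\mathcal{B}\to V_\infty^{\mathcal{B}'}$, and the descent of $F_\infty^G$ across $R_\infty^\mathcal{B}$ and $R_\infty^{\mathcal{B}'}$ goes through exactly as before.

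The only place where the type of $G$ enters is in the coherence data of the pair $Q_G=(G^*,V_\infty^G)$. As in the proof of the preceding theorem, I would take this data to be inherited directly from $G$: the comparison square attached to a horizontally composable pair $g,f$ of $1$-cells is the globular square of $Q_{\mathcal{B}'}$ corresponding, under the identification of globular squares with $2$-cells, to the compositor $\phi_{g,f}$ of $G$, and the unit comparison at an object $A$ is the globular square corresponding to the unitor $\phi_A$. When $G$ is lax these globular squares realise comparison cells $Q_G(g)\ast Q_G(f)\Rightarrow Q_G(g\circ f)$ and $\mathrm{id}_{G^*A}\Rightarrow Q_G(\mathrm{id}_A)$ pointing in the lax direction, while when $G$ is oplax they point the opposite way; in neither case are they required to be invertible. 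It then remains to check the hexagon- and triangle-type coherence axioms for a lax (respectively oplax) double functor. The key point is that, by Proposition \ref{propfreedoublefunctor}, $V_\infty^G$ commutes strictly with horizontal composition of squares and with the source and target functors, so every such axiom is an equation between pasting diagrams of globular squares; transporting it through the identification of globular squares of $Q_{\mathcal{B}'}$ with $2$-cells of $\mathcal{B}'$ turns it into precisely one of the lax (respectively oplax) coherence axioms for the bicategorical functor $G$, which holds by hypothesis.

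I expect the main obstacle to be purely bookkeeping: confirming that this transport of axioms preserves the orientation of the comparison cells and never covertly appeals to invertibility. The pseudofunctor proof uses invertibility only to assert that the inherited comparison squares are isomorphisms, not in verifying any coherence equation, so dropping that single clause yields exactly the lax case. The oplax case then follows by the evident formal dualisation, reversing the direction of all $2$-cells throughout. This establishes that $Q_G$ is lax or oplax according as $G$ is lax or oplax, as claimed.
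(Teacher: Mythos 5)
Your proposal is correct and matches the paper's (implicit) justification: the statement is left as an unproved Observation precisely because the construction of $E_k^G$, $F_k^G$ and $V_\infty^G$ in Lemma \ref{propconstructionfreefunctor} through Proposition \ref{propfreedoublefunctor} never invokes invertibility of the compositors or unitors of $G$, and the proof of the main theorem of that section equips $Q_G$ with coherence data inherited directly from $G$, which is exactly the single clause you identify as the only place invertibility entered. Your added bookkeeping on the orientation of the comparison cells and the formal dualization for the oplax case fills in what the paper leaves tacit, with no divergence in method.
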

	
	\

	\noindent \textit{Functoriality}

	\

	\noindent We now prove that the pair formed by the function associating $Q_\mathcal{B}$ to every decorated bicategory $\mathcal{B}$ and $Q_G$ to every decorated pseudofunctor $G$ is a functor from \textbf{bCat}$^*$ to \textbf{gCat}. We begin with the following lemma.

	\begin{lem}\label{functorialitylemma}
		Let $\mathcal{B},\mathcal{B}',\mathcal{B}''$ be decorated bicategories. Let $G:\mathcal{B}\to\mathcal{B}'$ and $G':\mathcal{B}'\to\mathcal{B}''$ be decorated pseudofunctors. The equations:

		\[H_k^{G'G}=H_k^{G'}H_k^G \ \mbox{and} \ V_k^{G'G}=V_k^{G'}V_k^G\]

		\noindent and

		\[H_k^{id_\mathcal{B}}=id_{H_k^{\mathcal{B}}} \ \mbox{and} \ V_k^{id_\mathcal{B}}=id_{V_k^\mathcal{B}}\]

		\noindent hold for every positive integer $k$.
	\end{lem}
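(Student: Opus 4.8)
The plan is to lift the stated equations down to the level of the pre-quotient sequences $E_k^\bullet$ and $F_k^\bullet$ of Lemma \ref{propconstructionfreefunctor}, and then to invoke the uniqueness clause of that lemma. Since $V_\infty^G$ and its morphism function $H_\infty^G$ are by construction induced by $F_\infty^G=\varinjlim F_k^G$ through the equivalence relations $R_\infty^\mathcal{B},R_\infty^{\mathcal{B}'}$, and since $V_k^G,H_k^G$ are the restrictions of $V_\infty^G,H_\infty^G$ to the filtration pieces $V_k,H_k$ of $Q_\mathcal{B}$, it suffices to establish the analogous equations $E_k^{G'G}=E_k^{G'}E_k^G$, $F_k^{G'G}=F_k^{G'}F_k^G$ together with $E_k^{id_\mathcal{B}}=id_{E_k^\mathcal{B}}$, $F_k^{id_\mathcal{B}}=id_{F_k^\mathcal{B}}$ for every positive integer $k$.

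For the composite, I would verify that the pair of sequences $\left\{E_k^{G'}E_k^G,\,F_k^{G'}F_k^G\right\}$ satisfies conditions 1--5 of Lemma \ref{propconstructionfreefunctor} with respect to the composite pseudofunctor $G'G$; uniqueness then forces it to coincide with $\left\{E_k^{G'G},\,F_k^{G'G}\right\}$. Condition 1 follows from $E_1^{G'}E_1^G\varphi=G'(G\varphi)=(G'G)\varphi$ and $E_1^{G'}E_1^G i_\alpha=i_\alpha$. Condition 2 is immediate from the compatibility of restrictions for each of the two factors. Conditions 3 and 4 follow by horizontally pasting the corresponding commuting squares for $G$ and for $G'$: the outer square has $E_k^{G'}E_k^G$ along the top, the unchanged vertical domain/codomain maps (resp.\ horizontal source/target maps) on the sides, and $G'G$ (resp.\ $(G'G)^*=G'^*G^*$) along the bottom, using $(G'G)\circ d_k^\mathcal{B}=d_k^{\mathcal{B}''}\circ(E_k^{G'}E_k^G)$ and its analogues. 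Condition 5 is the same pasting carried out at the level of $F_k$. The identity case is even more direct: the pair $\left\{id_{E_k^\mathcal{B}},\,id_{F_k^\mathcal{B}}\right\}$ trivially satisfies conditions 1--5 with respect to $id_\mathcal{B}$, so uniqueness identifies it with $\left\{E_k^{id_\mathcal{B}},\,F_k^{id_\mathcal{B}}\right\}$.

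Finally, I would take limits over $k$ and descend. Passing to $\varinjlim_k$ gives $E_\infty^{G'G}=E_\infty^{G'}E_\infty^G$ and $F_\infty^{G'G}=F_\infty^{G'}F_\infty^G$, together with the identity versions. Because the functors $V_\infty^\bullet$ are induced from the $F_\infty^\bullet$ through the quotients $R_\infty^\bullet$, the universal property of the quotient guarantees that the composite $V_\infty^{G'}V_\infty^G$ is precisely the functor induced by $F_\infty^{G'}F_\infty^G$, hence equal to $V_\infty^{G'G}$; restricting to $V_k$ and $H_k$ then yields the stated equations for $V_k^\bullet$ and $H_k^\bullet$. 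I expect the only point needing genuine care to be this descent step: one must confirm that the filtration pieces $V_k,H_k$ of $Q_\mathcal{B}$ are carried into $V_k^{\mathcal{B}'},H_k^{\mathcal{B}'}$ compatibly with the quotient, so that the restrictions $V_k^G,H_k^G$ are well defined and compose as claimed. This compatibility is already encoded in condition 2 of Lemma \ref{propconstructionfreefunctor} and in the construction of $V_\infty^G$, so once it is spelled out the remainder of the argument is the routine pasting of commuting squares described above.
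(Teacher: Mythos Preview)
Your argument is correct and slightly cleaner than the paper's own. Where the paper reruns the induction on $k$ directly at the level of $H_k^\bullet$ and $V_k^\bullet$---checking the base case on $\mathbb{G}$, extending across $H_1$ using that both sides preserve the horizontal concatenation, and then propagating through the generation of $V_k$ by $H_k$---you instead invoke the uniqueness clause of Lemma~\ref{propconstructionfreefunctor} once, after verifying that the composed sequences $\{E_k^{G'}E_k^G,F_k^{G'}F_k^G\}$ and the identity sequences satisfy conditions 1--5 for $G'G$ and $id_\mathcal{B}$ respectively. This is exactly what that uniqueness clause is for, and it buys you a shorter argument with no loss of content: the paper's inductive verification is essentially re-deriving the uniqueness you cite. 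Your descent step through $R_\infty$ is standard (induced maps on quotients compose), and your caveat about filtration compatibility is the right thing to flag; it is handled by condition~2 as you say. Either route is fine.
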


	\begin{proof}
		Let $\mathcal{B},\mathcal{B}',\mathcal{B}''$ be decorated bicategories. Let $G:\mathcal{B}\to\mathcal{B}'$ and $G':\mathcal{B}'\to\mathcal{B}''$ be decorated pseudofunctors. Let $k$ be a positive integer. We wish to prove that $H_k^{G'G}=H_k^{G'}H_k^G$, that $V_k^{G'G}=V_k^{G'}V_k^G$, that $H_k^{id_\mathcal{B}}=id_{H_k^\mathcal{B}}$ and that $V_k^{id_\mathcal{B}}=id_{H_k^\mathcal{B}}$. We prove the first two of these equations. The proof of the remaining equations will be analogous.

		
		We proceed by induction on $k$. We first prove the equation $H_1^{G'G}=H_1^{G'}H_1^G$. Let $\varphi$ be a square in $\mathbb{G}^\mathcal{B}$. Suppose first that $\varphi$ is a 2-cell of $\mathcal{B}$. In that case $H_1^{G'G}\varphi=G'G\varphi$, which is equal to $G'\varphi G\varphi=H_1^{G'}H_1^G$. The equation clearly holds for horizontal identity squares in $Q_\mathcal{B}$. This proves the equation $H_1^{G'G}=H_1^{G'}H_1^G$ is true when restricted to $\mathbb{G}^\mathcal{B}$. This and the fact that $H_1^G,H_1^{G'}$ and $H_1^{G'G}$ satisfy condition 2 of Proposition \ref{propconstructionfreefunctor} proves that the equality extends to $H_1^\mathcal{B}$. Now $V_1^{G'G}$ and $V_1^{G'}V_1^G$ are equal to $H_1^{G'G}$ when restricted to $H_1^\mathcal{B}$. This and the fact that $H_1^\mathcal{B}$ generates $V_1^\mathcal{B}$ proves the equation $V_1^{G'G}=V_1^{G'}V_1^G$.
		
		Let now $k$ be a positive integer such that $k>1$. Suppose that for every $m<k$ the equations $H_m^{G'G}=H_m^{G'}H_m^G$ and $V_m^{G'G}=V_m^{G'}V_m^G$ hold. We now prove that the equations $H_k^{G'G}=H_k^{G'}H_k^G$ and $V_k^{G'G}=V_k^{G'}V_k^G$ hold. We first prove $H_k^{G'G}=H_k^{G'}H_k^G$. Both function $H_k^{G'G}$ and $H_k^{G'}H_k^G$ are equal to the morphism function of $V_{k-1}^{G'G}$ when restricted to the morphisms of $V_{k-1}^{\mathcal{B}}$. This, together with the way $H_k^{G'G}$ is defined, and the fact that $V_{k-1}^{G'G}$ satisfies condition 2 of Proposition \ref{propconstructionfreefunctor} implies that $H_k^{G'G}=H_k^{G'}H_k^G$ holds. Now, both $V_k^{G'G}$ and $V_k^{G'}V_k^G$ are equal to $H_k^{G'G}$ on the set of generators $H_k^\mathcal{B}$ of $V_k^\mathcal{B}$. This, together with the fact that $H_k^{G'G}$ satisfies condition 1 of Porposition \ref{propconstructionfreefunctor} implies the equation $V_k^{G'G}=V_k^{G'}V_k^G$ of $V_k^G$ and $V_k^{G'}$. The proof of the remaining two equations is analogous. This concludes the proof.    
	\end{proof}

	\begin{cor}\label{corfunctorialityfree}
		The pair formed by the function associating $Q_\mathcal{B}$ to every decorated bicategory $\mathcal{B}$ and $Q_G$ to every decorated pseudofunctor $G$ is a functor from \textbf{bCat}$^*$ to \textbf{gCat}.
	\end{cor}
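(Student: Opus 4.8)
The plan is to deduce the corollary from Lemma \ref{functorialitylemma} by passing to the limit, in exact analogy with the way the surjectivity statement of Theorem \ref{thmprojection1} was obtained from its filtration-level computations. Observe first that it has already been established that $Q_G$ is a double functor from $Q_\mathcal{B}$ to $Q_{\mathcal{B}'}$; since both $Q_\mathcal{B}$ and $Q_{\mathcal{B}'}$ are globularly generated, $Q_G$ is a morphism in \textbf{gCat}, so the assignment does land in the claimed target. Thus the only thing left to verify is functoriality, namely that the equations $Q_{G'G}=Q_{G'}Q_G$ and $Q_{id_\mathcal{B}}=id_{Q_\mathcal{B}}$ hold for all composable decorated pseudofunctors $G,G'$ and all decorated bicategories $\mathcal{B}$.

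A double functor is determined by its object functor together with its morphism functor, so it suffices to check these equations separately on the two components of the pair $Q_G=(G^*,V_\infty^G)$. On object functors the equations reduce to $(G'G)^*=G'^*G^*$ and $(id_\mathcal{B})^*=id_{\mathcal{B}^*}$, which are immediate from the fact that $G\mapsto G^*$ is the restriction of a decorated pseudofunctor to its underlying decoration functor, and hence is strictly functorial. It therefore remains to establish the corresponding equations on morphism functors, $V_\infty^{G'G}=V_\infty^{G'}V_\infty^G$ and $V_\infty^{id_\mathcal{B}}=id_{V_\infty^\mathcal{B}}$.

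For this I would use the fact that the morphism functor $V_\infty^G$ is recovered as the colimit $\varinjlim_k V_k^G$ in \textbf{Cat} of its restrictions to the free vertical filtration, exactly as the morphism functor $T_1$ of any double functor $T$ out of $Q_\mathcal{B}$ was seen to equal $\varinjlim_k \tilde V_k^T$ in the uniqueness argument for Theorem \ref{thmprojection1}. Because the free double functors respect the filtration level by level, each composition $V_k^{G'}V_k^G$ is defined, and Lemma \ref{functorialitylemma} supplies the equalities $V_k^{G'G}=V_k^{G'}V_k^G$ and $V_k^{id_\mathcal{B}}=id_{V_k^\mathcal{B}}$ for every positive integer $k$. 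Restricting the two functors $V_\infty^{G'G}$ and $V_\infty^{G'}V_\infty^G$ to each piece $V_k^\mathcal{B}$ and applying the lemma shows that they agree on every $V_k^\mathcal{B}$; since $V_\infty^\mathcal{B}=\varinjlim_k V_k^\mathcal{B}$ is the union of these pieces, the two functors coincide, and the identity equation follows in the same way.

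The genuinely substantive work has already been carried out in Lemma \ref{functorialitylemma}, so what remains is essentially bookkeeping. The only point requiring care, and the one I expect to be the main (though minor) obstacle, is the interchange of composition with the directed colimit, i.e. that $\varinjlim_k(V_k^{G'}V_k^G)$ equals $(\varinjlim_k V_k^{G'})(\varinjlim_k V_k^G)$. This is legitimate precisely because the free vertical filtrations are nested and the free double functors preserve them, so that a functor on $V_\infty^\mathcal{B}$ is determined by its compatible restrictions to the $V_k^\mathcal{B}$ and composition is computed levelwise. Finally, since the coherence data of each free double functor is inherited verbatim from that of the pseudofunctor defining it, the coherence data of the composite $Q_{G'}Q_G$ agrees with that of $Q_{G'G}$; hence the equality of the underlying pairs upgrades to an equality of double functors, which completes the proof.
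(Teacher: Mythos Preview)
Your proposal is correct and takes essentially the same approach as the paper: the paper states the corollary without a separate proof, treating it as an immediate consequence of Lemma~\ref{functorialitylemma}, and the remarks following the corollary spell out exactly your colimit argument by observing that each $V_k^\bullet$ is a functor and that $V_\infty^\bullet=\varinjlim V_k^\bullet$. Your additional care about object functors, coherence data, and the interchange of composition with the directed colimit is appropriate and in the same spirit.
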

	
	\begin{notation}
		We will write $Q$ for the functor defined in corollary \ref{corfunctorialityfree}. We will call $Q$ the free globularly generated double category functor.
	\end{notation}

	\noindent Let $k$ be a positive integer. If we write $H_k^\bullet$ for the pair formed by the function associating $H_k^\mathcal{B}$ to every decorated bicategory $\mathcal{B}$ and $H_k^G$ to every decorated pseudofunctor $G$ then $H_k^\bullet$ is a functor from \textbf{bCat}$^*$ to \textbf{Set} by Lemma \ref{functorialitylemma}. Similarly the pair $V_k^\bullet$ formed by the function associating $V_k^\mathcal{B}$ to every $\mathcal{B}$ and the functor $V_k^G$ to every decorated pseudofunctor $G$ is a functor from \textbf{bCat}$^*$ to \textbf{Cat}. Further, if we write $H^\bullet_\infty$ for the pair formed by the function associating $H_\infty^\mathcal{B}$ to every $\mathcal{B}$ and $H_\infty^G$ to every decorated bifunctor $G$ then $H_\infty^\bullet$ is a functor from \textbf{bCat}$^\ast$ to \textbf{Set} and if we write $V^\bullet_\infty$ for the pair formed by the function associating $V_\infty^\mathcal{B}$ to every decorated bicategory $\mathcal{B}$ and the function associating $V_\infty^G$ to every $G$ then $V^\bullet_\infty$ is a functor from \textbf{bCat}$^*$ to \textbf{Cat}. Thus defined $H_k^\bullet,H_\infty^\bullet$ relate by the equation $H_\infty^\bullet=\varinjlim V_k^\bullet$ and $V_k^\bullet,V_\infty^\bullet$ are related by the equation $V_\infty^\bullet=\varinjlim V_k^\bullet$.

	\section{Freeness}\label{sFGGCatasafreeobject}

	\noindent In this section we prove that the free globularly generated double category functor $Q$ defined in Section \ref{sFreeDoubleFunctors} is left adjoint to the restriction $H^*\restriction_{\mbox{\textbf{gCat}}}$ of $H^*$ to \textbf{gCat}, i.e. we prove the relation:

	\[Q\dashv H^*\restriction_{\mbox{\textbf{gCat}}}\]
	
	\noindent Further, we prove that $H^*\restriction_{\mbox{\textbf{gCat}}}$ is faithful thus making \textbf{gCat} into a concrete category over \textbf{bCat}$^*$ and $Q$ into a free functor on \textbf{gCat}. We interpret the results of this section by saying that the free globularly generated double category provides universal bases for $\gamma$-fibers with respect to $H^*$.
	
	\

	\noindent \textit{Adjoint relation}

	\

	\noindent We define a counit-unit pair for the adjuntion $Q\dashv H^*\restriction_{\mbox{\textbf{gCat}}}$. We will write $\pi^\bullet$ for the collection of canonical double projections $\pi^C$ with $C$ running over the objects of \textbf{gCat}. We prove the following proposition.

	\begin{prop}\label{naturalproposition}
		$\pi^\bullet$ is a natural transformation.
	\end{prop}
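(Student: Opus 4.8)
The plan is to read $\pi^\bullet$ as the candidate counit: the natural transformation from the composite $Q\circ(H^*\restriction_{\textbf{gCat}})$ to the identity functor of \textbf{gCat} whose component at a globularly generated double category $C$ is the canonical double projection $\pi^C:Q_{H^*C}\to C$ furnished by Theorem \ref{thmprojection1}. Naturality is then the assertion that for every double functor $T:C\to D$ between globularly generated double categories the square with sides $\pi^C$, $T$, $Q_{H^*T}$ and $\pi^D$ commutes, i.e.
\[T\circ\pi^C=\pi^D\circ Q_{H^*T},\]
where $Q_{H^*T}:Q_{H^*C}\to Q_{H^*D}$ is the free double functor associated to the decorated pseudofunctor $H^*T$.

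First I would observe that both $T\circ\pi^C$ and $\pi^D\circ Q_{H^*T}$ are double functors from the free globularly generated double category $Q_{H^*C}$ into the globularly generated double category $D$. This places us exactly in the hypotheses of Proposition \ref{uniquenessprop}: a double functor out of $Q_\mathcal{B}$ into a globularly generated double category is determined by its restriction $\tilde H_1$ to the first horizontal level. Since $H_1$ is generated under horizontal composition by the collection $\mathbb{G}$ of globular and horizontal-identity generators, and both composites preserve horizontal composition, it suffices to show that the two composites agree on $\mathbb{G}$.

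Next I would verify agreement on the two kinds of generators directly. For a formal globular square $\varphi$ arising from a $2$-cell of $H^*C$, the projection $\pi^C$ acts as the identity on generators by its defining property $E_1^{\pi^C}\restriction_{\mathbb{G}}=id_{\mathbb{G}}$, so $T\circ\pi^C$ returns $T\varphi$; going the other way, $Q_{H^*T}$ sends $\varphi$ to the formal globular square $(H^*T)\varphi$ by condition 1 of Lemma \ref{propconstructionfreefunctor}, and $\pi^D$ returns the genuine square $(H^*T)\varphi=T\varphi$ in $D$. For a formal horizontal identity $i_f$ indexed by a vertical arrow $f$, applying $\pi^C$ and then $T$ gives $T(i_f)=i_{T_0 f}$, since $T$ is a double functor and hence preserves horizontal identity squares; on the other hand $Q_{H^*T}$, being itself a double functor realizing $H^*T$, preserves horizontal identities and acts as $T_0=(H^*T)^*$ on vertical arrows, so $Q_{H^*T}(i_f)=i_{T_0 f}$, which $\pi^D$ fixes. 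The two composites therefore coincide on $\mathbb{G}$, hence on $\tilde H_1$, and Proposition \ref{uniquenessprop} upgrades this to equality of their morphism functors. The object functors of both composites equal $T_0$ and their horizontal $1$-cell functors equal the $1$-cell function of $T$, because the object and horizontal parts of every canonical projection are identities; together with the equality of morphism functors this yields the desired equation of double functors.

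The step I expect to carry the only real friction is the reduction itself, namely keeping the several identifications straight: that $(Q_{H^*C})_0=C_0$ and $(H^*T)^*=T_0$, that $\pi^C$ and $\pi^D$ act strictly and as identities on globular and horizontal-identity generators, and that the coherence data of both composites reduce to that of $H^*T$ (equivalently of $T$), so that nothing beyond the square-level comparison is needed. Once these bookkeeping identifications are in place the generator computation is immediate and Proposition \ref{uniquenessprop} does the rest.
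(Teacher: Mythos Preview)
Your proposal is correct. The verification on the generators $\mathbb{G}$ is exactly the computation the paper carries out, and your invocation of Proposition~\ref{uniquenessprop} is legitimate: both composites are double functors $Q_{H^*C}\to D$ with $D$ globularly generated, so agreement on $\tilde H_1$ forces agreement of morphism functors.

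The only difference from the paper is organizational. The paper reproves naturality by a direct induction on the filtration level $k$, showing that the two squares
\[
\begin{array}{ccc}
H_k^{H^*C}&\xrightarrow{\ H_k^{H^*T}\ }&H_k^{H^*C'}\\
\downarrow&&\downarrow\\
\mbox{Hom}_{C_1}&\xrightarrow{\ T\ }&\mbox{Hom}_{C'_1}
\end{array}
\qquad
\begin{array}{ccc}
V_k^{H^*C}&\xrightarrow{\ V_k^{H^*T}\ }&V_k^{H^*C'}\\
\downarrow&&\downarrow\\
C_1&\xrightarrow{\ T\ }&C'_1
\end{array}
\]
commute for every $k$, and then passes to the limit. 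You instead recognize that this induction has already been packaged once and for all in Lemma~\ref{lemmauniqueness} and Proposition~\ref{uniquenessprop}, so it suffices to check the base case on $\mathbb{G}$ and appeal to uniqueness. Your route is shorter and avoids repeating an argument already on record; the paper's explicit induction has the minor advantage of exhibiting the level-by-level compatibility of $\pi^\bullet$ with the filtrations, but this is not needed elsewhere.
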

	
	\begin{proof}
		We wish to prove that $\pi^\bullet$ is a natural transformation from $H^*Q$ to identity $id_{\mbox{\textbf{gCat}}}$. That is, we wish to prove that for every double functor $T:C\to C'$ from a globularly generated double category $C$ to a globularly generated double category $C'$ the following square commutes:

		\begin{center}
			
			\begin{tikzpicture}
				\matrix(m)[matrix of math nodes, row sep=4em, column sep=4em,text height=1.5ex, text depth=0.25ex]
				{Q_{H^*C}&Q_{H^*C'}\\
					C&C'\\};
				\path[->,font=\scriptsize,>=angle 90]
				(m-1-1) edge node[auto] {$Q_{H^*T}$} (m-1-2)
				edge node[left] {$\pi^C$} (m-2-1)
				(m-2-1) edge node[below]	{$T$} (m-2-2)			
				(m-1-2) edge node[right] {$\pi^{C'}$} (m-2-2);
			\end{tikzpicture}

		\end{center}
		
		\noindent Let $C,C'$ be globularly generated double categories. Let $T:C\to C'$ be a double functor. We first prove that for each positive integer $k$ the following two squares commute:
		
		\begin{center}
			
			\begin{tikzpicture}
				\matrix(m)[matrix of math nodes, row sep=4em, column sep=4em,text height=1.5ex, text depth=0.25ex]
				{H_k^{H^*C}&H_k^{H^*C'}&V_k^{H^*C}&V_k^{H^*C'}\\
					\mbox{Hom}_{C_1}&\mbox{Hom}_{C'_1}&C_1&C'_1\\};
				\path[->,font=\scriptsize,>=angle 90]
				(m-1-1) edge node[auto] {$H_k^{H^*T}$} (m-1-2)
				edge node[left] {$H_k^{\pi^C}$} (m-2-1)
				(m-2-1) edge node[below]	{$T$} (m-2-2)			
				(m-1-2) edge node[auto] {$H_k^{\pi^{C'}}$} (m-2-2)
				(m-1-3) edge node[auto] {$V_k^{H^*T}$} (m-1-4)
				edge node[left] {$V_k^{\pi^{C}}$} (m-2-3)
				(m-1-4) edge node[auto]	{$V_k^{\pi^{C'}}$} (m-2-4)			
				(m-2-3) edge node[below] {$T$} (m-2-4);
			\end{tikzpicture}

		\end{center}
		
		\noindent We do this by induction on $k$. We fist prove that square on the left hand side commutes in the case $k=1$. Let $\varphi$ be a square in $\mathbb{G}^\mathcal{B}$. Suppose first that $\varphi$ is a 2-cell in $\mathcal{B}$. In that case $H_1^{H^*T}\varphi=H^*T\varphi$, which is equal to $T\varphi$. Now, $H_1^{\pi^C}\varphi=\varphi$ and thus the lower left corner of the left hand side square above is also equal to $T\varphi$. The square thus commutes in the values $k=1$ and $\varphi$ globular. An equally easy evaluation proves that the square also commutes for the values $k=1$ and $\varphi=i_f$ for any morphism $f$ in $\mathcal{B}^*$. We conclude that diagram on the left had side above commutes when restricted to collection $\mathbb{G}^\mathcal{B}$ in the case in which $k=1$. This together with the fact that $H_1^{H^*T}$ satisfies condition 6 of Proposition \ref{propconstructionfreefunctor} and the fact that $T$ is a double functor, implies that square commutes $H_1^\mathcal{B}$. Now, the square on the right hand side above restricts to the square on the left when restricted to the set of generators $E_1^\mathcal{B}$ of $V_1^\mathcal{B}$. This, together with the fact that all edges involved are functors implies that the square on the right commutes in the value $k=1$.
		
		Let now $k$ be a positive integer such that $k>1$. Suppose that for every positive $m<k$ the squares above commute. We now prove that the squares above commute for $k$. The square on the left hand side commutes when restricted to the collection of morphisms of $V_{k-1}^{H^*C}$ by the induction hypothesis. This, together  with the fact that the upper edge of the square satisfies condition 4 of Proposition \ref{propconstructionfreefunctor} and its left and right edges satisfy condition 5 of Proposition \ref{propconstructionfreefunctor} implies that the full square commutes. Now, the square on the right above is equal to the square on the left when restricted to the set $E_k^\mathcal{B}$ of generators of $V_k^\mathcal{B}$. This, together with the fact that all the edges of the square are functors, implies that the full square commutes on $k$. The result follows from this by taking limits.

	\end{proof}

	\noindent Let $\mathcal{B}$ be a decorated bicategory. In that case $\mathcal{B}$ is a sub-decorated bicategory of $H^*Q_\mathcal{B}$. We denote by $j^\mathcal{B}$ the inclusion of $\mathcal{B}$ in $H^*Q_\mathcal{B}$. We write $j$ for the collection of decorated pseudofunctors $j^\mathcal{B}$ with $\mathcal{B}$ running through the objects of \textbf{bCat}$^*$. As defined above $j$ is clearly a natural transformation from $id_{\mbox{\textbf{bCat}}^*}$ to $H^*Q$.

	\begin{theorem}\label{thmadjoint}
		$Q$ and $H^*\restriction_{\mbox{\textbf{gCat}}}$ satisfy the relation:
		
		\[Q\dashv H^*\restriction_{\mbox{\textbf{gCat}}}\]
		
		\noindent with the pair $(\pi^\bullet,j)$ as counit-unit pair.
	\end{theorem}

	\begin{proof}
		We wish to prove that pair formed by $Q$ and $H^*\restriction_{\mbox{\textbf{gCat}}}$ forms a left adjoint pair with $\pi^\bullet$ and $j$ as counit and unit respectively.
		
		It has already been established that $\pi^\bullet$ is a natural transformation from $QH^*$ to $id_{\textbf{gCat}}$, and that $j$ is a natural transformation from identity endofunctor $id_{\textbf{bCat}^*}$ of \textbf{bCat}$^*$ to $H^*Q$. We thus only need to prove that the pair $(\pi^\bullet,j)$ satisfies the triangle equations for a counit-unit pair. We begin by proving that the following triangle commutes:

		\begin{center}
			
			\begin{tikzpicture}
				\matrix(m)[matrix of math nodes, row sep=4em, column sep=4em,text height=1.5ex, text depth=0.25ex]
				{H^*&H^*QH^*\\
					&H^*\\};
				\path[->,font=\scriptsize,>=angle 90]
				(m-1-1) edge node[auto] {$j H^*$} (m-1-2)
				edge node[below] {$id_{H^*}$} (m-2-2)
				(m-1-2) edge node[auto] {$H^*\pi^\bullet$} (m-2-2);
			\end{tikzpicture}
			
		\end{center}

		\noindent Let $C$ be a globularly generated double category. The decoration and the collection of 1-cells of both $H^*C$ and $H^*Q_{H^*C}$ are equal to $C_0$ and to the collection of horizontal morphisms of $C$ respectively. Moreover, the restriction of $j_{H^*C}$ to both $C_0$ and the collection of horizontal morphisms of $C$ is the identity. The restriction of $j_{H^*C}$ to the collection of 2-cells of $H^*C$ is the inclusion of the collection of globular squares of $C$ to the collection of globular squares of $Q_C$. Now, again the restriction to both the decoration and the collection of horizontal morphisms of $H^*\pi^C$ is equal to the identity in both cases. The restriction of $H^*\pi^C$ to the collection of 2-cells of $H^*Q_{H^*C}$ is equal to the restriction, to the collection of globular squares of $C$, of $\pi^C$, which in turn is equal to the identity. It follows that $H^*\pi^C j_{H^*C}$ is equal to $id_{H^*C}$. We conclude that triangle above commutes.
		
		We now prove that the following triangle is commutative:
		
		\begin{center}
			
			\begin{tikzpicture}
				\matrix(m)[matrix of math nodes, row sep=4em, column sep=4em,text height=1.5ex, text depth=0.25ex]
				{Q&QH^*Q\\
					&Q\\};
				\path[->,font=\scriptsize,>=angle 90]
				(m-1-1) edge node[auto] {$Qj$} (m-1-2)
				edge node[left] {$id_Q$} (m-2-2)
				(m-1-2) edge node[auto] {$\pi^\bullet Q$} (m-2-2);
			\end{tikzpicture}
			
		\end{center}
		
		\noindent Let $\mathcal{B}$ be a decorated bicategory. In this case the restriction to both $C_0$ and to the collection of horizontal morphisms of $C$, of $j_\mathcal{B}$ in $H^*Q_\mathcal{B}$ is equal to the identity. The restriction, to both $C_0$ and to the collection of horizontal morphisms of $C$, now of $\pi_{Q_\mathcal{B}}$, is again equal to the identity. We now prove that the restriction, to $\mathcal{B}$, of $H^*\pi^{Q_\mathcal{B}}Q_{j_\mathcal{B}}$ of composition $\pi^{Q_\mathcal{B}}Q_{j_\mathcal{B}}$ defines a decorated endopseudofunctor of $\mathcal{B}$. It has already been established that the restriction, to both the decoration and the collection of horizontal morphisms of $\mathcal{B}$, of both $Q_{j_\mathcal{B}}$ and $\pi^{Q_\mathcal{B}}$ and thus of $\pi^{Q_\mathcal{B}}Q_{j_\mathcal{B}}$ is equal to the identity. Now, let $\varphi$ be a 2-cell in $\mathcal{B}$. In that case $Q_{j_\mathcal{B}}\varphi$ is equal to $j_\mathcal{B}\varphi$, which is equal to $\mathcal{B}$. Now, $\pi^{Q_\mathcal{B}}\varphi$ is again equal to $\varphi$. We conclude that the restriction to $\mathcal{B}$ of $H^*\pi^{Q_\mathcal{B}}Q_{j_\mathcal{B}}$ defines a decorated endopseudofunctor of $\mathcal{B}$. Moreover, this decorated endopseudofunctor of $\mathcal{B}$ is the identity endopseudofunctor of $\mathcal{B}$. It follows, from this, and from Proposition \ref{naturalproposition} that the composition $\pi^{Q_\mathcal{B}}Q_{j_\mathcal{B}}$ is equal to the identity endopseudofunctor of $Q_\mathcal{B}$. We conclude that triangle above commutes. This concludes the proof.
		
	\end{proof}

	\noindent As explained in the introduction we interpret Theorem \ref{thmadjoint} as a generalization of \cite[Theorem 5.3]{BrownMosa} and as a way to complete the diagram

	\begin{center}
		
		\begin{tikzpicture}
			\matrix(m)[matrix of math nodes, row sep=4em, column sep=3em,text height=1.5ex, text depth=0.25ex]
			{\mbox{\textbf{dCat}}&&\mbox{\textbf{bCat}}^*\\
				&\mbox{\textbf{gCat}}&\\};
			\path[->,font=\scriptsize,>=angle 90]
			(m-1-1) edge node[above] {$H^*$} (m-1-3)
			edge node[left] {$\gamma$} (m-2-2)			
			(m-2-2) edge node[right] {$H^*\restriction_{\mbox{\textbf{gCat}}}$} (m-1-3)
			(m-2-2) edge [ bend left=60] node [black][left]{$i$}(m-1-1)
			
			(m-2-2) edge [white, bend left=35] node [black][fill=white]{$\vdash$}(m-1-1);
		\end{tikzpicture}

	\end{center}

	\noindent to a diagram of the form:

	\begin{center}
		
		\begin{tikzpicture}
			\matrix(m)[matrix of math nodes, row sep=4em, column sep=3em,text height=1.5ex, text depth=0.25ex]
			{\mbox{\textbf{dCat}}&&\mbox{\textbf{bCat}}^*\\
				&\mbox{\textbf{gCat}}&\\};
			\path[->,font=\scriptsize,>=angle 90]
			(m-1-1) edge node[above] {$H^*$} (m-1-3)
			edge node[left] {$\gamma$} (m-2-2)			
			(m-2-2) edge node[right] {$H^*$} (m-1-3)
			
			(m-2-2) edge [ bend left=60] node [black][left]{$i$}(m-1-1)
			
			(m-2-2) edge [white, bend left=35] node [black][fill=white]{$\vdash$}(m-1-1)

			(m-1-3) edge [ bend left=60] node [black][right]{$Q$}(m-2-2)
			
			(m-1-3) edge [white, bend left=35] node [black][fill=white]{$\vdash$}(m-2-2);
		\end{tikzpicture}

	\end{center}

	\noindent Moreover, if we write \textbf{bCat}$^*_{sat}$ for the full subcategory of \textbf{bCat}$^*$ generated by saturated bicategories and we write \textbf{gCat}$^{free}$ for the full subcategory of \textbf{gCat} generated by the image of the object function of $Q$, then Theorem \ref{thmadjoint} and \cite[Corollary 3.4]{yo2} say that $H^*$ and $Q$ establish an equivalence between \textbf{bCat}$^*_{sat}$ y \textbf{gCat}$^{free}$. That is, we obtain the diagram:

	\begin{center}
		
		\begin{tikzpicture}
			\matrix(m)[matrix of math nodes, row sep=4em, column sep=6em,text height=1.5ex, text depth=0.25ex]
			{\mbox{\textbf{gCat}}^{free}&\mbox{\textbf{bCat}}^*_{sat}\\};
			\path[->,font=\scriptsize,>=angle 90]
			(m-1-1) edge [ bend right=55]node [black,below]{$H^*$} (m-1-2)
			
			(m-1-2) edge [bend right=55] node [black,above]{Q} (m-1-1)
			(m-1-1) edge [white]node[black,fill=white]{$\cong$} (m-1-2)

			;
		\end{tikzpicture}
	\end{center}

	\

	\noindent \textit{Faithfulness of decorated horizontalization}

	\

	\noindent We now prove that the decorated horizontalization functor $H^*$ is faithful when restricted to the category \textbf{gCat} of globularly generated double categories thus allowing an interpretation of \textbf{gCat} as a concrete category over \textbf{bCat}$^*$ and of $Q$ as a free construction. We begin with the following proposition.

	\begin{lem}\label{preliminaryfaithfulness}
		Let $C,C'$ be globularly generated double categories. Let $G,G':C\to C'$ be double functors. Suppose that the equation $H^*G=H^*G'$ holds. In that case the equations

		\[H^k_G=H^k_{G'} \ \mbox{and} \ V^k_G=V^k_{G'}\]
		
		\noindent hold for every $k\geq 1$.
	\end{lem}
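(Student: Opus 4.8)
The plan is to run an induction on the filtration level $k$ that mirrors the uniqueness argument of Lemma \ref{lemmauniqueness}, exploiting the fact that the hypothesis $H^*G=H^*G'$ already forces $G$ and $G'$ to agree on all the generators of the globularly generated structure of $C$. First I would unpack the hypothesis: the equation $H^*G=H^*G'$ says that $G$ and $G'$ induce the same decorated pseudofunctor on $H^*C=(C_0,HC)$, so they agree on objects, on vertical morphisms, on horizontal morphisms, and on the globular squares of $C$ (the $2$-cells of $HC$). Since $G$ and $G'$ are double functors, each sends a horizontal identity square $i_f$ to $i_{Gf}$, resp. $i_{G'f}$; agreement on horizontal morphisms then gives $Gi_f=G'i_f$. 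Thus $G$ and $G'$ coincide on the full generating collection $\mathbb{G}$ of $C$.

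For the base case $k=1$, I would argue that $H^1_C$ consists of horizontal composites of squares in $\mathbb{G}$ and that $G,G'$ preserve horizontal composition, so agreement on $\mathbb{G}$ propagates to $H^1_G=H^1_{G'}$; likewise $V^1_C$ consists of vertical composites of squares in $\mathbb{G}$, and preservation of vertical composition yields $V^1_G=V^1_{G'}$. Here I would note that the weak associativity and unitality of horizontal composition is witnessed by associator and unitor squares, which are themselves globular and hence treated identically by $G$ and $G'$, so the weak structure of $H^1_C$ presents no obstruction.

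For the inductive step at level $k>1$, assuming $H^m_G=H^m_{G'}$ and $V^m_G=V^m_{G'}$ for all $m<k$, I would respect the dependency chain $H^{k-1}_C\to V^k_C\to H^k_C$ dictated by the definition of the filtration. First I would establish $V^k_G=V^k_{G'}$: the category $V^k_C$ is generated by vertical compositions of squares in $H^{k-1}_C$, on which $G$ and $G'$ agree by the inductive hypothesis, and preservation of vertical composition extends agreement to all of $V^k_C$. Then I would establish $H^k_G=H^k_{G'}$: every square of $H^k_C$ is a horizontal composite of squares in $V^k_C$, on which $G$ and $G'$ now agree, and preservation of horizontal composition extends this to $H^k_C$. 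This closes the induction.

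Rather than a genuine obstacle, the main thing to be careful about is the order of the two sub-claims inside the inductive step and the observation that at each stage one needs only that a double functor preserves the relevant composition together with the coherence squares, all of which are already pinned down by $H^*G=H^*G'$. I would flag that the lemma is stated only at each finite level $k$ precisely so that it can feed the faithfulness statement: since $C_1=\varinjlim V^k_C$, the colimit step concluding $G=G'$ is deferred to that application.
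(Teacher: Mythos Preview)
Your proposal is correct and follows essentially the same inductive strategy as the paper's own proof: unpack $H^*G=H^*G'$ to obtain agreement on the generators $\mathbb{G}$, then climb the vertical filtration using that each stage is generated (under the appropriate composition) by the previous one and that double functors preserve both compositions. The only cosmetic difference is the order in which you handle the two sub-claims at each level: you argue $V^k$ from $H^{k-1}$ and then $H^k$ from $V^k$, whereas the paper argues $H^k$ from $V^{k-1}$ and then $V^k$ from $H^k$; both orderings are valid given how the filtration is set up, and neither changes the substance of the argument.
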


	\begin{proof}
		Let $C,C'$ be globularly generated double categories. Let $G,G':C\to C'$ be double functors. Suppose that the equation $H^*G=H^*G'$ holds. Let $k$ be a positive integer. We wish to prove the equations $H_k^G=H_k^{G'}$ and $V_k^G=V_k^{G'}$ hold.
		
		We proceed by induction on $k$. We first prove the equation $H_1^G=H_1^{G'}$. Let $\varphi$ be a globular square in $C$. In that case $H_1^G=G\varphi$. This is equal, given that $\varphi$ is a globular square, to $H^*G\varphi$, which by the assumption of the lemma, is equal to $H^*G'\varphi$, and this is equal to $H_1^{G'}\varphi$. This, together with the fact that both $G$ and $G'$ are double functors and thus preserve horizontal identities implies that the functions $H_1^G$ and $H_1^{G'}$ are equal. We now prove the equation $V_1^G=V_1^{G'}$. Observe first that the restriction of the morphism function of $V_1^G$ to $H_1^C$ is equal to $H_1^G$ and that the restriction of the morphism function of $V_1^{G'}$ to $H_1^C$ is equal to $H_1^{G'}$. This, the previous argument, the fact that $H_1^C$ generates $V_1^C$, and the functoriality of $V_1^G$ and $V_1^{G'}$, implies that the equation $V_1^G=V_1^{G'}$ holds.
		
		Let now $k$ be a positive integer such that $k>1$. Suppose that for every $n<k$ the equations $H_n^G=H_n^{G'}$ and $V_n^G=V_n^{G'}$ hold. We prove that under these assumptions the equation $H_k^G=H_k^{G'}$ holds. Observe first that the restriction of $H_k^G$ to Hom$_{V_{k-1}^C}$ is equal to the morphism function of $V_{k-1}^G$ and that the restriction of $H_k^{G'}$ to Hom$_{V_{k-1}^C}$ is equal to the morphism function of $V_{k-1}^{G'}$. This, together with the induction hypothesis, and the fact that $G$ and $G'$ intertwine the source and target functors and the horizontal composition funtor of $C$ and $C'$ implies that $H_k^G=H_k^{G'}$. We now prove that $V_k^G$ and $V_k^{G'}$ are equal. Observe that the restriction of the morphism function of $V_k^G$ to $H_k^C$ is equal to $H_k^G$ and that the restriction of the morphism function of $V_k^{G'}$ to $H_k^C$ is equal to the function $H_k^{G'}$. This, together with the previous argument, the fact that $H_k^C$ generates $V_k^C$, and the functoriality of both $V_k^G$ and $V_k^{G'}$ proves that the functors $V_k^G$ and $V_k^{G'}$ are equal. This concludes the proof.
	\end{proof}

	\begin{cor}\label{corollaryfaithful1}
		Let $C,C'$ be double categories. Suppose $C$ is globularly generated. Let $G,G':C\to C'$ be double functors. If the equation $H^*G=H^*G'$ holds then the equation $G=G'$ also holds. 
	\end{cor}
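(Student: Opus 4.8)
The plan is to reduce the statement to Lemma \ref{preliminaryfaithfulness}, which already establishes the conclusion when the codomain is itself globularly generated; the only gap to bridge is that here $C'$ is an arbitrary double category. I would close this gap by observing that a double functor out of a globularly generated double category automatically factors through the globularly generated piece of its codomain.

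First I would record this factorization. Since both $G$ and $G'$ are double functors they preserve horizontal identities and vertical and horizontal compositions, and hence send globular squares of $C$ to globular squares of $C'$ and horizontal identity squares $i_f$ to horizontal identity squares. As $C$ is globularly generated, every square of $C$ is a horizontal and vertical composite of squares of these two forms, so the images $G(C)$ and $G'(C)$ are contained in $\gamma C'$. Thus $G$ and $G'$ corestrict to double functors $C\to\gamma C'$ between globularly generated double categories. Moreover, the equation $H^*\gamma C'=H^*C'$ holds, so the hypothesis $H^*G=H^*G'$ is unchanged under this corestriction, and Lemma \ref{preliminaryfaithfulness} applies verbatim to yield the equations $V^k_G=V^k_{G'}$ for every $k\geq 1$.

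It then remains to assemble these equalities into the single equation $G=G'$. On objects and on vertical arrows $G$ and $G'$ agree because $H^*$ records precisely the object category $C_0$, and $H^*G=H^*G'$ by hypothesis. The coherence data of $G$ and $G'$ live on the horizontal bicategory and are likewise recorded by $H^*$, so they coincide as well. For the morphism functors I would invoke the filtration identity $C_1=\varinjlim V^k_C$: since the restrictions $V^k_G$ and $V^k_{G'}$ agree for all $k$, passing to the colimit in \textbf{Cat} gives $G_1=\varinjlim V^k_G=\varinjlim V^k_{G'}=G'_1$. Combining the object, coherence, and morphism data yields $G=G'$.

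The main obstacle is the first step, the factorization through $\gamma C'$; everything afterwards is a direct appeal to Lemma \ref{preliminaryfaithfulness} and a colimit. The one point that deserves care is verifying that $G$ and $G'$ really do land in $\gamma C'$ rather than merely in some sub-double category of $C'$ with the same decorated horizontalization, which is exactly what the generation of $C$ by globular and horizontal identity squares guarantees.
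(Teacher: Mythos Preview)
Your proposal is correct and follows essentially the same route as the paper: factor $G$ and $G'$ through $\gamma C'$ (the paper phrases this as passing to $\gamma G,\gamma G'$, which for globularly generated $C$ is precisely your corestriction), invoke Lemma~\ref{preliminaryfaithfulness} to obtain $V_k^G=V_k^{G'}$ for all $k$, and conclude by passing to the colimit $C_1=\varinjlim V_k^C$. Your write-up is slightly more explicit about why the factorization through $\gamma C'$ exists and about the object and coherence data, but the argument is the same.
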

	
	\begin{proof}
		Let $C,C'$ be double categories. Let $G,G':C\to C'$ be double functors. Suppose that $C$ is globularly generated and that the equation $H^*G=H^*G'$ holds. We wish to prove that the equation $G=G'$ holds.
		
		The globular pieces $\gamma G$ and $\gamma G'$  of $G$ and $G'$ are both  double functors from $C$ to $\gamma C'$. We have the equalities $H^*\gamma G=H^*G$ and $H^*\gamma G'=H^*G'$. It follows, from the assumption of the corollary that $\gamma G$ and $\gamma G'$ satisfy the assumptions of Proposition \ref{preliminaryfaithfulness} and thus the equation $V_k^{\gamma G}=V_k^{\gamma G'}$ are equal for everu $k$. Both $\gamma G$ and $\gamma G'$ admit decompositions as limits $\varinjlim V_k^{\gamma G}$ and $\varinjlim V_k^{\gamma G'}$. It follows, from this, that $\gamma G=\gamma G'$. Finally by the assumption that $C$ is globularly generated $\gamma G$ and $\gamma G'$ are equal to the codomain restrictions, from $C'$ to $\gamma C'$, of $G$ and $ G'$ respectively and thus $\gamma G$ and $\gamma G'$ are equal if and only if $G$ and $G'$ are equal. This concludes the proof. 
	\end{proof}

	\begin{prop}\label{Q faithful}
		$H^*\restriction_{\mbox{\textbf{gCat}}}$ is faithful.
	\end{prop}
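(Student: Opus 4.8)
The plan is to reduce the statement directly to Corollary \ref{corollaryfaithful1}, which already contains all of the substantive content. First I would unfold the definition of faithfulness for the functor $H^*\restriction_{\mbox{\textbf{gCat}}}$: a functor is faithful precisely when, for every pair of objects $C,C'$ and every pair of parallel morphisms $G,G':C\to C'$, equality of the images under the functor forces equality of the morphisms themselves. Since the objects of \textbf{gCat} are exactly the globularly generated double categories and its morphisms are double functors between them, faithfulness of $H^*\restriction_{\mbox{\textbf{gCat}}}$ amounts to the following assertion: for any two globularly generated double categories $C,C'$ and any two double functors $G,G':C\to C'$, the equation $H^*G=H^*G'$ implies $G=G'$.

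This assertion is an instance of Corollary \ref{corollaryfaithful1}. Indeed, that corollary requires only that the domain $C$ be globularly generated, placing no restriction on the codomain $C'$; as an object of \textbf{gCat}, our $C$ satisfies this hypothesis. Applying the corollary to the pair $G,G'$ under the assumption $H^*G=H^*G'$ yields $G=G'$ immediately, which completes the argument.

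Since the reduction is immediate, there is no genuine obstacle at this final stage: the entire weight of the proposition rests on the preceding results. The real work was carried out in Lemma \ref{preliminaryfaithfulness}, where an induction on the vertical filtration index $k$ shows that the hypothesis $H^*G=H^*G'$ propagates from globular squares---on which the action of a double functor coincides with that of its decorated horizontalization---up through every stage $V^k_C$ of the filtration, using that double functors preserve horizontal identities, horizontal composition, and the source and target functors. Corollary \ref{corollaryfaithful1} then assembles these stagewise equalities, via the presentation $C_1=\varinjlim V^k_C$ and a passage to globular pieces, into the global equality $G=G'$. The only point worth verifying carefully is that this chain of results applies verbatim when both $C$ and $C'$ lie in \textbf{gCat}, which holds because the globularly generated hypothesis on the domain alone is what is used.
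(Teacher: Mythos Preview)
Your proposal is correct and matches the paper's approach: the paper states the proposition without proof, treating it as an immediate consequence of Corollary \ref{corollaryfaithful1}, and your reduction to that corollary is exactly the intended argument.
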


	\noindent Proposition \ref{Q faithful} allows us to interpret \textbf{gCat} as a concrete category over \textbf{bCat}$^*$ through $H^*\restriction_{\mbox{\textbf{gCat}}}$. From this and from Theorem \ref{thmadjoint} we have the following corollary.

	\begin{cor}\label{Qfree}
		$Q$ is a free functor with respect to $H^*\restriction_{\mbox{\textbf{gCat}}}$.
	\end{cor}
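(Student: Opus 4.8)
\noindent The plan is to recognize that the statement is a direct consequence of the two preceding results together with the definition of a free functor over a concrete category, so that essentially no new argument is required. Recall that a concrete category over $\mbox{\textbf{bCat}}^*$ is a pair $(\mathbf{A},U)$ with $U:\mathbf{A}\to\mbox{\textbf{bCat}}^*$ faithful, and that a \emph{free functor} with respect to such a $U$ is precisely a left adjoint to $U$; the associated free objects are recovered as the values of this left adjoint, with the components of the unit serving as the universal arrows.

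\noindent First I would invoke Proposition \ref{Q faithful} to conclude that $H^*\restriction_{\mbox{\textbf{gCat}}}$ is faithful, so that the pair $(\mbox{\textbf{gCat}},H^*\restriction_{\mbox{\textbf{gCat}}})$ is a concrete category over $\mbox{\textbf{bCat}}^*$. Next I would invoke Theorem \ref{thmadjoint}, which establishes the adjunction $Q\dashv H^*\restriction_{\mbox{\textbf{gCat}}}$ with counit $\pi^\bullet$ and unit $j$. Combining these two facts, $Q$ is a left adjoint to the faithful functor $H^*\restriction_{\mbox{\textbf{gCat}}}$, which is exactly the assertion that $Q$ is a free functor with respect to $H^*\restriction_{\mbox{\textbf{gCat}}}$.

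\noindent To make the free-object content explicit, I would observe that for each decorated bicategory $\mathcal{B}$ the component $j^\mathcal{B}:\mathcal{B}\to H^*Q_\mathcal{B}$ of the unit is the universal arrow exhibiting $Q_\mathcal{B}$ as the free globularly generated double category on $\mathcal{B}$. Concretely, given any globularly generated double category $C$ and any decorated pseudofunctor $\mathcal{B}\to H^*C$, the hom-set bijection of the adjunction supplies a unique double functor $Q_\mathcal{B}\to C$ through which the given pseudofunctor factors along $j^\mathcal{B}$; this is precisely the universal property defining a free construction, and faithfulness of $H^*\restriction_{\mbox{\textbf{gCat}}}$ guarantees that the factorization is uniquely determined by its decorated horizontalization.

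\noindent The only point requiring care, and hence the sole (mild) obstacle, is to match the categorical definition of freeness over a concrete category to the left-adjoint formulation. Since both the faithfulness and the adjunction have already been secured, these two formulations coincide, and the unit $j$ exhibits the universal arrows explicitly; the corollary therefore follows by unwinding definitions.
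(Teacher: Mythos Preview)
Your proposal is correct and matches the paper's own approach exactly: the paper states the corollary as an immediate consequence of Proposition~\ref{Q faithful} (faithfulness of $H^*\restriction_{\mbox{\textbf{gCat}}}$) and Theorem~\ref{thmadjoint} (the adjunction $Q\dashv H^*\restriction_{\mbox{\textbf{gCat}}}$), without further argument. Your additional unpacking of the universal property via the unit $j$ is accurate but goes slightly beyond what the paper records.
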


	\noindent We interpret Corollary \ref{Qfree} by saying that the free globularly generated double category construction provides universal bases of fibers of the globularly generated piece fibration $\gamma$ and thus provides generators for globularly generated solutions to Problem \ref{prob}.
	
	\
	
	\noindent \textbf{Acknowledgements:} The author would like to thank the anonymous referee, whose comments and suggestions have greatly improved the paper. The author would also like to thank Robert Paré for his support, encouragement and interest in this project.


	
	\vspace{5mm}
	\noindent
	Juan Orendain \\
	Centro de Ciencias Matemáticas \\
	National University of Mexico \\
	Antigua Carretera a Pátzcuaro 8701 \\
	Residencial San José de la Huerta, 58089 Morelia, Mich. (Mexico) \\
	jorendain@matmor.unam.mx
	
	

\begin{thebibliography}{99}
		
		
		
		\bibitem{Bartels1}[A. Bartels, C. Douglas, A. Henriques] Dualizability
		and index of subfactors. Quantum topology, 5:289–345, 2014.
		
		\bibitem{Bartels2}[A. Bartels, C. Douglas, A. Henriques] Conformal
		nets i: Coordinate free nets. Int. Math. Res. Not, 13:4975–5052, 2015.
		
		
		\bibitem{Bartels4}[A. Bartels, C. Douglas, A. Henriques] Conformal
		nets iv: The 3-category. Algebraic and Geometric Topology, 18:897–
		956, 2018.
		
		
		
		
		
		
		\bibitem{BrownMosa}[R. Brown, G. Mosa] Double categories, 2-categories,
		thin structures and connections. Theory Appl. Categ., 5(7):163–1757, 1999.
		
		
		\bibitem{BrownSpencer}[R. Brown, C. B. Spencer] Double groupoids and
		crossed modules. Cahiers Topologie Geom. Differentielle, 17(4):343–
		362, 1976.
		
		
		\bibitem{BrownSpencer2}[R. Brown, C. B. Spencer] G-groupoids, crossed modules and the fundamental groupoid of a topological group. Nederl. Akad. Wetensch. Proc. Ser. A (Indag. Math.), 38(4):296–302, 1976.
		
		
		\bibitem{BrownHiggins}[R. Brown, P. J. Higgins] On the algebra of cubes. Journal of Pure and Applied Algebra, 21:233–260, 1981.
		
		
		\bibitem{BrownHardieKampsPorter}[R. Brown, K. A. Hardie, K. H. Kamps, T. Porter] A homotopy double
		groupoid of a hausdorff space. Theory and Applications of Categories,
		10(2):71–93, 2002.
		
		
		
		\bibitem{DawsonPareFree}[R. Dawson, R. Paré] What is a free double category like? Journal of Pure and Applied Algebra, 168(1):19–34, 2002.
		
		
		
		\bibitem{Ehr3}[C. Ehresmann] Categories structurées. iii. quintettes et applications covariantes. Topol. et Geom. Diff. (Sem. C. Ehresmann), 5:21, 1963.
		
		
		\bibitem{Haagerup}[U. Haagerup] The standard form of von neumann algebras. Mathematica Scandinavica, 37:271–283, 1975.
		
		
		
		\bibitem{MartinezCegarra}[Heredia B. A., A. M. MArtínez-Cegarra, J. Remedios]  Double groupoids and homotopy 2-types. Applied Categorical Structures, 20:323–378, 2012.
		
		\bibitem{HardieKampsKieboom}[K. A. Hardie, K. H. Kamps, R. W. Kieboom] 2-groupoid of a hausdorff space. Applied Categorical Structures, 8:209–234, 2000.
		
		
		\bibitem{MoerdijkSvensson}[I. Moerdijk, J. A. Svensson] Algebraic classification of equivariant
		homotopy 2-types. I. J. Pure Appl. Algebra, 89(1-2):187–216, 1993.
		
		
		
		
		
		
		
		
		
		
		
		\bibitem{yo1}[J. Orendain, 2019] Lifting bicategories into double categories: The globularily generated condition, Theory and Applications of Categories, Vol. 34, 2019, No. 4, pp 80-108. 
		
		
		\bibitem{yo2}[J. Orendain, 2019] Free globularly generated double categories I, Theory and Applications of Categories, Vol. 34, 2019, No. 42, pp 1343-1385.
		
		
		
		\bibitem{Palmquist}[P. H. Palmquist] The double category of adjoint squares. PhD thesis, University of Chicago, 1969.
		
		
		
		
		
		
		\bibitem{Spencer}[C. B. Spencer] An abstract setting for homotopy pushouts and pullbacks. Cahiers de Topologie et Géométrie Différentiele Catégoriques, 18:409–430, 1977.
		
		
		
		
		
		
		
		
		
		
		
		
		
		
		
		
		
		
		
		
		
		
		
		\bibitem{Stasheff1}[J. D. Stasheff] Homotopy Associativity of H-Spaces. I. Transactions of the American Mathematical Society Vol. 108, No. 2 (Aug., 1963), pp. 275-292 (18 pages)
		
		
		\bibitem{Stasheff2}[J. D. Stasheff] Homotopy Associativity of H-Spaces. II. Transactions of the American Mathematical Society Vol. 108, No. 2 (Aug., 1963), pp. 293-312 (20 pages)
		
		
		
		
		
		
		
		
		
		
		
		
		
		
		
		
		
		
		
		
		
		
		
		
		
		
		
		
	\end{thebibliography}
\end{document}